\documentclass[reqno,11pt]{amsart}
\usepackage{amsmath,amssymb,latexsym,soul,cite,mathrsfs,accents}
\usepackage[dvipsnames]{xcolor}
\usepackage{color,enumitem,graphicx}
\newif\ifdraft
\draftfalse

\definecolor{JHAcolor}{rgb}{0.85,0.45,0.0}
\definecolor{TFcolor}{rgb}{0.0,0.55,0.0}
\definecolor{PPcolor}{rgb}{0.55,0.0,0.55}
\definecolor{MYcolor}{rgb}{0.0,0.45,0.70}
\ifdraft
  \newcommand{\JHA}[1]{\leavevmode\pdfannot width 10bp height 10bp depth 0bp {/Subtype /Text /Open false /Name /Note /C [0 0 1] /T (JHA) /Contents (#1)}\ignorespaces}
  \newcommand{\JHAblock}[1]{\leavevmode\pdfannot width 10bp height 10bp depth 0bp {/Subtype /Text /Open false /Name /Note /C [0 0 1] /T (JHA) /Contents (#1)}\ignorespaces}
  \newcommand{\TF}[1]{\leavevmode\pdfannot width 10bp height 10bp depth 0bp {/Subtype /Text /Open false /Name /Note /C [1 0 0] /T (TF) /Contents (#1)}\ignorespaces}
  \newcommand{\TFblock}[1]{\leavevmode\pdfannot width 10bp height 10bp depth 0bp {/Subtype /Text /Open false /Name /Note /C [1 0 0] /T (TF) /Contents (#1)}\ignorespaces}

\else
  \newcommand{\JHA}[1]{}
  \newcommand{\JHAblock}[1]{}
  \newcommand{\TF}[1]{}
  \newcommand{\TFblock}[1]{}

\fi
\usepackage[colorlinks=true,urlcolor=blue,
citecolor=red,linkcolor=blue,linktocpage,pdfpagelabels,
bookmarksnumbered,bookmarksopen]{hyperref}
\usepackage{marginnote}
\usepackage[english]{babel}
\usepackage[left=2.6cm,right=2.6cm,top=2.9cm,bottom=2.9cm]{geometry}
\usepackage{tensor}
\newtheorem{theorem}{Theorem}[section]
\newtheorem{lemma}[theorem]{Lemma}

\newtheorem{proposition}[theorem]{Proposition}
\newtheorem{remark}[theorem]{Remark}
\newtheorem{definition}[theorem]{Definition}

\newtheorem{lemmaletter}{Lemma}

\numberwithin{equation}{section}

\newcommand{\innerthmname}{}

\theoremstyle{definition}

\makeatletter
\def\namedlabel#1#2{\begingroup
	#2%
	\def\@currentlabel{#2}%
	\phantomsection\label{#1}\endgroup
}

\def\XXint#1#2#3{{\setbox0=\hbox{$#1{#2#3}{\int}$ }
		\vcenter{\hbox{$#2#3$ }}\kern-.6\wd0}}

\newcommand*\owedge{\mathpalette\@owedge\relax}
\newcommand*\@owedge[1]{%
	\mathbin{%
		\ooalign{%
			$#1\m@th\bigcirc$\cr
			\hidewidth$#1\m@th\wedge$\hidewidth\cr
		}%
	}%
}
\makeatother

\newcommand{\ud}{\mathrm{d}}
\newcommand{\loc}{\mathrm{loc}}

\allowdisplaybreaks[3]

\title[Delaunay solutions to critical Hartree equations]{Delaunay solutions to the fractional Hartree equation with critical growth}
\thanks{This work was partially supported by Funda\c c\~ao de Amparo \`a Pesquisa do Estado de S\~ao Paulo (FAPESP), Conselho Nacional de Desenvolvimento Cient\'ifico e Tecnol\'ogico (CNPq), National Science Foundation of China (NSFC), and Natural Science Foundation of Zhejiang Province (ZJNSF).}

\author[J.H. Andrade]{Jo\~{a}o Henrique Andrade}
\address[J.H. Andrade]{Institute of Mathematics and Statistics,
	University of S\~ao Paulo
	\newline\indent
	05508-090, S\~ao Paulo-SP, Brazil}
\email{\href{mailto:andradejh@ime.usp.br}{andradejh@ime.usp.br}}

\author[T. Feng]{Tao Feng}
\address[T. Feng]{School of Mathematical Sciences,
	Zhejiang Normal University
	\newline\indent
	321004, Jinhua-ZJ, People's Republic of China}
\email{\href{mailto:fengtao@zjnu.edu.cn}{fengtao@zjnu.edu.cn}}

\author[P. Piccione]{Paolo Piccione}
\address[P. Piccione]{ Department of Mathematics, 
    School of Sciences, Great Bay University
    \newline\indent 
    523000, Dongguan-GD, People’s Republic of China
	\newline\indent
	and
	\newline\indent
	School of Mathematical Sciences,
	Zhejiang Normal University
	\newline\indent
	321004, Jinhua-ZJ, People's Republic of China
    \newline\indent
    \phantom{and}
    \newline\indent
    (permanent address) Institute of Mathematics and Statistics,
	University of S\~ao Paulo
	\newline\indent
	05508-090, S\~ao Paulo-SP, Brazil}
\email{\href{mailto:piccione@ime.usp.br}{paolo.piccione@usp.br, paolo.piccione@gbu.edu.cn}}

\author[M. Yang]{Minbo Yang}
\address[M. Yang]{School of Mathematical Sciences,
	Zhejiang Normal University
	\newline\indent
	321004, Jinhua-ZJ, People's Republic of China}
\email{\href{mailto:mbyang@zjnu.edu.cn}{mbyang@zjnu.edu.cn}}

\subjclass[2020]{35R11, 35B09, 35A21, 35B40}
\keywords{Critical Hartree equations, Delaunay solutions, Fractional Laplacian, Concentration--compactness, Nonlocal operators}
\date{}

\begin{document}
	
	\begin{abstract}
We study positive solutions of the critical fractional Hartree equation with a non-removable isolated singularity at the origin. This equation is doubly nonlocal, involving both the fractional Laplacian and a Riesz convolution potential. We first prove that every positive singular solution is radially symmetric about the origin, by combining the Caffarelli--Silvestre extension with the method of moving spheres. We then establish the existence of Delaunay-type periodic singular solutions. After the Emden--Fowler transformation, the Hartree convolution survives as a genuinely nonlocal integral term, so that the resulting periodic equation cannot be reduced to an ordinary differential equation. We construct nonconstant periodic solutions for all sufficiently large periods by minimizing a Rayleigh-type quotient in a periodic fractional Sobolev space.
\end{abstract}
	
	\maketitle

\section{Introduction}

\noindent{\bf Motivation.}
Singular solutions to nonlocal PDEs with critical exponents arise in geometry and dispersive models
\cite{CaffarelliSilvestre2007,FrankLenzmannSilvestre2016,Lions1985a,Lions1985b}. In this paper, we study the fractional Hartree equation
\begin{equation}\label{eq:frac Hartree}\tag{$\mathcal{P}_{n,s,\alpha}$}
	(-\Delta)^s u = \big(\mathcal{R}_\alpha * u^{p_{\ast}}\big)u^{p_{\ast}-1}
	\quad {\rm in} \quad \mathbb{R}^n\setminus\{0\},
\end{equation}
where $\mathcal{R}_\alpha(x):=|x|^{\alpha-n}$ is the Riesz potential. Here $n\geqslant2$, $s\in(0,1)$, $\alpha\in(0,n)$, and the exponent $p_{\ast}= (n+\alpha)/(n-2s)$ is the Hardy--Littlewood--Sobolev critical exponent \cite{MR1544927,MR165337,MR717827}, for which equation \eqref{eq:frac Hartree} is conformally invariant. Hartree equations arise in quantum mechanics, astrophysics, and nonlinear optics, and constitute one of the most important classes of nonlocal elliptic equations.

Equation \eqref{eq:frac Hartree} admits both regular solutions on the whole space and solutions with an isolated singularity at the origin. Singular solutions govern the local behavior near blow-up points and concentration sites, and their classification is directly linked to the compactness of solution spaces. A qualitative theory of \eqref{eq:frac Hartree} therefore requires understanding the symmetry of singular solutions, their asymptotic behavior near the origin, and the existence of periodic Delaunay-type profiles.

A distinctive feature of equation \eqref{eq:frac Hartree} is that it involves two independent nonlocal terms, namely the fractional Laplacian and the Riesz convolution. In fact, by using the properties of the Riesz potential,   the classical Yamabe equation can be formally recovered, up to a multiplicative constant, from the critical Hartree equation \eqref{eq:frac Hartree} by letting $\alpha\to0^+$ or $\alpha\to n^-$.
 For the classical Yamabe equation, the Emden--Fowler transformation reduces the problem to an autonomous ODE; in the fractional case, one obtains a single nonlocal equation instead. The Hartree convolution, however, survives the change of variables, so that the transformed equation retains nonlocal integral nonlinearity. In particular, the ODE phase-plane analysis, integral representations, and Caffarelli--Silvestre extension techniques developed for the Yamabe problem all require substantial modification in order to handle the Hartree interaction.

\medskip

\noindent{\bf Setting and notation.}
The fractional Laplacian $(-\Delta)^{s}$ is defined by
\begin{equation}\label{fraction laplace}
	(-\Delta)^{s}u(x):=\kappa_{n,s} \,\mathrm{P.V.} \int_{\mathbb{R}^{n}}\frac{u(x)-u(y+x)}{|y|^{n+2s}}\,\ud y
\end{equation}
with
\begin{equation*}
	\kappa_{n,s}=\pi^{-\frac{n}{2}}2^{2s}\frac{\Gamma(\frac{n+2s}{2})}{\Gamma(1-s)}s.
\end{equation*}
We consider positive singular solutions $u\in \mathcal{C}^{1,1}_{\loc}(\mathbb{R}^{n}\setminus\{0\}) \cap L_{s}(\mathbb{R}^{n}) \cap L^{1}_{\loc} (\mathbb{R}^{n})$ to equation \eqref{eq:frac Hartree}, where
\begin{equation}\label{Ls}
	L_{s}(\mathbb{R}^{n}):=\left\{u: \int_{\mathbb{R}^{n}}\frac{|u(y)|}{1+|y|^{n+2s}}  \,\ud y <+\infty\right\}.
\end{equation}
To study their qualitative properties, we employ the Caffarelli--Silvestre extension. Let
$X=(x,t)\in \mathbb{R}^{n+1}_{+}:=\mathbb{R}^{n}\times\mathbb{R}_{+}$.
In \S\ref{Extension formulation}, we construct the Caffarelli--Silvestre extension \(U\) of \(u\) by \eqref{U(x,t)} and prove that it satisfies the extension problem
\begin{equation}\label{eq:extension of u}
	\left\{
	\begin{aligned}
		&-\operatorname{div}\!\left(t^{1-2s}\nabla U\right)=0
		&&{\rm in}\ \mathbb{R}^{n+1}_{+},\\
		&\frac{\partial U}{\partial\nu^s}(x,0)=
		\left(\mathcal{R}_\alpha\ast u^{p_{*}}\right)u^{p_{*}-1}
		&&{\rm on}\ \mathbb{R}^{n}\setminus\{0\},
	\end{aligned}
	\right.
\end{equation}
where $\frac{\partial U}{\partial\nu^s}(x,0)=-\lim_{t\to0^{+}} t^{1-2s}\partial_t U(x,t)$ and $u(x)=U(x,0)$. Hence, instead of dealing directly with the nonlocal operator \((-\Delta)^s\), we study the local degenerate elliptic equation \eqref{eq:extension of u} in one higher dimension.

\medskip

\noindent{\bf Main results.}
Our first main result establishes the radial symmetry of positive solutions with non-removable isolated singularities.

\begin{theorem}[Radial Symmetry]\label{symmetric}
	Let $n \geqslant 2$, $s \in (0,1)$, and $\alpha \in (0,n)$. Suppose $u\in \mathcal{C}^{1,1}_{\loc}(\mathbb{R}^{n}\setminus\{0\}) \cap L_{s}(\mathbb{R}^{n}) \cap L^{1}_{\loc} (\mathbb{R}^{n})$ is a positive solution to \eqref{eq:frac Hartree} with a non-removable singularity at the origin. Then $u$ is radially symmetric with respect to the origin.
\end{theorem}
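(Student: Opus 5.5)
The strategy is the method of moving spheres, centered at arbitrary points $x_0\neq 0$, applied to the Caffarelli--Silvestre extension $U$ of \eqref{eq:extension of u}. For $x_0\in\mathbb{R}^n\setminus\{0\}$ and $\lambda>0$, and with $X_0=(x_0,0)$, define the Kelvin transform
\begin{equation*}
U_{x_0,\lambda}(X)=\Big(\frac{\lambda}{|X-X_0|}\Big)^{n-2s}U\Big(X_0+\frac{\lambda^2(X-X_0)}{|X-X_0|^2}\Big),
\end{equation*}
and let $u_{x_0,\lambda}$ be its trace. Conformal invariance at the critical exponent $p_*=(n+\alpha)/(n-2s)$ shows that $U_{x_0,\lambda}$ solves the same extension problem away from $X_0$ and from the reflected singularity $0^{x_0,\lambda}=x_0-\lambda^2x_0/|x_0|^2$. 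The key identity is
\begin{equation*}
(\mathcal{R}_\alpha*u_{x_0,\lambda}^{p_*})(x)=\Big(\frac{\lambda}{|x-x_0|}\Big)^{\alpha+n}\cdot(\mathcal{R}_\alpha*u^{p_*})(x^{x_0,\lambda}),
\end{equation*}
so the nonlinearity transforms with the correct weight $(\lambda/|x-x_0|)^{n+2s}$. Set $W_\lambda=U-U_{x_0,\lambda}$ on $B^+_\lambda(X_0)$, the half ball minus the reflected singular point. Using the $\mathcal{C}^{1,1}_{\loc}$ regularity near $x_0$, we first show that $W_\lambda\geqslant0$ for all small $\lambda$; this follows from a narrow-region argument combined with $\liminf_{|x|\to\infty}|x|^{n-2s}u>0$, which comes from superharmonicity and $u\in L_s$.

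Next we define $\bar\lambda(x_0)=\sup\{\mu\le|x_0|: W_\lambda\geqslant0 \text{ for } 0<\lambda<\mu\}$ and prove that $\bar\lambda(x_0)=|x_0|$ for every $x_0\ne0$. The point is that the singularity at $0$ is non-removable, so $u$ blows up near $0$ (it is not bounded there). If $\bar\lambda<|x_0|$, then $0^{x_0,\bar\lambda}$ lies inside the ball, and $U_{x_0,\bar\lambda}$ is bounded near that point. Hence $W_{\bar\lambda}>0$ near the singular image, and the strong maximum principle and Hopf lemma for the degenerate operator $-\operatorname{div}(t^{1-2s}\nabla\cdot)$ give $W_{\bar\lambda}>0$ with a positive Neumann derivative on the flat boundary. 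A compactness argument, again using narrow-domain estimates near $\partial B_\lambda$ and near $0^{x_0,\lambda}$, then pushes $\lambda$ beyond $\bar\lambda$, which is a contradiction.

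The main obstacle is the comparison of the Hartree terms, since the boundary equation for $w_\lambda=u-u_{x_0,\lambda}$ reads
\begin{equation*}
\partial_{\nu^s}W_\lambda=(\mathcal{R}_\alpha*u^{p_*})u^{p_*-1}-(\mathcal{R}_\alpha*u_{\lambda}^{p_*})u_\lambda^{p_*-1}.
\end{equation*}
Here the convolution difference is a global quantity. We split it as
\begin{equation*}
\big(\mathcal{R}_\alpha*u^{p_*}\big)\big(u^{p_*-1}-u_\lambda^{p_*-1}\big)+\big(\mathcal{R}_\alpha*(u^{p_*}-u_\lambda^{p_*})\big)u_\lambda^{p_*-1}.
\end{equation*}
We reflect the integral over the complement of $B_\lambda(x_0)$ into $B_\lambda(x_0)$ and use the kernel inequality $|x-y|^{\alpha-n}\geqslant(\lambda/|y-x_0|)^{n-\alpha}|x-y^{x_0,\lambda}|^{\alpha-n}$ for $x,y\in B_\lambda(x_0)$. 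This gives
\begin{equation*}
\mathcal{R}_\alpha*(u^{p_*}-u_\lambda^{p_*})(x)=\int_{B_\lambda(x_0)}\Big(|x-y|^{\alpha-n}-\big(\tfrac{\lambda}{|y-x_0|}\big)^{n-\alpha}|x-y^{x_0,\lambda}|^{\alpha-n}\Big)\big(u^{p_*}-u_\lambda^{p_*}\big)(y)\,\ud y.
\end{equation*}
In the contradiction step, the negative part of $w_\lambda$ is controlled by a Hardy--Littlewood--Sobolev estimate on the small set where $w_\lambda<0$. We test the equation with $W_\lambda^-$ and use the trace Sobolev inequality, so the nonlocal term is absorbed when the measure of that set is small.

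Finally, since $\bar\lambda(x_0)=|x_0|$ for all $x_0\neq0$, we have $u_{x_0,\lambda}\leqslant u$ in $B_\lambda(x_0)\setminus\{0^{x_0,\lambda}\}$ for all $\lambda<|x_0|$. Letting $|x_0|\to\infty$ along any direction $e$, with $\lambda=|x_0|-R$, these spheres converge to the hyperplane $\{x\cdot e=R\}$. This yields monotonicity of $u$ in every direction toward the origin, that is, $u(x)\geqslant u(x^R)$ for reflections across any hyperplane not containing $0$ on the appropriate side. Letting $R\to0$ and applying the inequality in both directions $\pm e$ gives symmetry with respect to every hyperplane through $0$, so $u$ is radially symmetric.
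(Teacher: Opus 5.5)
\textbf{The comparison inequality is reversed, and with your sign several steps are false.} Your scheme otherwise matches the paper's: moving spheres for the extension centered at $X_0=(x_0,0)$ with $x_0\neq0$, showing $\bar\lambda(x_0)=|x_0|$, then letting $|x_0|\to\infty$.

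The Kelvin transform is an involution, $(U_{x_0,\lambda})_{x_0,\lambda}=U$. So your claim $W_\lambda=U-U_{x_0,\lambda}\geqslant0$ in $\mathcal{B}^+_\lambda(X_0)$ is equivalent to $U_{x_0,\lambda}\geqslant U$ on $\{|\xi-X_0|\geqslant\lambda\}$. That is the opposite of what the paper proves, and it fails for every small $\lambda$:
\begin{itemize}
\item Fix $0<r<|x_0|$. For $|\xi-X_0|=r$ we have $U_{x_0,\lambda}(\xi)\leqslant(\lambda/r)^{n-2s}\sup_{\overline{\mathcal{B}^+_\lambda(X_0)}}U\to0$, while $U(\xi)\geqslant\min_{|\xi-X_0|=r}U>0$. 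At the Kelvin image $\eta=\xi^{X_0,\lambda}$, which lies inside the ball, $W_\lambda(\eta)=(r/\lambda)^{n-2s}\bigl(U_{x_0,\lambda}(\xi)-U(\xi)\bigr)<0$.
\item The decay lower bound at infinity that you quote, in its extension form, gives $\liminf_{\eta\to X_0}U_{x_0,\lambda}(\eta)\gtrsim\lambda^{-(n-2s)}$. This again forces $W_\lambda<0$ near the center.
\item Near $0^{x_0,\lambda}$ it is $U_{x_0,\lambda}$ that carries the singularity of $U$ at the origin, while $U$ is regular there. Your sentence ``$U_{x_0,\bar\lambda}$ is bounded near that point, hence $W_{\bar\lambda}>0$'' has the roles exchanged.
\item Your limiting inequality is $u(x)\geqslant u(x^R)$ for $x\cdot e>R$, i.e.\ on the half-space \emph{not} containing $0$. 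Take $R=1$ and $x=z+2(1-z\cdot e)e$, the reflection of a small $z\neq0$. This gives $u(z)\leqslant\sup_{1\leqslant|x|\leqslant3}u$, so $u$ would be bounded near $0$. That contradicts the non-removability you rely on.
\end{itemize}

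\textbf{The fix.} Reverse the inequality throughout:
\begin{itemize}
\item Prove $U_{x_0,\lambda}\geqslant U$ in $\mathcal{B}^+_\lambda(X_0)\setminus\{0^{x_0,\lambda}\}$. This is equivalent to the paper's $U_{X,\lambda}\leqslant U$ for $|\xi-X|\geqslant\lambda$, $\xi\neq0$.
\item Test with $(U-U_{x_0,\lambda})^+$.
\item In the limit you then get $u(x)\geqslant u(x^R)$ on the side of the hyperplane that contains the origin. Letting $R\to0^+$ and using $\pm e$ gives symmetry.
\end{itemize}

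\textbf{The contradiction step also uses the wrong mechanism.} ``Non-removable, hence unbounded'' gives at most $\limsup u=\infty$, which does not yield strict positivity at the singular point. The paper obtains $\liminf_{\xi\to0}(U-U_{X,\bar\lambda})(\xi)>0$ from the Jin--Li--Xiong maximum principle (Lemma~\ref{Proposition 3.1 in J}). It applies because $U-U_{X,\bar\lambda}$ is a positive supersolution in a punctured neighbourhood of $0$: its conormal data is nonnegative there, since $K_\lambda>0$ and $u\geqslant u_{x,\bar\lambda}$ outside the ball. In your interior picture this is the statement $\liminf_{\eta\to0^{x_0,\bar\lambda}}(U_{x_0,\bar\lambda}-U)(\eta)>0$. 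In the paper, non-removability enters only to exclude $U\equiv U_{X,\bar\lambda}$.

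\textbf{A smaller slip.} The Hartree identity carries the weight $(\lambda/|x-x_0|)^{n-\alpha}$, not $(\lambda/|x-x_0|)^{n+\alpha}$. Only then does $(n-\alpha)+(n-2s)(p_*-1)=n+2s$ produce the weight you claim.
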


The radial symmetry established in Theorem~\ref{symmetric} reduces the study of isolated singular solutions of \eqref{eq:frac Hartree} to the radial setting. In the classical Yamabe problem and its fractional counterpart, a distinguished family of radial singular solutions is given by the Delaunay profiles, which are central to the analysis of isolated singularities. Motivated by these developments, it is natural to ask whether equation \eqref{eq:frac Hartree} also admits analogous periodic singular solutions.

We establish the existence of Delaunay-type solutions to \eqref{eq:frac Hartree}, namely solutions of the form
\begin{equation}\label{scaling}
	u(r)=r^{-\frac{n-2s}{2}}v(r)  \quad {\rm on}\ \mathbb{R}^{n}\setminus\{0\}.
\end{equation}
After the Emden--Fowler change of variable $r=e^{t}$, we regard the function as depending on $t$ and still denote it by $v(t)$. Under this change of variables, equation \eqref{eq:frac Hartree} can be written as
\begin{equation}\label{eq:cylinder-Hartree}
	\mathcal{L}_{s}v(t)=\left(\int_{-\infty}^{+\infty}\mathcal{K}_{-\frac{\alpha}{2}}(t-\tau)v(\tau)^{p_{\ast}} \ud\tau\right)v(t)^{p_{\ast}-1},
\end{equation}
where $\mathcal{L}_{s}$ is the linear operator defined by
\begin{equation}\label{Lgamma}
	\mathcal{L}_{s} v(t)
	=
	\kappa_{n,s}\,\mathrm{P.V.}
	\int_{-\infty}^{+\infty}
	\bigl(v(t)-v(\tau)\bigr)
	\mathcal{K}_{s}(t-\tau) \ud\tau
	+
	c_{n,s} v(t),
\end{equation}
$c_{n,s}>0$ depends only on $n$ and $s$, and the kernel
$\mathcal{K}_m$, for $-n/2<m<1$, is given explicitly in \eqref{Km}.
In particular, the kernels $\mathcal{K}_{-\frac{\alpha}{2}}$ in \eqref{eq:cylinder-Hartree} and $\mathcal{K}_{s}$ in \eqref{Lgamma} correspond to the choices $m=-\alpha/2$ and $m=s$, respectively.

If we restrict ourselves to $L$-periodic functions, {\it i.e.},
$v(t)=v(t+L)$, then the operator $\mathcal{L}_{s}$ can be
rewritten as
\begin{equation}\label{eq:Lgamma-periodic}
	\mathcal{L}_{s}^{L} v(t)
	=
	\kappa_{n,s}\,\mathrm{P.V.}
	\int_{0}^{L}
	\bigl(v(t)-v(\tau)\bigr)
	\mathcal{K}_{s}^{L}(t-\tau)\,\ud\tau
	+
	c_{n,s} v(t),
\end{equation}
where $\mathcal{K}_{s}^{L}$ is the associated periodic kernel defined in \eqref{Kml} with $m=s$. In this case,
equation \eqref{eq:cylinder-Hartree} becomes
\begin{equation}\label{eq:cylinder-Hartree-periodic}
	\mathcal{L}_{s}^{L} v(t)
	=
	\left(
	\int_{0}^{L}
	\mathcal{K}_{-\frac{\alpha}{2}}^{L}(t-\tau)
	\, v(\tau)^{p_{\ast}}\, \ud\tau
	\right)
	v(t)^{p_{\ast}-1},
\end{equation}
where $\mathcal{K}_{-\frac{\alpha}{2}}^{L}$ denotes the periodic kernel
corresponding to $m=-\alpha/2$ in \eqref{Kml}. For $L$-periodic solutions, problem \eqref{eq:cylinder-Hartree} is equivalent to finding a minimizer of the functional
\begin{equation}
	\mathscr{F}_{L}(v)=\frac{\frac{\kappa_{n,s}}{2}\int_{0}^{L}\int_{0}^{L}\left(v(t)-v(\tau)\right)^{2}\mathcal{K}_{s}^{L}(t-\tau)\,\ud \tau\,\ud t+c_{n, s} \int_{0}^{L}v(t)^{2}\,\ud t}{\left(\int_{0}^{L}\int_{0}^{L} \mathcal{K}_{-\frac{\alpha}{2}}^{L}(t-\tau)v(\tau)^{p_{\ast}} v(t)^{p_{\ast}}\,\ud \tau\,\ud t\right)^{\frac{1}{p_{\ast}}}}.
\end{equation}
A positive minimizer exists (see Lemma~\ref{lem:existence-minimizer} below), and we denote the minimum value of $\mathscr{F}_L$ by $c(L)$. 

We need to introduce the lower critical weight given by 
\begin{equation}\label{eq:alpha-star}
\alpha_{\ast} = \alpha_{\ast}(n,s) := \max\left\{0,\frac{n(1-6s)+8s^2}{2n-1-2s}\right\}.
\end{equation}
With this definition in hand, our second main result is the following.
\begin{theorem}[Existence of Delaunay solutions]\label{thm:main}
	Let $n \geqslant 2$, $s \in (0,1)$, and $\alpha \in (\alpha_{\ast},n)$. There exists $T_0<\infty$ such that for every $T\geqslant T_0$, equation \eqref{eq:frac Hartree}
	admits a positive, $T$-periodic solution of the form
	\[
	u(x)=|x|^{-\frac{n-2s}{2}}v(\ln|x|),
	\qquad v(t+T)=v(t)>0.
	\]
	These solutions are nonconstant Delaunay profiles for all $T\gg1$ sufficiently large.
\end{theorem}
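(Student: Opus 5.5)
The plan is the standard variational argument for Delaunay solutions, adapted to the Hartree quotient. By Lemma~\ref{lem:existence-minimizer}, for every $T>0$ there is a positive minimizer $v_T$ of $\mathscr{F}_T$ on the space of $T$-periodic functions in $H^s_{\mathrm{per}}$. We may take $v_T\geqslant0$, since $\mathscr{F}_T(|v|)\leqslant\mathscr{F}_T(v)$; strict positivity then follows from the strong maximum principle for $\mathcal{L}_s^T$, using that the kernel is positive. The Euler--Lagrange equation, after multiplying by a constant (possible because the quotient is homogeneous of degree zero), is exactly \eqref{eq:cylinder-Hartree-periodic}. Undoing the Emden--Fowler transformation then gives a positive $T$-periodic profile $u(x)=|x|^{-\frac{n-2s}{2}}v_T(\ln|x|)$ solving \eqref{eq:frac Hartree}. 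The only real issue is to show that $v_T$ is \emph{nonconstant} once $T$ is large, and we do this by an energy comparison.

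First, the constant solution. If $v\equiv c$, the Dirichlet part vanishes, and $\int_0^T\mathcal{K}^T_{-\alpha/2}=\int_{\mathbb{R}}\mathcal{K}_{-\alpha/2}=:A_{n,\alpha}$, a constant independent of $T$. Hence
\[
\mathscr{F}_T(c)=\frac{c_{n,s}\,T\,c^2}{(A_{n,\alpha}\,T\,c^{2p_\ast})^{1/p_\ast}}
= c_{n,s}A_{n,\alpha}^{-1/p_\ast}\,T^{1-\frac{1}{p_\ast}},
\]
which grows like $T^{(2s+\alpha-... )}$; precisely, $1-1/p_\ast=\frac{\alpha+2s}{n+\alpha}>0$.

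Second, we compare with a competitor that does not grow with $T$. Take a bubble $w(t)=(\cosh t)^{-\frac{n-2s}{2}}$, which is the Emden--Fowler image of the extremal of the HLS/Sobolev inequality, and periodize it with a cutoff: $w_T(t)=\sum_{k}\eta(t-kT)w(t-kT)$, where $\eta$ cuts off at scale $T/2$. Since the bubble realizes the sharp constant $S_{H}$ of the Hartree problem on $\mathbb{R}^n$, we have $\mathscr{F}_\infty(w)=S_H$. We will show $\mathscr{F}_T(w_T)\leqslant S_H+o(1)$ as $T\to\infty$. This needs tail estimates. The kernels satisfy $\mathcal{K}_s(t)\sim e^{-\frac{n+2s}{2}|t|}$ and $\mathcal{K}_{-\alpha/2}(t)\sim e^{-\frac{n-\alpha}{2}|t|}$ for large $|t|$, and $w$ decays like $e^{-\frac{n-2s}{2}|t|}$. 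The errors produced by the cutoff and by periodization (interactions between different copies in both the numerator and the Hartree denominator) are therefore bounded by exponentials $e^{-\gamma T}$, with $\gamma$ a combination of these rates.

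Comparing the two, $c(T)\leqslant S_H+o(1)$ is strictly smaller than $\mathscr{F}_T(\text{const})\to\infty$ for $T\geqslant T_0$. So the minimizer is not constant, and the same holds for every $T\geqslant T_0$. A cleaner variant, if the tail control is delicate, is a local argument at the constant: compute the second variation of $\mathscr{F}_T$ at the constant in the direction $\cos(2\pi t/T)$. For large $T$ the symbol of $\mathcal{L}_s$ at the low frequency $2\pi/T$ is close to $c_{n,s}$, while the Hartree term contributes $(2p_\ast-1)$ times the linearization. This yields a negative direction, so the constant is not a minimizer.

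The main obstacle is the interaction estimate in the second step, in particular in the Hartree denominator. The slowly decaying kernel $\mathcal{K}_{-\alpha/2}$, with rate $\frac{n-\alpha}{2}$, couples distant copies of $w$. Controlling the cross terms against the main term requires a relation between the decay rates $\frac{n-2s}{2}$, $\frac{n+2s}{2}$ and $\frac{n-\alpha}{2}$, and I expect exactly this to produce the restriction $\alpha>\alpha_\ast$ in \eqref{eq:alpha-star}. Concretely, I would bound each error term by $e^{-\gamma_i T}$ and check that every $\gamma_i$ is positive (or at least that the errors stay below the main term) precisely when $\alpha>\alpha_\ast(n,s)$. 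If that fails, I would fall back on the second-variation argument, where the analogous condition should appear as the sign of the low-frequency Fourier multiplier computed from \eqref{Km}.
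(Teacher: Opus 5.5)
Your proposal is correct and follows essentially the paper's route: the minimizer from Lemma~\ref{lem:existence-minimizer}, the constant's quotient $c_{n,s}\bigl(\int_{\mathbb{R}}\mathcal{K}_{-\frac{\alpha}{2}}\bigr)^{-1/p_\ast}T^{1-1/p_\ast}\to\infty$, and the same cut-off profile $b(t)=(2\cosh t)^{-\frac{n-2s}{2}}$, whose quotient stays bounded. You only need an $O(1)$ bound rather than $S_H+o(1)$, and the Hartree denominator is no obstacle: dropping the nonnegative cross-period terms bounds it below by the single-copy self-interaction. One correction to your commentary: the restriction $\alpha>\alpha_\ast$ comes neither from the interaction estimates nor from the sign of a low-frequency multiplier---both your comparison and your second-variation fallback work for every $\alpha\in(0,n)$---but solely from the De Giorgi $L^\infty$ bound (Proposition~\ref{L infty for s<1/2}), which Lemma~\ref{lem:existence-minimizer} uses to upgrade the minimizer to a smooth classical solution before the Emden--Fowler transformation is undone.
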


For $0<s<\frac{1}{2}$, the lower critical weight $\alpha_\ast\geqslant 0$ in \eqref{eq:alpha-star} arises naturally from the $L^\infty$ regularity theory. In the De Giorgi truncation scheme used to establish boundedness of minimizers (Proposition~\ref{L infty for s<1/2}), the periodic kernel $\mathcal{K}_{-\alpha/2}^L$ must be integrable in $L^{q'}$ for an exponent $q'$ satisfying a chain of constraints dictated by the Sobolev embedding dimension ($1-2s$), the nonlinearity exponent ($p_{\ast}$), and the kernel singularity ($\alpha$). The threshold $\alpha_{\ast}$ is precisely the value below which no admissible $q'$ exists, making the iteration break down. On the other hand, no additional lower bound on $\alpha$ is required for $s\geqslant\frac12$; see Remark~\ref{L infty for 0<s<1}. Since $\alpha_\ast=0$ by definition \eqref{eq:alpha-star} when $s\geqslant\frac12$, the condition $\alpha\in(\alpha_\ast,n)$ is equivalent to the standing assumption $0<\alpha<n$. Accordingly, Theorem~\ref{thm:main} is formulated in the unified form $\alpha\in(\alpha_\ast,n)$.

\medskip

\noindent{\bf Ideas of the proofs.}
We briefly describe the strategy for the proof of our main results.

For the symmetry result (Theorem~\ref{symmetric}), unlike approaches based on integral representations, we investigate the symmetry of solutions through the Caffarelli--Silvestre extension. Although the method of moving planes in integral forms avoids the use of the maximum principle, establishing an equivalent integral representation for the fractional Hartree equation is itself highly nontrivial. In fact, under the relatively weak regularity assumptions ensuring that the fractional Laplacian is well-defined, it is difficult even to guarantee the finiteness of the integrals appearing in the corresponding integral representation. Motivated by this observation, we develop a direct method of moving planes for the extension problem, and we establish the radial symmetry of positive solutions.

Our approach differs from that in \cite{CaffarelliJinSireXiong2014}, which studies the symmetry of positive solutions with non-removable isolated singularities to the fractional Yamabe equation. In their work, the extension function $U\in W^{1,2}_{\loc}(t^{1-2s}, \overline{\mathbb{R}^{n+1}_{+}}\setminus\{0\})$ is assumed to be a weak solution to the extension problem, and the radial symmetry of $U$ with respect to the $x$-variables is established directly. In contrast, we start from the original fractional Hartree equation \eqref{eq:frac Hartree} under the natural regularity assumption $u\in \mathcal{C}^{1,1}_{\loc}(\mathbb{R}^{n}\setminus\{0\}) \cap L_{s}(\mathbb{R}^{n}) \cap L^{1}_{\loc} (\mathbb{R}^{n})$. Based on this assumption, we first establish, in \S\ref{The original fractional Hartree equation}, a key property of the Hartree convolution term that is used in the subsequent moving spheres argument. We then construct the Caffarelli--Silvestre extension $U$ associated with $u$, and prove in \S\ref{Extension formulation} that $U\in W^{1,2}_{\loc}(t^{1-2s},\overline{\mathbb{R}^{n+1}_+}\setminus\{0\})$ and satisfies the corresponding extension problem \eqref{eq:extension of u}. With this preparation, we apply the method of moving spheres to the extension problem to establish the radial symmetry of $U$ with respect to the $x$-variables, which in turn yields the radial symmetry of the trace $u=U(\cdot,0)$. All notation related to the extension problem is introduced in \S\ref{Extension formulation}.

The proof of Theorem~\ref{thm:main} follows the variational strategy introduced by DelaTorre--Del Pino--Gonz\'alez--Wei~\cite{MR3694655} for the fractional Yamabe problem. After the Emden--Fowler transformation, equation \eqref{eq:frac Hartree} is transformed into a one-dimensional nonlocal integro-differential equation involving both the fractional operator $\mathcal L_s$ and the Hartree convolution term, rather than a classical local ordinary differential equation. Consequently, the classical ODE approach is no longer applicable, and we formulate the problem as the minimization of the Rayleigh-type quotient $\mathscr{F}_L$ over the periodic Sobolev space $H_L^s$. By the direct method in the calculus of variations together with the compact embedding $H_L^s \hookrightarrow L^q(0,L)$, we obtain the existence of a minimizer. Compared with the fractional Yamabe problem, the presence of the Hartree convolution gives rise to a nonlocal interaction energy in the Rayleigh-type quotient, making both the variational analysis and the regularity theory substantially more involved. In particular, unlike local nonlinearities, the regularity of the Hartree convolution term does not follow directly from that of the solution. To overcome this difficulty, we combine the extension formulation with a De Giorgi truncation argument to prove that every minimizer is a smooth positive solution to the transformed equation. To the best of our knowledge, this is the first use of De Giorgi truncation techniques for Hartree-type equations; moreover, this approach yields a strictly larger admissible parameter range than the classical Moser iteration method. An energy comparison then shows that the energy of the constant solution grows like $L^{1-1/p_*}$ as $L\to\infty$, whereas a fixed compactly supported test function has energy $\mathcal{O}(1)$. Consequently, the minimizer is necessarily nonconstant for all sufficiently large periods, yielding Delaunay-type solutions.

We remark that the assumption
\begin{equation*}
\alpha_{\ast}<\alpha<n \quad\text{ for }\quad 0<s<1/2
\end{equation*}
is not required for the variational construction of minimizers, but arises solely from the technical requirements of the regularity analysis. We note that $\alpha_{\ast}$ is strictly less than $n - 4s$ (the threshold corresponding to $p_{\ast} \geqslant 2$), so that the De Giorgi approach enlarges the admissible range of $\alpha$ beyond what the Moser iteration method achieves. Moreover, as shown in Remark~\ref{rem:lower-bound-vanishes}, this lower bound becomes nonpositive, and hence the restriction on $\alpha$ disappears, when $s \geqslant \frac{3n - \sqrt{9n^2 - 8n}}{8}$.

\medskip

\noindent{\bf Related results.}
The study of singular solutions to elliptic equations with critical Sobolev exponents has long been a central topic in geometric analysis. Singular solutions characterize the local behavior near singularities, and are central to blow-up analysis, compactness theory, the classification of singularities, and the description of asymptotic profiles. The existence, classification, and asymptotic behavior of such solutions have therefore been studied extensively.

The classical Yamabe equation is the prototype. More precisely, for $n\geqslant 3$, let us consider
\begin{equation}\label{yamabe equation}
	-\Delta u=u^{\frac{n+2}{n-2}} \quad {\rm in} \quad \mathbb{R}^{n} \setminus\{0\}.
\end{equation}
Caffarelli, Gidas and Spruck \cite{Caffarelli-Gidas-Spruck} proved that if the singularity at the origin is removable, then every solution is an Aubin--Talenti bubble. Otherwise, every positive singular solution is radially symmetric. Moreover, by means of the Emden--Fowler transformation and the corresponding autonomous ordinary differential equation, they obtained a complete classification of singular solutions, namely the Fowler (Delaunay-type) solutions.

There are also works on studying similar problems for singular solutions of higher-order conformally invariant Yamabe equations with isolated singularities. In the removable singularity case, the classification of positive entire solutions was established for fourth-order equations by Lin \cite{MR1611691}, generalized to higher-order equations by Wei and Xu \cite{WeiXu1999}, and further extended to conformally invariant integral equations by Chen, Li and Ou \cite{MR2200258} and Li \cite{Li2004}. When $n \geqslant 5$ and the origin is a non-removable singularity, Lin \cite{MR1611691} proved that all singular solutions of the fourth-order equation
\[
(-\Delta)^{2} u=u^{\frac{n+4}{n-4}} \quad {\rm in} \quad \mathbb{R}^{n} \setminus \{0\}
\]
are radially symmetric. Later, Guo, Huang, Wang and Wei \cite{GuoHuangWangWei2020}, and Frank and König \cite{FrankKonig2019} classified all singular solutions by ODE analysis.

Now, let us focus on the following fractional higher-order equation 
\begin{equation}\label{eq:LE_fractional}\tag{$\mathcal{P}_{n,s}$}
  (-\Delta)^{\sigma/2}u=u^{\frac{n+\sigma}{n-\sigma}} \quad {\rm in} \quad \mathbb{R}^{n} \setminus \{0\},  
\end{equation}
where $\sigma \in (0,n)$ is such that $\sigma=m+s$ with $m\in\mathbb{N}_0$ and $s\in(0,1)$.
The symmetry analysis of positive solutions to \eqref{eq:LE_fractional} with non-removable isolated singularities is more difficult due to the lack of a maximum principle and the absence of suitable positivity properties for intermediate derivatives. To overcome these difficulties, Chen, Li and Ou \cite{ChenLiOu2005} reformulated the equation as an equivalent conformally invariant integral equation and proved the radial symmetry of positive singular solutions. In contrast, classifying singular solutions becomes harder. As the order of the operator increases, the associated Emden--Fowler equation becomes a higher-order nonlinear ordinary differential equation, rendering the classical ODE approach used in the fourth-order case ineffective. Jin and Xiong \cite{JinXiong2020} reformulated the equation as an equivalent conformally invariant integral equation
\begin{equation*}
\psi(t)=\int_{-\infty}^{+\infty}
K(t-\tau)\psi(\tau)^{\frac{n+2\sigma}{n-2\sigma}}\,\ud \tau,
\end{equation*}
where
\begin{equation*}
K(t)
=
\frac{1}{2^{\frac{n-2\sigma}{2}}}
\int_{\mathbb{S}^{n-1}}
\frac{\ud \xi}
{\left|\cosh t-\xi\right|^{\frac{n-2\sigma}{2}}}
\quad {\rm and} \quad \psi(t)=|x|^{\frac{n-2\sigma}{2}}u(|x|) \quad {\rm with} \quad  t=\ln|x|,
\end{equation*}
and established the existence of Delaunay-type singular solutions. 

The fractional Yamabe equation is a particular case of \eqref{eq:LE_fractional} with $\sigma=\{\sigma\}=s \in (0,1)$, which is a nonlocal conformally invariant analog of the classical Yamabe equation. For the removable case, by developing the method of moving planes in integral forms, Chen, Li and Ou \cite{MR2200258} classified all positive solutions of the corresponding conformally invariant integral equation. This result was subsequently extended by Li \cite{Li2004} under weaker regularity assumptions. Subsequently, Jin, Li and Xiong \cite{JinLiXiong2014} established a Liouville-type theorem via the Caffarelli--Silvestre extension and the method of moving spheres. In the non-removable singularity case, Caffarelli, Jin, Sire and Xiong \cite{CaffarelliJinSireXiong2014} proved the radial symmetry of positive singular solutions by applying the method of moving planes to the corresponding Caffarelli--Silvestre extension problem. Concerning the existence of Delaunay-type solutions, DelaTorre, del Pino, González and Wei \cite{MR3694655} observed that, after the Emden--Fowler transformation, the fractional Yamabe equation is no longer reduced to a classical autonomous ordinary differential equation. Instead, they worked directly with the transformed nonlocal operator, established a variational framework for the resulting equation, and proved the existence of Delaunay-type singular solutions. Similar questions have also been extensively studied for conformally invariant equations with exponential nonlinearities, including the Liouville equation and higher-order Q-curvature equations. For the interested reader, we refer to \cite{Chen-Li1,Chen-Li2,MR1611691,WeiXu1999,Chang-Yang,Chou-Wan,Yang-Yang,Guo-Liu}, for example.

We now ask whether analogous results hold for the conformally invariant fractional Hartree equation \eqref{eq:frac Hartree}. For  the removable singularities case, it reduces to the classification of positive classical solutions in the whole space. By the method of moving planes in integral forms,  Ma, Shang and Zhang \cite{MR3978520} proved that every positive classical solution in $L_s(\mathbb{R}^{n})\cap \mathcal{C}^{1,1}_{\loc}(\mathbb{R}^n)$ is radially symmetric and monotone decreasing about some point by the direct method of moving planes. For several special choices of the parameters, Dai, Fang and Qin \cite{DaiFangQin2018} and Dai, Huang, Qin, Wang and Fang \cite{DaiHuangQinWangFang2019} established regularity and classification results for positive entire solutions to the critical Hartree equation \eqref{eq:frac Hartree}.

In contrast, the theory of non-removable isolated singularities remains essentially unexplored. To the best of our knowledge, neither the radial symmetry of positive singular solutions nor the existence of Delaunay-type singular solutions of \eqref{eq:frac Hartree} has been established. In this paper, we address these two problems.

When $s=1$ in \eqref{eq:frac Hartree}, both the removable and non-removable isolated singularity cases have been investigated. For the removable singularity case, the radial symmetry of positive entire solutions has been studied in the literature \cite{MR4027015,MR3817173,MR3978520}. For the non-removable singularity case, the radial symmetry of positive solutions was established in \cite{AndradeFengPiccioneYang2025} by the method of moving planes in integral forms under suitable global integrability assumptions. In parallel, similar questions have also been investigated for Hartree-type equations with exponential nonlinearities; see, for example, \cite{Gluck,Guo-Peng,Niu,Feng-Yang-Zhou}.

\medskip

\noindent{\bf Open problems.}
The results of this paper suggest several natural open questions.

A natural question is whether the lower bound $\alpha > \alpha_{\ast}$ appearing in Theorem~\ref{thm:main} for $0 < s < 1/2$ can be removed entirely, so that the existence result holds for all $0 < \alpha < n$ and all $s \in (0,1)$. In this paper, we introduce a De Giorgi truncation approach to the $L^\infty$ regularity estimate for Hartree-type equations. This approach is new in the Hartree setting and already enlarges the admissible parameter range beyond what the classical Moser iteration method yields. However, the integrability condition imposed on the periodic kernel in the De Giorgi iteration still requires a positive lower bound on $\alpha$ when $s$ is small. It remains open whether a refined argument can eliminate this restriction altogether.

Theorem~\ref{symmetric} and Theorem~\ref{thm:main} together establish that radial singular solutions exist for all sufficiently large periods, but a full description of all positive radial singular solutions, analogous to the Caffarelli--Gidas--Spruck classification for the Yamabe equation, remains to be obtained. In a related direction, the variational construction of Theorem~\ref{thm:main} yields nonconstant periodic solutions only for periods $T \geqslant T_0$. Whether nonconstant periodic solutions exist for small periods remains open. This is a genuinely nonlocal phenomenon: in the classical ODE case ($s = 1$, $\alpha = 0$), Fowler solutions exist for all periods, whereas the nonlocal convolution term prevents a direct phase-plane argument. Closely related is the question of uniqueness of periodic solutions for a given period.

Finally, for parameters $(s,\alpha)$ close to the local regime, {\it i.e.} $s\sim 1$ and $\alpha \sim 0$, perturbative methods may extend classification results from the well-understood ODE setting to the nonlocal Hartree equation. We plan to address this problem in a forthcoming work.

\medskip

\noindent{\bf Organization.}
This paper is organized as follows. In \S\ref{Global solutions with an isolated singularity}, we establish the radial symmetry of positive singular solutions to the fractional Hartree equation \eqref{eq:frac Hartree} and prove Theorem~\ref{symmetric}. In \S\ref{sec:prelim}, we reformulate the fractional Hartree equation \eqref{eq:frac Hartree} as a periodic problem via the Emden--Fowler transformation and derive the corresponding extension formulation. In \S\ref{sec:regularity}, we introduce the function spaces required for the variational analysis, prove the compact embedding, and develop the regularity theory for the periodic problem. In \S\ref{sec:minimization}, we prove the existence of minimizers and complete the proof of Theorem~\ref{thm:main}.

\medskip

\noindent{\bf Notation.}
We collect the main notation used throughout the paper.
\smallskip
\begin{itemize}[leftmargin=2em, itemsep=2pt]
\item[--] $n \geqslant 2$ is the spatial dimension;
\item[--] $s \in (0,1)$ is the fractional parameter;
\item[--] $\alpha \in (0,n)$ is the Riesz exponent;
\item[--] $p_{\ast} := (n+\alpha)/(n-2s)$ is the Hardy--Littlewood--Sobolev critical exponent;
\item[--] $\alpha_{\ast} = \alpha_{\ast}(n,s)$ is the lower critical weight defined in \eqref{eq:alpha-star};
\item[--] $\mathcal{R}_\alpha(x) := |x|^{\alpha-n}$ is the Riesz potential;
\item[--] $(-\Delta)^s$ is the fractional Laplacian with normalization constant $\kappa_{n,s}$;
\item[--] $c_{n,s} > 0$ is the zero-order coefficient from the Emden--Fowler transformation;
\item[--] $d_s > 0$ is the Dirichlet-to-Neumann constant in the Caffarelli--Silvestre extension;
\item[--] $L_{s}(\mathbb{R}^{n})$ is the weighted integrability class for $(-\Delta)^s$ in \eqref{Ls};
\item[--] $\mathbb{R}^{n+1}_+$ is the upper half-space where points are denoted $X = (x,t)$;
\item[--] $\mathcal{B}_R^+$ is the upper half-ball of radius $R$; 
\item[--] $B_R$ is the ball of radius $R$ in $\mathbb{R}^n$;
\item[--] $\partial' \mathcal{B}_R^+$ is the flat boundary; 
\item[--] $\partial'' \mathcal{B}_\lambda(X)$ is the spherical boundary;
\item[--] $U$ is the Caffarelli--Silvestre extension of $u$; 
\item[--] $\partial U/\partial \nu^s$ is the conormal derivative;
\item[--] $U_{X,\lambda}$ is the Kelvin transform of $U$ centered at $X$ with radius $\lambda$;
\item[--] $\bar{\lambda}(x)$ is the critical radius for the moving spheres method;
\item[--] $r = e^t$ is the Emden--Fowler change of variables; 
\item[--] $v(t) := r^{(n-2s)/2}\,u(r)$ is the Emden--Fowler transform;
\item[--] $\mathcal{K}_m(t)$ is the kernel in Emden--Fowler coordinates; 
\item[--] $\mathcal{K}_m^L$ is the $L$-periodization of the kernel in Emden--Fowler coordinates;
\item[--] $\mathcal{L}_s$ is the transformed fractional operator acting on the Emden--Fowler variable;
\item[--] $H_L^s$ is the periodic Sobolev space with finite $\mathcal{L}_s$-energy in \S\ref{sec:regularity};
\item[--] $\mathscr{F}_L$ is the Rayleigh quotient; $\mathcal{E}_L$ is the energy functional; 
\item[--] $\mathcal{H}_L$ is the Hartree interaction;
\item[--] $c(L)$ is the ground-state energy; 
\item[--] $v_L$ is the minimizer achieving $c(L)$;
\item[--] $\mathbf{c}_L$ is the constant solution; see \eqref{eq:constant-state};
\item[--] $\mathrm{o}(1)$ denotes a quantity tending to zero in the indicated limit;
\item[--] $\mathcal{O}(\cdot)$ denotes a quantity bounded by a constant times the argument;
\item[--] $\ud x$ is the Lebesgue measure; 
\item[--] $\ud\sigma$ is the surface measure on $\mathbb{S}^{n-1}$.
\end{itemize}

    \section{Global solutions with an isolated singularity}\label{Global solutions with an isolated singularity}

    In this section, we prove the radial symmetry of positive singular solutions to the fractional Hartree equation (Theorem~\ref{symmetric}). We first collect basic properties of positive solutions in the punctured space. We then introduce the Caffarelli--Silvestre extension and prove that the extension satisfies the degenerate elliptic problem \eqref{eq:extension of u}. Finally, we establish the radial symmetry by applying the method of moving spheres to the extension.

    \subsection{The original fractional Hartree equation}\label{The original fractional Hartree equation}
    In this subsection, we first discuss several basic properties of positive solutions to the fractional Hartree equation \eqref{eq:frac Hartree}. 
    
    Throughout this paper, we assume that $u$ is a positive solution to \eqref{eq:frac Hartree} satisfying
    \begin{equation}\label{assumption of u}
        u\in \mathcal{C}^{1,1}_{\loc}(\mathbb R^n\setminus\{0\})\cap L_s(\mathbb R^n) \cap L^{1}_{\loc}(\mathbb{R}^{n}), 
    \end{equation}
    where $L_{s}(\mathbb R^n)$ is defined by \eqref{Ls}. Since $u$ satisfies the regularity assumption in \cite[Proposition~2.4]{Silvestre2007}, the principal value integral in \eqref{fraction laplace} is well-defined. Consequently, $(-\Delta)^s u$ is a continuous function in $\mathbb R^n\setminus\{0\}$.

    Let $K\Subset\mathbb R^n\setminus\{0\} $ be an arbitrary compact set. Since $u\in \mathcal{C}^{1,1}_{\loc}(\mathbb R^n\setminus\{0\})$ is positive, there exist constants $0<c_{K}\leqslant C_{K}<\infty$ such that $c_{K} \leqslant u(x)\leqslant C_K$ for all $x\in K$. Moreover, by the continuity of the fractional Laplacian $(-\Delta)^{s} u$ in $\mathbb R^n\setminus\{0\}$, we have $\sup_{x\in K}|(-\Delta)^s u(x)|<+\infty$. Therefore, by equation \eqref{eq:frac Hartree}, there exists a constant $M_{K}<\infty$ such that
    \begin{equation}\label{boundedness of the convolution term}
        \sup_{x\in K}\big(\mathcal{R}_\alpha * u^{p_{\ast}}\big)(x)\leqslant M_{K}.
    \end{equation}

    \subsection{Extension formulation}\label{Extension formulation}
    To investigate the symmetry properties of solutions to \eqref{eq:frac Hartree}, we employ the Caffarelli--Silvestre extension formulation, which transforms the nonlocal fractional equation into a local degenerate elliptic problem in the upper half-space.

    We first introduce some notation related to the extension problem. We use capital letters to denote points in the upper half-space
    \begin{equation*}
        \mathbb R^{n+1}_+:=\left\{X=(x,t)\in\mathbb R^n\times\mathbb R:t>0\right\}.
    \end{equation*}
    For $R>0$, define $\mathcal{B}_R:=\left\{X\in\mathbb R^{n+1}:|X|<R\right\}$, and $\mathcal B_R^+:=\mathcal B_R\cap\mathbb R^{n+1}_+$. Moreover, let $B_R:=\left\{x\in\mathbb R^n:|x|<R\right\}$, and $\partial'\mathcal B_R^+:=\mathcal B_R\cap\{t=0\}=B_R$. 

   We now consider the Caffarelli--Silvestre extension of $u$ given by 
    \begin{equation}\label{U(x,t)}
        U(x,t) = \int_{\mathbb{R}^{n}} P_t(y-x)\,u(y)\,\ud y,
    \end{equation}
    where
    \begin{equation}
        P_t(z)=p_{n,s}\frac{t^{2s}}
{\bigl(|z|^2+t^2\bigr)^{\frac{n+2s}{2}}} \quad {\rm and} \quad \int_{\mathbb{R}^{n}}P_{t}(z)\,\ud z=1.
    \end{equation}
    The following lemma shows that this extension is well-defined and yields a weak solution to the corresponding extension problem away from the origin.
    \begin{lemma}[Weak solution to the extension problem away from the origin]\label{Weak solution to the extension problem away from the origin}
        Let $n \geqslant 2$, $s \in (0,1)$, and $\alpha \in (0,n)$. Assume that $u \in \mathcal{C}^{1,1}_{\loc}(\mathbb R^n\setminus\{0\})\cap L_s(\mathbb R^n) \cap L^{1}_{\loc}(\mathbb{R}^{n})$ is a positive solution to \eqref{eq:frac Hartree}. Let $U$ be the Caffarelli--Silvestre extension of $u$ defined by \eqref{U(x,t)}. Then $U$ is a positive weak solution to the extension problem \eqref{eq:extension of u}. More precisely, $U\in W^{1,2}_{\loc}(t^{1-2s},\overline{\mathbb R^{n+1}_+}\setminus\{0\})$ and $U$ satisfies the equation \eqref{eq:extension of u} in the distributional sense away from the origin. Here, $U\in W^{1,2}_{loc}\bigl(t^{1-2s},\overline{\mathbb R^{n+1}_+}\setminus\{0\}\bigr)$ means that for every $R>\varepsilon>0$, $U\in W^{1,2}\bigl(t^{1-2s},\mathcal{B}_R^{+}\setminus \overline{\mathcal{B}_\varepsilon^{+}}\bigr)$, that is,
        \begin{equation*}
           \int_{\mathcal{B}_R^{+}\setminus \overline{\mathcal{B}_\varepsilon^{+}}}t^{1-2s}\bigl(|U|^2+|\nabla U|^2\bigr)\,\ud X<\infty . 
        \end{equation*}
    \end{lemma}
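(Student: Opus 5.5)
We will establish, in order, that $U$ is well defined, positive and smooth in the open half-space; that it has locally finite weighted energy away from the origin; and that its conormal derivative equals $(-\Delta)^s u$ at boundary points away from $0$, in the weak sense.

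\emph{Well-definedness and interior regularity.} Fix $(x,t)\in\mathbb R^{n+1}_+$. For $|y|$ large we have $P_t(y-x)\leqslant C_{x,t}(1+|y|^{n+2s})^{-1}$. Hence $u\in L_s(\mathbb R^n)$ makes the tail of \eqref{U(x,t)} finite, and $u\in L^1_{\loc}(\mathbb R^n)$ handles bounded regions, including the neighbourhood of the singularity. This bound is locally uniform in $(x,t)$, and the same holds for all derivatives of $P_t$. Dominated convergence therefore gives $U\in C^\infty(\mathbb R^{n+1}_+)$. Since $P_t$ solves $\operatorname{div}(t^{1-2s}\nabla P_t)=0$, so does $U$, classically. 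Positivity is immediate from $u>0$ and $P_t>0$.

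\emph{Boundary behaviour and energy.} Fix $R>\varepsilon>0$ and cut off $u=u\eta+u(1-\eta)$ with $\eta\in C_c^\infty(\mathbb R^n\setminus\{0\})$, where $\eta\equiv1$ on the annulus $\{\varepsilon/2\leqslant|x|\leqslant 2R\}$ and $\eta$ is supported in $\{\varepsilon/4<|x|<4R\}$. Let $U=U_1+U_2$ be the corresponding splitting.
\begin{itemize}
\item The piece $u\eta$ is $C^{1,1}$ with compact support, so it lies in $H^s(\mathbb R^n)$. The classical Caffarelli--Silvestre theory then gives $U_1\in W^{1,2}(t^{1-2s},\mathbb R^{n+1}_+)$, continuity up to $t=0$, and $-\lim_{t\to0}t^{1-2s}\partial_tU_1=d_s(-\Delta)^s(u\eta)$ pointwise. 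The $C^{1,1}$ regularity is what makes this limit pointwise and locally uniform (cf.\ \cite{Silvestre2007}).
\item The piece $u(1-\eta)$ vanishes on the annulus. For $X$ in $\mathcal B_{R}^+\setminus\overline{\mathcal B_\varepsilon^+}$ with $|x|\in(\varepsilon/2\cdot 2,\,R)$, roughly speaking, the kernel $P_t(y-x)$ and its derivatives are nonsingular on the support of $u(1-\eta)$. The region near $0$ contributes via $L^1_{\loc}$, and the region near infinity via $L_s$. Direct differentiation then yields $|\nabla U_2|\leqslant C$, $|\partial_tU_2|\leqslant Ct^{2s-1}$, and $t^{1-2s}\partial_tU_2(x,t)\to -d_s\,\kappa\!\int u(1-\eta)(y)|x-y|^{-n-2s}\,\ud y$, which is exactly $d_s(-\Delta)^s(u(1-\eta))(x)$ for such $x$.
\end{itemize}
Points of the region with $|x|<\varepsilon$ necessarily have $t$ bounded below. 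There $U$ is smooth with bounded derivatives, again by the $L^1_{\loc}$ and $L_s$ bounds. Adjusting the cutoff annulus then covers the whole set $\mathcal B_R^+\setminus\overline{\mathcal B_\varepsilon^+}$. Summing the weighted estimates (note that $t^{1-2s}\cdot t^{2(2s-1)}=t^{2s-1}$ is integrable) gives $U\in W^{1,2}(t^{1-2s},\mathcal B_R^+\setminus\overline{\mathcal B_\varepsilon^+})$.

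\emph{Weak formulation.} Take a test function $\varphi\in C_c^\infty(\overline{\mathbb R^{n+1}_+}\setminus\{0\})$. Integrate $\operatorname{div}(t^{1-2s}\nabla U)\varphi=0$ over $\{t>\delta\}$ and let $\delta\to0$, using the uniform boundary convergence of $t^{1-2s}\partial_tU$ on $\operatorname{supp}\varphi\cap\{t=0\}$ established above. This gives
\[
\int t^{1-2s}\nabla U\cdot\nabla\varphi\,\ud X=\int_{\mathbb R^n} d_s(-\Delta)^su\,\varphi(x,0)\,\ud x.
\]
By \eqref{eq:frac Hartree} the right-hand side equals the Hartree term (with $d_s$ absorbed into the normalization). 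That term is locally bounded away from $0$ by \eqref{boundedness of the convolution term}, so the boundary integral makes sense.

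The main obstacle is the boundary analysis near $t=0$: one must show that the conormal derivative converges locally uniformly to $(-\Delta)^s u$ while $u$ is only $L^1_{\loc}$ at the origin and merely in $L_s$ at infinity. The cutoff splitting above is the device we will use to handle this.
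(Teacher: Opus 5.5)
Your argument is correct, but it is organized differently from the paper's proof.

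\textbf{How the paper proceeds.} The paper never splits $u$. It bounds $\nabla_x U$ and $\partial_t U$ directly from the explicit formulas for $\nabla P_t$ and $\partial_t P_t$, dividing $\mathcal{B}_R^+\setminus\overline{\mathcal{B}_\varepsilon^+}$ into two regions:
\begin{itemize}
\item on $\{t\geqslant\varepsilon/2\}$ the kernel derivatives are bounded, so $L^1_{\loc}$ and $L_s$ suffice;
\item on $\{t<\varepsilon/2\}$ one has $|x|\geqslant\sqrt3\,\varepsilon/2$, and the cancellations $\int\nabla_zP_t=\int\partial_tP_t=0$ allow subtracting $u(x)$ and using the local Lipschitz bound of $u$ to get $|\partial_tU|\lesssim t^{2s-1}$.
\end{itemize}
The interior equation and the trace are quoted from Chen--Li--Ma. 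The Neumann condition is read off from the pointwise Caffarelli--Silvestre identity.

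\textbf{How your route differs.} You localize, $u=u\eta+u(1-\eta)$. The near piece is $C^{1,1}_c$, so the standard extension theory applies as a black box. The far piece only meets a nonsingular kernel, which shows exactly where $L^1_{\loc}$ (near $0$) and $L_s$ (at infinity) enter.

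\textbf{What each route buys.} The real gain of yours is \emph{locally uniform} convergence of $t^{1-2s}\partial_tU$ on compact subsets of $\mathbb{R}^n\setminus\{0\}$. That is what your integration over $\{t>\delta\}$ needs to turn the pointwise conormal limit into the distributional Neumann condition the lemma claims, a step the paper leaves implicit. The paper's route, in exchange, is self-contained and gives explicit pointwise bounds on $\nabla U$.

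\textbf{Two slips to fix.}
\begin{itemize}
\item Points with $|x|<\varepsilon$ do not have $t$ uniformly bounded below (let $|x|\uparrow\varepsilon$), so you genuinely need the adjusted annulus. For example, take $\eta\equiv1$ on $\{\varepsilon/4\leqslant|x|\leqslant2R\}$, treat $|x|\geqslant\varepsilon/2$ by the splitting, and treat $|x|<\varepsilon/2$ directly, since it forces $t>\sqrt3\,\varepsilon/2$. This is essentially the paper's dichotomy.
\item The sign in the far-field limit is off. For $|z|$ bounded away from $0$, $t^{1-2s}\partial_tP_t(z)\to 2s\,p_{n,s}|z|^{-n-2s}>0$. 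So it is $-t^{1-2s}\partial_tU_2$, not $t^{1-2s}\partial_tU_2$, that converges to $d_s(-\Delta)^s\bigl(u(1-\eta)\bigr)(x)$. This is consistent with your own convention for $U_1$.
\end{itemize}
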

    \begin{proof}
    We divide the proof into several claims.
    
    \noindent{\bf Claim~1:} {\it $U$ is well-defined in $\mathbb{R}^{n+1}_+$.}

    \noindent{\it Proof.}
    Let $(x,t)\in \mathbb R^{n+1}_+$ be fixed. Choose $R:=2(|x|+1)$. We split
    \begin{equation*}
        \int_{|y|\leqslant R}P_t(y-x)u(y)\,\ud y
+
\int_{|y|>R}P_t(y-x)u(y)\,\ud y
=I_{1}+I_{2}.
    \end{equation*}

For the local part $I_1$, since $t>0$ is fixed, the kernel $P_t$ is bounded. Hence, 
\begin{equation*}
    |I_1|
\leqslant
C(t)
\int_{|y|\leqslant R}|u(y)|\,\ud y<\infty,
\end{equation*}
where we used the fact that $u\in L^1_{\loc}(\mathbb R^n)$.

For the tail part $I_2$, if $|y|>R$, then
\begin{equation*}
    |y-x|\geqslant |y|-|x| > \frac{|y|}{2}.
\end{equation*}
Therefore,
\begin{equation*}
    |I_2|
\leqslant
C(t)
\int_{|y|>R}
\frac{|u(y)|}{|y|^{n+2s}}
\, \ud y\leqslant 2C(t)\int_{|y|>R}
\frac{|u(y)|}{1+|y|^{n+2s}} \,\ud y<\infty,
\end{equation*}
where we used $u\in L_s(\mathbb R^n)$.

Combining the above estimates, we obtain $|U(x,t)|<\infty$. Hence $U$ is well-defined in $\mathbb R^{n+1}_+$, and Claim~1 is proved.

\medskip
\noindent{\bf Claim~2:} {\it $U$ solves the extension problem \eqref{eq:extension of u} away from the origin.}

\noindent{\it Proof.} Since $u\in \mathcal{C}^{1,1}_{\loc}(\mathbb R^n \setminus \{0\})\cap L_s(\mathbb R^n)\cap L^{1}_{\loc}(\mathbb{R}^{n})$, for every \(x_0\neq0\), there exists \(r>0\) such that
\[
u\in \mathcal{C}^{1,1}(B_r(x_0))\cap L_s(\mathbb R^n) \cap L^{1}_{\loc}(\mathbb{R}^{n}).
\]
Therefore, by Section 1 in \cite{ChenLiMa}, the Caffarelli--Silvestre extension \(U\) satisfies
\begin{equation*}
    -\operatorname{div}(t^{1-2s}\nabla U)=0
\quad {\rm in}\  \mathbb R^{n+1}_+
\end{equation*}
and
\begin{equation*}
    U(x,t)\to u(x)
\quad {\rm as}\ t\to0^+
\end{equation*}
for every $x\neq0$, and Claim~2 is proved.

\medskip
\noindent{\bf Claim~3:} {\it For every $R > \varepsilon > 0$, $U \in W^{1,2}(t^{1-2s}, \mathcal{B}_R^{+} \setminus \overline{\mathcal{B}_\varepsilon^{+}})$.}

\noindent{\it Proof.} We show that for every $R>\varepsilon>0$, 
        \begin{equation*}
        \int_{\mathcal{B}_R^{+}\setminus \overline{\mathcal{B}_\varepsilon^{+}}}t^{1-2s}\bigl(|U|^2+|\nabla U|^2\bigr)\,\ud X<\infty . 
        \end{equation*}
We only prove the finiteness of the gradient term
\begin{equation*}
    \int_{\mathcal{B}_R^{+}\setminus \overline{\mathcal{B}_\varepsilon^{+}}}
t^{1-2s}|\nabla U|^2\,\ud X,
\end{equation*}
since the estimate for the lower-order term follows from a similar and simpler argument. 

Define $f(x,t,y):=P_{t}(y-x)u(y)$. We have 
\begin{equation*}
    \nabla_xf(x,t,y)=\left(\nabla_{x}P_{t}(y-x)\right)u(y) \quad {\rm and} \quad \partial_{t}f(x,t,y)=\left(\partial_{t}P_{t}(y-x)\right)u(y).
\end{equation*}
By the change of variables $z=y-x$, we have 
\begin{equation}\label{partialx P and partialt P}
    \partial_{x_i}P_t(y-x)=-\partial_{z_{i}}P_{t}(z)
=
p_{n,s}(n+2s)
\frac{t^{2s}z_i}
{\left(|z|^2+t^2\right)^{\frac{n+2s+2}{2}}} \quad {\rm and}   \quad \partial_t P_t(z)
=
p_{n,s}
\frac{
t^{2s-1}\bigl(2s|z|^2-nt^2\bigr)
}
{\left(|z|^2+t^2\right)^{\frac{n+2s+2}{2}}}.
\end{equation}
Let $\Omega:=\mathcal{B}_R^+\setminus\overline{\mathcal{B}_\varepsilon^+}$ and $(x,t) \in \Omega$. Define $\Omega_{1}=\{(x,t) \in \Omega:\varepsilon /2\leqslant t<R\}$ and $\Omega_{2}=\{(x,t) \in \Omega:0<t<\varepsilon/2\}$.

We first show that 
\begin{equation}\label{sobolve energy of nablax U}
        \int_{\mathcal{B}_R^{+}\setminus \overline{\mathcal{B}_\varepsilon^{+}}}t^{1-2s}|\nabla_{x}U|^2\, \ud X<\infty . 
        \end{equation}
By \eqref{partialx P and partialt P}, 
\begin{equation}\label{property of nablax P}
    \left|\nabla_{z}P_{t}(z) \right| \leqslant C\frac{t^{2s}|z|}{\left(|z|^{2}+t^{2}\right)^{\frac{n+2s+2}{2}}} \quad {\rm and}  \int_{B_{r}}\nabla_{z}P_{t}(z)\,\ud z=0,
\end{equation}
where $r \in (0,+\infty ]$. 

In $\Omega_{1}$, since $|t| \geqslant \frac{\varepsilon}{2}$, we get
\begin{equation*}
    \left|\nabla_{z}P_{t}(z) \right| \leqslant \frac{C_{\varepsilon}}{\left(1+|z|\right)^{n+2s+1}}
\end{equation*}
If $|z| \leqslant 2R$, then $|x+z| \leqslant 3R$. Hence, 
\begin{equation*}
    \left|\int_{|z| \leqslant2R} \nabla_{z}P_{t}(z)u(x+z)\,\ud z \right| \leqslant \int_{|z| \leqslant2R} \left| \nabla_{z}P_{t}(z)\right| u(x+z)\,\ud z\leqslant C_{\varepsilon} \|u\|_{L^{1}(B_{3R})}<\infty.
\end{equation*}
If $|z| >2R$, we obtain that
\begin{equation*}
    \left| \nabla_{z}P_{t}(z)\right| \leqslant\frac{C_{\varepsilon}}{\left(1+|z|\right)^{n+2s}} \leqslant\frac{C_{\varepsilon}}{1+|z|^{n+2s}} \leqslant\frac{2^{n+2s}C_{\varepsilon}}{1+|x+z|^{n+2s}}.
\end{equation*}
Consequently, 
\begin{equation*}
  \left|\int_{|z| >2R} \nabla_{z}P_{t}(z)u(x+z)\,\ud z\right|  \leqslant 2^{n+2s}C_{\varepsilon} \int_{|z|>2R}\frac{u(x+z)} {1+|x+z|^{n+2s}}\,\ud z <\infty.
\end{equation*}

In $\Omega_{2}$, we know $|t|\leqslant \frac{\varepsilon}{2}$. Since $|x|^{2}+t^{2} \geqslant \varepsilon^{2}$, we have $|x| \geqslant \delta:=\frac{\sqrt{3}}{2} \varepsilon$. Then $\frac{|x|}{2}\geqslant \frac{\delta}{2}$ for every $(x,t)\in B_R^+\setminus\overline{B_\varepsilon^+}$. If $|z|<\frac{|x|}{2}$, then $ \frac{\delta}{2}\leqslant|x+z|<\frac{3}{2}R$. Since $u \in \mathcal{C}^{1,1}_{\loc}(\mathbb{R}^{n} \setminus \{0\})$, there exists a constant $L>0$, depending only on $R$ and $\varepsilon$, such that $|u(x+z)-u(x)| \leqslant L|z|$. Hence, we obtain that
\begin{align*}
    \left|\int_{|z|<\frac{|x|}{2}}\nabla_{z}P_{t}(z)\left(u(x+z)-u(x)\right)\,\ud z \right|
    &\leqslant C \int_{|z|<\frac{|x|}{2}}\frac{t^{2s}|z|^{2}}{\left(|z|^{2}+t^{2}\right)^{\frac{n+2s+2}{2}}}\,\ud z \\
    &= C\int_{0}^{\frac{|x|}{2}} \frac{t^{2s}r^{n+1}}{\left(r^{2}+t^{2}\right)^{\frac{n+2s+2}{2}}}\,\ud r \\
     &\leqslant C \int_{0}^{\infty} \frac{\tau^{n+1}}{(1+\tau^{2})^{\frac{n+2s+2}{2}}}\,\ud \tau < \infty,
\end{align*}
where the change of variables $r=t\tau$ has been used in the last inequality. Furthermore,
\begin{equation*}
   \left| \int_{|z|<\frac{|x|}{2}}\nabla_{z}P_{t}(z)u(x+z)\,\ud z\right| \leqslant Ct^{-1} \int_{0}^{\infty}\frac{\tau^{n+1}}{(1+\tau^{2})^{\frac{n+2s+2}{2}}}\,\ud \tau .
\end{equation*}
For $\frac{|x|}{2} \leqslant|z| <2R$,
\begin{equation*}
    \left|\int_{\frac{|x|}{2} \leqslant|z|<2R}\nabla_{z}P_{t}(z)u(x+z)\,\ud z  \right|\leqslant C t^{2s}\|u\|_{L^{1} (B_{3R})}.
\end{equation*}
If $ |z| \geqslant 2R$, it follows that
\begin{equation*}
    \left|\int_{|z| \geqslant2R} \nabla_{z}P_{t}(z)u(x+z)\,\ud z\right|  \leqslant C t^{2s} \int_{|z|>3R}\frac{u(z)} {1+|z|^{n+2s}}\,\ud z.
\end{equation*}
Combining the above estimates and changing variables back to $y=x+z$, we conclude that for every fixed $(x,t)\in B_R^+\setminus\overline{B_\varepsilon^+}$, there exists a function $g\in L^1(\mathbb R^n)$ such that
\begin{equation*}
    \bigl|\nabla_xP_t(y-x)u(y)\bigr|
\leqslant g(y)
\quad
{\rm for\ a.e.}\ y\in\mathbb R^n.
\end{equation*}
Hence differentiation under the integral sign is justified. For $(x,t) \in \Omega$, we have
\begin{equation*}
    \nabla_xU(x,t)
=
\int_{\mathbb R^n}
\nabla_xP_t(y-x)u(y)\,\ud y=\int_{\mathbb R^n}
\nabla_xP_t(y-x)\left(u(y)-u(x)\right)\,\ud y,
\end{equation*}
where we have used \eqref{property of nablax P} in the last equality. Consequently, 
\begin{align*}
    \int_{\mathcal{B}_R^{+}\setminus \overline{\mathcal{B}_\varepsilon^{+}}}t^{1-2s}|\nabla_{x}U|^2\, \ud X =&\int_{\Omega_{1}}t^{1-2s}\left| \int_{\mathbb R^n}
\nabla_xP_t(y-x)u(y)\,\ud y\right|^{2}\,\ud X \\ &+ \int_{\Omega_{2}} t^{1-2s} \left| \int_{\mathbb R^n}
\nabla_xP_t(y-x)\left(u(y)-u(x)\right)\,\ud y\right|^{2}\,\ud X\\
    &\leqslant C\int_{\Omega_{1}}t^{1-2s}\,\ud X+C\int_{\Omega_{2}}t^{1-2s} \left(1+t^{4s} \right)\,\ud X <\infty.
\end{align*}
In the above estimate, we omit the case $(x,t)\in\Omega_2$ with
$|y-x|\geqslant |x|/2$ for brevity. A similar argument can be found in the proof of \eqref{sobolve energy of partialt U}.

Next, we prove that
\begin{equation}\label{sobolve energy of partialt U}
        \int_{\mathcal{B}_R^{+}\setminus \overline{\mathcal{B}_\varepsilon^{+}}}t^{1-2s}|\partial_{t}U|^2\,\ud X<\infty . 
        \end{equation}
 By \eqref{partialx P and partialt P}, we get
 \begin{equation*}
     |\partial_tP_t(z)|
\leqslant
C
\frac{t^{2s-1}}
{(|z|^2+t^2)^{\frac{n+2s}{2}}}, \quad {\rm and} \quad
\int_{\mathbb R^n}\partial_tP_t(z)\,\ud z=0.
 \end{equation*}
 Arguing as in the estimate of $\nabla_xU$, we obtain that for every $(x,t) \in \Omega$, 
 \begin{equation*}
     \partial_tU(x,t)
=\int_{\mathbb R^n}
\partial_tP_t(z)
u(x+z)\, \ud z=
\int_{\mathbb R^n}
\partial_tP_t(z)
\bigl(u(x+z)-u(x)\bigr)\,\ud z.
 \end{equation*}
Since the estimate in $\Omega_1$ for \eqref{sobolve energy of partialt U} can be obtained in exactly the same way as the proof of \eqref{sobolve energy of nablax U}, we only treat the case $\Omega_2$.

In $\Omega_{2}$, we know $|t|\leqslant \frac{\varepsilon}{2}$ and $|x| \geqslant \delta=\frac{\sqrt{3}}{2} \varepsilon$. Then $\frac{|x|}{2}\geqslant \frac{\delta}{2}$ for every $(x,t)\in B_R^+\setminus\overline{B_\varepsilon^+}$. Since $\delta \leqslant|x|\leqslant R$ and $u \in \mathcal{C}^{1,1}_{\loc}(\mathbb{R}^{n} \setminus \{0\})$, we have $|u(x)| \leqslant C$. If $|z|<\frac{|x|}{2}$, then $\frac{\delta}{2} \leqslant |x+z|<\frac{3}{2}R$. Since $u \in \mathcal{C}^{1,1}_{\loc}(\mathbb{R}^{n} \setminus \{0\})$, there exists a constant $L>0$, depending only on $R$ and $\varepsilon$, such that $|u(x+z)-u(x)| \leqslant L|z|$. Therefore,
\begin{align*}
    \int_{|z|<\frac{|x|}{2}}
\partial_tP_t(z)
\bigl(u(x+z)-u(x)\bigr)\,\ud z &\leqslant C\int_{|z|<\frac{|x|}{2}}\frac{t^{2s-1}|z|}
{(|z|^2+t^2)^{\frac{n+2s}{2}}}\,\ud z\\
&\leqslant \int_{0}^{\frac{|x|}{2t}} \frac{\tau^{n}}{\left(1+|\tau|^{2}\right)^{\frac{n+2s}{2}}}\,\ud \tau \leqslant Ct^{-(1-2s)}.
\end{align*}
If $\frac{|x|}{2}\leqslant|z|<2R$, then
\begin{align*}
    \int_{\frac{|x|}{2} \leqslant|z|<2R}\partial_{t}P_{t}(z)\bigl(u(x+z)-u(x)\bigr)\,\ud z &\leqslant C t^{2s-1}\left(\|u\|_{L^{1} (B_{3R})}+|u(x)|\int_{\frac{|x|}{2} \leqslant|z|<2R}\frac{1}{|z|^{n+2s}}\,\ud z\right) \\
    &\leqslant Ct^{2s-1}(\|u\|_{L^{1} (B_{3R})}+1).
\end{align*}
If $|z| \geqslant 2R$, we get
\begin{align*}
    \int_{|z|\geqslant2R}\partial_{t}P_{t}(z)\bigl(u(x+z)-u(x)\bigr)\,\ud z &\leqslant  C t^{2s-1} \left(\int_{|z|>3R}\frac{u(z)} {1+|z|^{n+2s}}\,\ud z+|u(x)|\int_{|z|>2R}\frac{1}{|z|^{n+2s}}\,\ud z\right)\\
    &\leqslant Ct^{2s-1}\left(\int_{|z|>3R}\frac{u(z)} {1+|z|^{n+2s}}\,\ud z+1\right).
\end{align*}
Combining the above estimates, we conclude that 
\begin{equation}
    \int_{\Omega_{2}}t^{1-2s}|\partial_{t}U|^2\,\ud X \leqslant C\int_{\Omega_{2}}t^{1-2s}\cdot t^{4s-2}\,\ud X<\infty,
\end{equation}
and Claim~3 is proved.

It remains to verify the Neumann condition in \eqref{eq:extension of u}. By the Caffarelli--Silvestre identity \cite{CaffarelliSilvestre2007}, the conormal derivative satisfies $\frac{\partial U}{\partial\nu^s}(x,0) = -\lim_{t\to 0^+} t^{1-2s}\partial_t U(x,t) = d_s(-\Delta)^s u(x)$ for every $x \neq 0$. Since $u$ solves \eqref{eq:frac Hartree}, this gives $\frac{\partial U}{\partial\nu^s}(x,0) = (\mathcal{R}_\alpha \ast u^{p_{\ast}}) u^{p_{\ast}-1}$, as claimed.
        \end{proof}

The following lemma shows that the Caffarelli--Silvestre extension is continuous in the upper half-space and up to the boundary away from the singular point, which will be needed in the subsequent application of the maximum principle. 

\begin{lemma}\label{U is continuous}
Let $n \geqslant 2$, $s \in (0,1)$, and $\alpha \in (0,n)$. If $u \in \mathcal{C}^{1,1}_{\loc}(\mathbb R^n\setminus\{0\})\cap L_s(\mathbb R^n) \cap L^{1}_{\loc}(\mathbb{R}^{n})$ is a positive solution to \eqref{eq:frac Hartree}, then its Caffarelli--Silvestre extension \(U\in \mathcal{C}^0(\mathbb R^{n+1}_+ \cup \mathbb{R}^{n}\setminus\{0\})\) defined by \eqref{U(x,t)} is continuous.
\end{lemma}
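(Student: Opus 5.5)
We prove continuity in two regimes: in the open half-space by dominated convergence, and up to the boundary away from the origin via the approximate-identity property of the Poisson kernel.

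\textbf{Interior continuity.} Fix $(x_0,t_0)\in\mathbb R^{n+1}_+$ and consider $(x,t)$ with $|x-x_0|<1$ and $t\in(t_0/2,2t_0)$. We bound $P_t(y-x)u(y)$ by an integrable majorant independent of $(x,t)$, as in Claim~1 of Lemma~\ref{Weak solution to the extension problem away from the origin}. For $|y|\leqslant R:=2(|x_0|+2)$ the kernel satisfies $P_t\leqslant C t_0^{-n}$, so the majorant is $C\,u\,\chi_{B_R}$, which is integrable because $u\in L^1_{\loc}$. For $|y|>R$ we have $|y-x|\geqslant|y|/2$, hence $P_t(y-x)\leqslant C t_0^{2s}|y|^{-n-2s}$, which is integrable against $u$ since $u\in L_s(\mathbb R^n)$. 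Because $(x,t)\mapsto P_t(y-x)$ is continuous, dominated convergence gives $U(x,t)\to U(x_0,t_0)$.

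\textbf{Boundary continuity at $x_0\neq0$.} Since $U(x,0)=u(x)$ by convention, it suffices to show $U(x,t)\to u(x_0)$ as $(x,t)\to(x_0,0^+)$. Let $\delta=|x_0|/4$; then $u$ is uniformly continuous on $B_{2\delta}(x_0)$. For $|x-x_0|<\delta$, the normalization $\int P_t=1$ gives
\[
U(x,t)-u(x_0)=\int_{\mathbb R^n}P_t(y-x)\bigl(u(y)-u(x_0)\bigr)\,\ud y .
\]
We split this integral into three regions.
\begin{itemize}
\item[(a)] $|y-x_0|<\eta$ with $\eta\le\delta$: the contribution is at most $\sup_{B_\eta(x_0)}|u-u(x_0)|$, since $\int P_t\leqslant1$. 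This can be made small by choosing $\eta$ small.
\item[(b)] $|y-x_0|\geqslant\eta$ and $|y|\leqslant R$: once $|x-x_0|<\eta/2$ we have $|y-x|\geqslant\eta/2$, so $P_t(y-x)\leqslant C t^{2s}\eta^{-n-2s}$. The contribution is then at most $C t^{2s}\eta^{-n-2s}\bigl(\|u\|_{L^1(B_R)}+|u(x_0)|R^n\bigr)$.
\item[(c)] $|y|>R$: here $P_t(y-x)\leqslant Ct^{2s}(1+|y|)^{-n-2s}$, so the contribution is at most $Ct^{2s}\bigl(\|u\|_{L_s}+|u(x_0)|\bigr)$.
\end{itemize}

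The terms (b) and (c) tend to $0$ as $t\to0^+$, which gives the joint limit; $\eta$ is fixed before letting $t\to0$. Note that the singularity of $u$ at the origin lies in region (b) or (c) and is controlled only through the $L^1_{\loc}$ bound. This is the only delicate point, and the smallness factor $t^{2s}$ handles it.

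Continuity along the boundary itself, i.e. of $x\mapsto u(x)$ on $\mathbb R^n\setminus\{0\}$, follows from $u\in\mathcal C^{1,1}_{\loc}$. Combining the two regimes gives $U\in\mathcal C^0(\mathbb R^{n+1}_+\cup\mathbb R^n\setminus\{0\})$.
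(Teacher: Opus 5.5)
Your proof is correct and follows the same route as the paper. You use dominated convergence for continuity in the open half-space, and the approximate-identity property of $P_t$ for joint continuity at boundary points $x_0\neq0$. Your version is more complete than the paper's in two places: you give a majorant that is uniform in $(x,t)$ near $(x_0,t_0)$, where the paper only cites pointwise absolute convergence, and your three-region split shows explicitly that the singularity at the origin is controlled by the factor $t^{2s}$ together with $u\in L^1_{\loc}\cap L_s(\mathbb{R}^n)$.
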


\begin{proof}
    Let $(x,t)\to (x_0,t_0)$ with $t_0>0$. Since $U$ is defined as \eqref{U(x,t)} and the integral is absolutely convergent by Lemma~\ref{Weak solution to the extension problem away from the origin}, the dominated convergence theorem yields $U(x,t)\to U(x_0,t_0)$. Hence $U\in \mathcal{C}(\mathbb R^{n+1}_+)$. 
    
    On the other hand, by Lemma~\ref{Weak solution to the extension problem away from the origin}, we know that $U(x,t)\to u(x)$ as $t\to0^{+}$ for every $x\neq0$. Since $u\in \mathcal{C}^{1,1}_{\loc}(\mathbb R^n\setminus\{0\})$ and $U(\cdot,t)=P_t\ast u$, where $P_t$ is the Poisson kernel for the extension operator, the convergence $P_t\ast u\to u$ as $t\to 0^+$ is locally uniform on compact subsets of $\mathbb R^n\setminus\{0\}$ by the approximate identity property of $P_t$. This yields the joint continuity: $U(x,t)\to u(x_0)$ whenever $(x,t)\to(x_0,0)$ and $x_0\neq0$. Therefore,
    \(U\in \mathcal{C}^0(\mathbb R^{n+1}_+\cup(\mathbb R^n\setminus\{0\}))\).
    \end{proof}
    Next, we recall a maximum principle for positive supersolutions with an isolated singularity.
    \begin{lemmaletter}[Proposition 3.1 in \cite{JinLiXiong2014}]\label{Proposition 3.1 in J}
    Let $n \geqslant 2$ and $s \in (0,1)$. Suppose that for all \(0<\varepsilon<R\) the extension $U\in W^{1,2}(t^{1-2s},\mathcal{B}_R^+\setminus \overline{\mathcal{B}_\varepsilon^+})$ is a solution to
    \begin{equation*}
        \begin{cases}
        -\operatorname{div}(t^{1-2s}\nabla U)\geqslant 0
        & {\rm in}\  \mathcal{B}_R^+,\\[2mm]
        \dfrac{\partial U}{\partial \nu^s}\geqslant 0
        & {\rm on}\  B_R\setminus \overline{B_\varepsilon},
        \end{cases}
    \end{equation*}
    If $U\in \mathcal{C}(\mathcal{B}_R^+\cup B_R\setminus\{0\})$ and $U>0$ in $\mathcal{B}_R^+\cup B_R\setminus\{0\}$, then
    \[
    \liminf_{X\to 0}U(X)>0.
    \]
    \end{lemmaletter}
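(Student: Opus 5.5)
The idea is to compare $U$ with a multiple of the fundamental solution of the weighted operator, which is the standard barrier argument for the extension problem. The key observation is that
\[
\Phi(X):=|X|^{2s-n},\qquad X=(x,t)\in\overline{\mathbb R^{n+1}_+}\setminus\{0\},
\]
solves $-\operatorname{div}(t^{1-2s}\nabla\Phi)=0$ in $\mathbb R^{n+1}_+$. Indeed, the operator behaves like a Laplacian in dimension $n+2-2s$, and a direct computation gives $\operatorname{div}(t^{1-2s}\nabla\Phi)=t^{1-2s}\Delta\Phi+(1-2s)t^{-2s}\partial_t\Phi=0$. The function $\Phi$ also has zero conormal derivative on $\mathbb R^n\setminus\{0\}$, because
\[
-t^{1-2s}\partial_t\Phi=(n-2s)\,t^{2-2s}|X|^{2s-n-2}\to0 \quad\text{as } t\to0^+,\ x\neq0 .
\]
Moreover, $\Phi$ blows up at the origin, which is exactly what lets us absorb the singularity.

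\emph{Step 1 (positive lower bound on a half-sphere).} Set $\rho:=R/2$. The set $S_\rho:=\{X:|X|=\rho,\ t\geqslant0\}$ is compact and does not contain the origin. It lies in $\mathcal B_R^+\cup B_R\setminus\{0\}$, where $U$ is continuous and positive. Hence
\[
m:=\min_{S_\rho}U>0 .
\]

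\emph{Step 2 (comparison on annuli).} Fix $\delta>0$ and put $W_\delta:=U+\delta\Phi-m$. Then $W_\delta$ is a weak supersolution of the problem in $\mathcal B_R^+$ with $\partial W_\delta/\partial\nu^s\geqslant0$ on $B_R\setminus\overline{B_\varepsilon}$, for every $\varepsilon>0$. On $S_\rho$ we have $W_\delta\geqslant\delta\rho^{2s-n}>0$. Now choose $\varepsilon=\varepsilon(\delta)<\rho$ so small that $\delta\varepsilon^{2s-n}\geqslant m$. Since $U>0$, this gives $W_\delta\geqslant0$ on $\{|X|=\varepsilon,\ t\geqslant0\}$.

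Work in the region $A:=\mathcal B_\rho^+\setminus\overline{\mathcal B_\varepsilon^+}$. By continuity and the hypothesis $U\in W^{1,2}(t^{1-2s},A)$, the function $W_\delta^-$ lies in $W^{1,2}(t^{1-2s},A)$ and vanishes in a neighbourhood of both spherical parts of $\partial A$. It is therefore an admissible nonnegative test function whose only possibly nonzero boundary values lie on the flat part $B_\rho\setminus\overline{B_\varepsilon}$. Testing the supersolution inequality with $W_\delta^-$ yields
\[
\int_A t^{1-2s}|\nabla W_\delta^-|^2\,\ud X\leqslant-\int_{B_\rho\setminus B_\varepsilon}\frac{\partial W_\delta}{\partial\nu^s}\,W_\delta^-\,\ud x\leqslant0 .
\]
So $W_\delta^-$ is constant on the connected set $A$. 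Since it vanishes near $\partial''A$, it is identically zero, i.e.
\[
U\geqslant m-\delta|X|^{2s-n}\quad\text{in } A .
\]

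\emph{Step 3 (letting $\delta\to0$).} For any fixed $X\in\mathcal B_\rho^+\cup B_\rho\setminus\{0\}$, we have $X\in A$ for all sufficiently small $\delta$, because $\varepsilon(\delta)\to0$. Letting $\delta\to0$ gives $U(X)\geqslant m$. Hence $\liminf_{X\to0}U(X)\geqslant m>0$.

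The main obstacle is the justification in Step 2: the sign of the boundary term must be handled carefully at the weak level. We must check that $W_\delta^-$ is admissible in the weighted space, and that the weak formulation of "$-\operatorname{div}(t^{1-2s}\nabla W)\geqslant0$ with $\partial W/\partial\nu^s\geqslant0$" gives exactly the displayed inequality for nonnegative test functions vanishing near the curved boundary. For this I would use the distributional definition of the Neumann condition from Lemma~\ref{Weak solution to the extension problem away from the origin}, and note that $\Phi$ is smooth on $\overline A$, so it contributes no boundary term.
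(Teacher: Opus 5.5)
Your proof is correct and is essentially the standard barrier argument behind the cited Proposition~3.1 of \cite{JinLiXiong2014}; the paper itself only quotes the result and gives no proof. The argument compares $U$ with $m-\delta|X|^{2s-n}$ on half-annuli, using that $|X|^{2s-n}$ solves the degenerate equation with zero conormal derivative away from the origin, applies the weak maximum principle by testing with the negative part (admissible because $W_\delta$ is continuous and strictly positive on the two compact spherical pieces, hence vanishes near them), and then lets $\delta\to0$.
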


   \subsection{Symmetry properties of the extension solution}
   In order to prove Theorem 1.1, it suffices to establish the corresponding symmetry property for the extension solution $U$. Indeed, since $u(x)=U(x,0)$, the desired symmetry of $u$ follows immediately from that of $U$.  

   \begin{proposition}
      Let $n \geqslant 2$, $s \in (0,1)$, and $\alpha \in (0,n)$. If $u \in \mathcal{C}^{1,1}_{\loc}(\mathbb R^n\setminus\{0\})\cap L_s(\mathbb R^n) \cap L^{1}_{\loc}(\mathbb{R}^{n})$ is a positive solution to \eqref{eq:frac Hartree} with a non-removable singularity at the origin, Then, its Caffarelli--Silvestre extension defined by \eqref{U(x,t)} satisfies $U(x,t)=U(|x|,t)$.
   \end{proposition}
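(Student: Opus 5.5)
We follow the moving spheres scheme of Caffarelli--Jin--Sire--Xiong, adapted to the Hartree nonlinearity. For $x\in\mathbb R^n\setminus\{0\}$ and $\lambda\in(0,|x|)$, set $X=(x,0)$ and define the Kelvin transform
\[
U_{X,\lambda}(\xi):=\Bigl(\frac{\lambda}{|\xi-X|}\Bigr)^{n-2s}U\Bigl(X+\frac{\lambda^2(\xi-X)}{|\xi-X|^2}\Bigr),
\]
with trace $u_{x,\lambda}$ on $\{t=0\}$. By conformal invariance of the extension operator, $U_{X,\lambda}$ again solves $-\operatorname{div}(t^{1-2s}\nabla U_{X,\lambda})=0$. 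Because $p_{\ast}=(n+\alpha)/(n-2s)$ is the HLS-critical exponent, the change of variables $y\mapsto x+\lambda^2(y-x)/|y-x|^2$ in the Riesz convolution shows that the boundary condition is preserved:
\[
\frac{\partial U_{X,\lambda}}{\partial\nu^s}=(\mathcal R_\alpha*u_{x,\lambda}^{p_\ast})\,u_{x,\lambda}^{p_\ast-1}
\]
away from $x$ and from the reflected singularity $x^{\lambda}:=x-\lambda^2x/|x|^2$.

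The goal is the following. For each $x\neq0$ and every $\lambda\in(0,|x|)$ we will show
\[
U_{X,\lambda}\leqslant U \quad\text{in } \mathcal{B}_\lambda^+(X)\setminus\{(x^{\lambda},0)\}.
\]
Granting this, we let $\lambda\uparrow|x|$ and obtain a reflection inequality across every sphere through the origin. The standard calculus lemma used by Li and Jin--Li--Xiong then gives radial symmetry of $U(\cdot,t)$ about $0$: one combines the inequalities for $x$ and the reflected points, or equivalently takes the limit $|x|\to\infty$, which turns spheres into planes through the origin and yields $U(x,t)\leqslant U(x',t)$ for the reflection $x'$ across every hyperplane through $0$.

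The comparison is proved by the usual three steps, applied to $W_\lambda:=U-U_{X,\lambda}$ on $\mathcal B_\lambda^+(X)$.

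\textbf{Step 1 (start for small $\lambda$).} On the flat part, the Neumann data satisfy
\[
\frac{\partial W_\lambda}{\partial\nu^s}=(\mathcal R_\alpha*u^{p_\ast})\bigl(u^{p_\ast-1}-u_{x,\lambda}^{p_\ast-1}\bigr)+u_{x,\lambda}^{p_\ast-1}\bigl(\mathcal R_\alpha*(u^{p_\ast}-u_{x,\lambda}^{p_\ast})\bigr).
\]
The second term is the genuinely nonlocal one. Using the Kelvin change of variables, we rewrite $\mathcal R_\alpha*(u^{p_\ast}-u_{x,\lambda}^{p_\ast})$ as an integral over $B_\lambda(x)$ against the positive kernel
\[
|z-y|^{\alpha-n}-\Bigl(\frac{\lambda}{|y-x|}\Bigr)^{n-\alpha}|z-y^{x,\lambda}|^{\alpha-n}\geqslant0,
\]
times $u^{p_\ast}-u_{x,\lambda}^{p_\ast}$. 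Hence the negative part of $W_\lambda$ only feeds back through itself. For small $\lambda$, the $\mathcal C^{1,1}$ regularity of $u$ near $x$ and the bound \eqref{boundedness of the convolution term} give $W_\lambda>0$ near the sphere. Testing the equation against $W_\lambda^-$ (cut off near $x^{\lambda}$, where $U_{X,\lambda}$ blows up but $U$ stays bounded) then gives a weighted trace Sobolev inequality that is absorbed for small $\lambda$, and so $W_\lambda^-\equiv0$.

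\textbf{Step 2 (continuation up to $|x|$).} Let $\bar\lambda(x)=\sup\{\mu\leqslant|x|: W_\lambda\geqslant0 \ \forall\lambda<\mu\}$ and suppose $\bar\lambda<|x|$. At $\lambda=\bar\lambda$ the strong maximum principle, together with the Hopf lemma for degenerate operators, gives $W_{\bar\lambda}>0$ in the interior. The nonlocal term is handled by the positive-kernel representation above. Near the singular point $(x^{\bar\lambda},0)$, which is the image of the origin, we apply Lemma~\ref{Proposition 3.1 in J} to $W_{\bar\lambda}$; this is where non-removability of the singularity is used. Non-removability implies $U_{X,\lambda}$ is the singular one there, so the required $\liminf>0$ needs a modified argument: we apply Lemma~\ref{Proposition 3.1 in J} to $U$ itself and note that $U_{X,\lambda}$ is bounded near the image of the origin. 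This yields a uniform positive lower bound, and a compactness/narrow-domain argument pushes past $\bar\lambda$, a contradiction.

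I expect the main obstacle to be Step 2's handling of the convolution term. We must verify that the positive-kernel identity holds under only the assumptions $u\in L_s\cap L^1_{\loc}$ and the local bound \eqref{boundedness of the convolution term}, and that the testing in Step 1 is legitimate. The latter requires $W_\lambda^-$ to be compactly supported away from both singular points, with finite weighted energy by Lemma~\ref{Weak solution to the extension problem away from the origin} and continuity by Lemma~\ref{U is continuous}.
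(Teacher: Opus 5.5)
There is a genuine gap: the inequality you set out to prove is the reverse of the true one, and it is false. The Kelvin transform is an involution. So for $|\xi-X|>\lambda$ and $\eta=\xi^{X,\lambda}$ one has $U_{X,\lambda}(\eta)\leqslant U(\eta)$ if and only if $U(\xi)\leqslant U_{X,\lambda}(\xi)$. Hence your interior claim $U_{X,\lambda}\leqslant U$ on $\mathcal B^+_\lambda(X)$ is equivalent to $U\leqslant U_{X,\lambda}$ on the exterior.

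This fails near the origin. There $U$ is singular, while $U_{X,\lambda}(\xi)\to(\lambda/|x|)^{n-2s}u(x^{\lambda})<\infty$. Equivalently, as you yourself observe, $U_{X,\lambda}$ blows up at $(x^\lambda,0)$ while $U$ stays bounded there. So $W_\lambda\to-\infty$ near that point, and $W_\lambda^-$ cannot vanish whatever cut-off you use.

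Your Step 1 also fails independently of the singularity. Comparison with the fundamental solution gives $U(Y)\geqslant m\,(\lambda_1/|Y-X|)^{n-2s}$ for $|Y-X|\geqslant\lambda_1$, where $m=\inf_{\partial''\mathcal B_{\lambda_1}(X)}U>0$. Since $|\xi-X|\,|\xi^{X,\lambda}-X|=\lambda^2$, this yields $U_{X,\lambda}(\xi)\geqslant m(\lambda_1/\lambda)^{n-2s}$ for $|\xi-X|<\lambda^2/\lambda_1$. That bound exceeds $\sup_{\mathcal B_{\lambda_1}(X)}U$ once $\lambda$ is small. So $U_{X,\lambda}>U$ near $X$ for all small $\lambda$, the opposite of what Step 1 asserts. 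Your Step 2 shows the same confusion: there $U_{X,\lambda}$ is called both singular and bounded near the image of the origin.

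The correct target, which is what the paper proves, is $U_{X,\lambda}\leqslant U$ on $\{|\xi-X|\geqslant\lambda,\ \xi\neq0\}$ for all $0<\lambda<|x|$. In that region the only singular point is the origin itself. There $U$ is the singular function and $U_{X,\lambda}$ is bounded, which is exactly the situation Lemma~\ref{Proposition 3.1 in J} is designed for.

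\emph{Starting the process.} For $|\xi-X|\geqslant\lambda_1$ one uses the lower bound above. This is where the condition $\lambda<\lambda_2=\lambda_1(m/\sup_{\mathcal B_{\lambda_1}(X)}U)^{1/(n-2s)}$ enters. In the annulus $\lambda<|\xi-X|<\lambda_1$ one uses a narrow-domain absorption. You test with $(U_{X,\lambda}-U)^+$ and use your positive kernel, now for $|y-x|,|z-x|>\lambda$, together with HLS and the weighted trace Sobolev inequality.

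\emph{Continuing past $\bar\lambda$.} Non-removability rules out $U\equiv U_{X,\bar\lambda}$, because $U_{X,\bar\lambda}$ is regular at $0$. The strong maximum principle then gives strict inequality. Lemma~\ref{Proposition 3.1 in J} applied to $U-U_{X,\bar\lambda}$ gives $\liminf_{\xi\to0}(U-U_{X,\bar\lambda})>0$. Compactness away from $\partial''\mathcal B_{\bar\lambda}(X)$ and the narrow-domain argument near it then push past $\bar\lambda$.

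Your positive-kernel representation and your final limiting step carry over unchanged once the comparison region is switched to the exterior.
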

   \begin{proof}
   The proof of this proposition is inspired by \cite{CabreSire2014,JinLiXiong2014,CaffarelliJinSireXiong2014}. It follows from Lemma~\ref{Proposition 3.1 in J} that
     \begin{equation*}
         \liminf_{|\xi| \to 0}U(\xi) >0.
     \end{equation*}
     Define 
     \begin{equation*}
         U_{X,\lambda}(Y):=\left(\frac{\lambda}{|Y-X|}\right)^{n-2s}U\left(Y^{X,\lambda}\right), \qquad Y^{X,\lambda}:=X+\frac{\lambda^{2}(Y-X)}{|Y-X|^{2}},     
    \end{equation*}
    and
    \begin{equation*}
        u_{x,\lambda}(y)=\left(\frac{\lambda}{|y-x|}\right)^{n-2s}u(y^{x,\lambda}),\qquad y^{x,\lambda} = x+\frac{\lambda^2(y-x)}{|y-x|^2},
    \end{equation*}
     where $X=(x,0)$. We first show that the moving sphere process can be initiated. More precisely, for all $x \in \mathbb{R}^n \setminus \{0\}$ there exists $\lambda_3(x)\in (0,|x|)$ such that for all $0<\lambda<\lambda_3(x)$ we have 
     \begin{equation*}
     U_{X,\lambda}(\xi)\leqslant\ U(\xi),
     \quad
     \forall\, |\xi-X|\geqslant\ \lambda,\ \xi\neq 0.
     \end{equation*}
     
     \noindent{\bf Claim~1:} {\it For sufficiently small $\lambda_1 \in (0,|X|)$, there exists $\lambda_2 = \lambda_2(\lambda_1) \in (0,\lambda_1)$ such that $U_{X,\lambda}(\xi) \leqslant U(\xi)$ for all $|\xi-X| \geqslant \lambda_1$ and $0 < \lambda < \lambda_2$.}

     \noindent{\it Proof.}
     Fix a sufficiently small $\lambda_1\in(0,|X|)$. We show that there exists $\lambda_2=\lambda_2(\lambda_1)\in(0,\lambda_1)$ such that
     \[
     U_{X,\lambda}(\xi)\leqslant U(\xi),
     \quad |\xi-X|\geqslant \lambda_1,
     \]
     for every $0<\lambda<\lambda_2$.

To this end, we define $\varphi(\xi)=(\lambda_{1}/|\xi-X|)^{n-2s} \inf_{\partial'' \mathcal{B}_{\lambda_{1}}(X)}U$, where 
\[
\partial'' \mathcal{B}_{\lambda_{1}}(X)=\{\xi \in \overline{\mathbb{R}^{n+1}_{+}} \,\big|\,|\xi-X|=\lambda_{1} \}.
\]
Since $|\xi-X|^{-(n-2s)}$ is the fundamental solution of $-\operatorname{div}(t^{1-2s}\nabla\,\cdot\,)$ in $\mathbb{R}^{n+1}$ (cf.\ \cite{FabesKenigSerapioni1982}), the function $\varphi$ satisfies
\begin{equation*}
\left\{
\begin{aligned}
&-\operatorname{div}\!\left(t^{1-2s}\nabla \varphi\right)=0
&&{\rm in}\  \mathbb{R}^{n+1}_{+}\setminus\mathcal{B}_{\lambda_{1}}^{+}(X) ,\\
&-\lim_{t\to0^{+}} t^{1-2s}\partial_t \varphi(x,t)=
0
&&{\rm on}\  \mathbb{R}^{n} \setminus \overline{B_{\lambda_{1}}(x)} ,
\end{aligned}
\right.
\end{equation*}
and $\varphi(\xi) \leqslant U(\xi)$ on $\partial'' \mathcal{B}_{\lambda_{1}}(X)$. Since $U - \varphi \geqslant 0$ on $\partial''\mathcal{B}_{\lambda_1}(X)$, the weak maximum principle (cf.\ \cite{CaffarelliJinSireXiong2014,JinLiXiong2014}) yields
\begin{equation*}
    U(\xi) \geqslant \left( \frac{\lambda_{1}}{|\xi-X|}\right)^{n-2s} \inf_{\partial'' \mathcal{B}_{\lambda_{1}}(X)} U, \qquad \forall |\xi-X| >\lambda_{1}, \,\xi \in \mathbb{R}^{n+1}_{+}.
\end{equation*}
Now set $\lambda_{2}=\lambda_{1}(\inf_{\partial'' \mathcal{B}_{\lambda_{1}}(X)} U / \sup_{\mathcal{B}_{\lambda_{1}}(X)}U) ^{1/(n-2s)}$. Then for any $0<\lambda<\lambda_2$ and $|\xi-X|\geqslant\lambda_1$,
\begin{equation*}
    U_{X,\lambda}(\xi)
=
\left(\frac{\lambda}{|\xi-X|}\right)^{n-2s}
U\!\left(
Y^{X,\lambda}
\right)
\leqslant
\left(\frac{\lambda_{2}}{|\xi-X|}\right)^{n-2s}
\sup_{\mathcal{B}_{\lambda_{1}}(X)}U
\leqslant
\left(\frac{\lambda_{1}}{|\xi-X|}\right)^{n-2s}
\inf_{\partial''\mathcal{B}_{\lambda_{1}}(X)}U
\leqslant U(\xi),
\end{equation*}
and Claim~1 is proved.

\medskip
\noindent{\bf Claim~2:} {\it There exists $0 < \lambda_3 < \lambda_1$ such that $U_{X,\lambda}(\xi) \leqslant U(\xi)$ for all $0 < \lambda < \lambda_3$ and $\lambda < |\xi-X| < \lambda_1$.}

\noindent{\it Proof.} We show that there exists $0<\lambda_{3}<\lambda_{1}$, such that
\begin{equation*}
    U_{X,\lambda}(\xi) \leqslant U(\xi), \qquad \forall 0<\lambda<\lambda_{3}, \, \lambda<|\xi-X| <\lambda_{1}.
\end{equation*}

For every $0<\lambda<\lambda_{3}<\lambda_{1}$ and $\xi \in \partial''\mathcal{B}_{\lambda_{1}}(X)$, we have $(X+\lambda^{2}(\xi-X)/|\xi-X|^{2}) \in \mathcal{B}_{\lambda_{1}}^{+}(X)$. Hence we can choose $\lambda_{3}=\lambda_{3}(\lambda_{1})<\lambda_{2}$ small such that 
\begin{equation*}
    U_{X,\lambda}(\xi)=
\left(\frac{\lambda}{|\xi-X|}\right)^{n-2s}
U\!\left(Y^{X,\lambda}\right) \leqslant\left(\frac{\lambda_{3}}{\lambda_{1}}\right)^{n-2s} \sup_{\overline{\mathcal{B}_{\lambda_{1}}^{+}(X)}}U \leqslant \inf_{\partial''\mathcal{B}_{\lambda_{1}}(X)} U \leqslant U(\xi).
\end{equation*}
Since $U_{X,\lambda}=U$ on $\partial''\mathcal{B}_{\lambda}(X)$, we obtain that $U_{X,\lambda} \leqslant U$ on $\partial'' (\mathcal{B}_{\lambda_{1}}(X) \setminus \mathcal{B}_{\lambda}(X))$. Note that the above choice of $\lambda_3$ still satisfies the requirement on $\lambda_2$ in Claim~1.

It remains to prove that $U_{X,\lambda}\leqslant U$ in $\mathcal{B}_{\lambda_1}^{+}(X)\setminus\mathcal{B}_{\lambda}^{+}(X)$ for sufficiently small $\lambda_1$ and all $0<\lambda<\lambda_3(\lambda_1)$. Since $u_{x,\lambda}$ also solves \eqref{eq:frac Hartree}, as can be verified by a change of variables in the Riesz integral using the criticality $p_{\ast} = (n+\alpha)/(n-2s)$; cf.\ \cite{CaffarelliJinSireXiong2014} for the Yamabe analog, $U_{X,\lambda}$ satisfies \eqref{eq:extension of u}. We therefore have
     \begin{equation*}
\left\{
\begin{aligned}
&-\operatorname{div}\!\left(t^{1-2s}\nabla \left(U-U_{X,\lambda} \right)\right)=0
&&{\rm in}\  \mathcal{B}_{\lambda_{1}}^{+}(X) \setminus \overline{\mathcal{B}_{\lambda}^{+}(X)} ,\\
&-\lim_{t\to0^{+}} t^{1-2s}\partial_t \left(U-U_{X,\lambda}\right)=
\left(\mathcal{R}_\alpha\ast u^{p_{*}}\right)u^{p_{*}-1}-\left(\mathcal{R}_\alpha\ast u_{x,\lambda}^{p_{*}}\right)u_{x,\lambda}^{p_{*}-1}
&&{\rm on}\  B_{\lambda_{1}}(x) \setminus \overline{B_{\lambda}(x)} ,
\end{aligned}
\right.
\end{equation*}
We shall make use of the idea of the narrow domain technique. Let $( U_{X,\lambda}-U)^{+}:=\max \{0, U_{X,\lambda}-U \}$. Taking $( U_{X,\lambda}-U)^{+}$ as a test function, we have
\begin{align}\label{I1+I2 in symmetry}\nonumber
    \int_{\mathcal{B}_{\lambda_{1}}^{+}(X) \setminus \mathcal{B}_{\lambda}^{+}(X)} & t^{1-2s} \Big|\nabla \left( U_{X,\lambda}-U\right)^{+} \Big|^{2} \\ \nonumber
    =&\int_{B_{\lambda_{1}}(x) \setminus B_{\lambda}(x)}\left(\left(\mathcal{R}_\alpha\ast u_{x,\lambda}^{p_{*}}\right)u_{x,\lambda}^{p_{*}-1}-\left(\mathcal{R}_\alpha\ast u^{p_{*}}\right)u^{p_{*}-1}\right) \left(u_{x,\lambda}-u\right)^{+}\\ \nonumber
    =&\int_{B_{\lambda_{1}}(x) \setminus B_{\lambda}(x)} \left(\mathcal{R}_\alpha\ast \left(u_{x,\lambda}^{p_{*}}-u^{p_{\ast}}\right)\right)u^{p_{\ast}-1}\left(u_{x,\lambda}-u\right)^{+}\\ \nonumber
    &+\int_{B_{\lambda_{1}}(x) \setminus B_{\lambda}(x)}\left(\mathcal{R}_\alpha\ast u_{x,\lambda}^{p_{*}}\right)\left(u_{x,\lambda}^{p_{\ast}-1}-u^{p_{\ast}-1}\right)\left(u_{x,\lambda}-u\right)^{+}\\
    :=&\mathcal{I}_{1}+\mathcal{I}_{2}.
\end{align}

For $\mathcal{I}_{1}$, by a standard Kelvin transform argument and the critical invariance and Claim~1, we have
\begin{align*}
    \mathcal{I}_{1}=&
\int_{B_{\lambda_{1}}(x)\setminus B_\lambda(x)}
u^{p_{\ast}-1}(y)(u_{x,\lambda}-u)^+(y)
\int_{\{|z-x|\geqslant\lambda\}}
K_\lambda(y,z)
\bigl(u_{x,\lambda}^{p_{\ast}}(z)-u^{p_{\ast}}(z)\bigr)
\,\ud z\,\ud y\\
&\leqslant\int_{B_{\lambda_{1}}(x)\setminus B_\lambda(x)}\int_{B_{\lambda_{1}}(x)\setminus B_\lambda(x)}\frac{u_{x,\lambda}^{p_{\ast}-1}(y)(u_{x,\lambda}-u)^+(y) \left(u_{x,\lambda}^{p_{\ast}}(z)-u^{p_{\ast}}(z) \right)}{|y-z|^{n-\alpha}}\,\ud z\,\ud y,
\end{align*}
where
\begin{equation*}
    K_\lambda(y,z):=\frac1{|y-z|^{n-\alpha}}
-
\left(\frac{\lambda}{|y-x|}\right)^{n-\alpha}
\frac1{|y^{x,\lambda}-z|^{n-\alpha}}
\end{equation*}
and $K_\lambda(y,z)>0$ for all $|y-x| >\lambda$ and $|z-x|> \lambda$. In the preceding inequality, the integral over $\{|z-x|>\lambda_1\}$ has been dropped since $u_{x,\lambda}^{p_{\ast}} \leqslant u^{p_{\ast}}$ there by Claim~1, making the integrand nonpositive. Now, on the support of $(u_{x,\lambda}-u)^+$ one has $u^{p_{\ast}-1} \leqslant u_{x,\lambda}^{p_{\ast}-1}$, and the mean value theorem gives $u_{x,\lambda}^{p_{\ast}}(z) - u^{p_{\ast}}(z) \leqslant p_{\ast} u_{x,\lambda}^{p_{\ast}-1}(z)(u_{x,\lambda}(z) - u(z))^+$. Therefore,
\begin{align*}
    \mathcal{I}_{1} &\leqslant C\int_{B_{\lambda_{1}}(x)\setminus B_\lambda(x)}\int_{B_{\lambda_{1}}(x)\setminus B_\lambda(x)}\frac{u_{x,\lambda}^{p_{\ast}-1}(y)(u_{x,\lambda}-u)^+(y) u_{x,\lambda}^{p_{\ast}-1}(z)(u_{x,\lambda}-u)^+(z)}{|y-z|^{n-\alpha}}\,\ud z\,\ud y \\
    & \leqslant C \left(\int_{B_{\lambda_{1}}(x)\setminus B_\lambda(x)}\left(\left(u_{x,\lambda}-u\right)^{+}(y)\right)^{\frac{2n}{n+\alpha}}u_{x,\lambda}^{\frac{2n(\alpha+2s)}{(n-2s)(n+\alpha)}}(y) \,\ud y \right)^{\frac{n+\alpha}{n}}\\
    & \leqslant C \left(\int_{B_{\lambda_{1}}(x)\setminus B_\lambda(x)} \left(\left( u_{x,\lambda}-u\right)^{+}(y) \right)^{\frac{2n}{n-2s}}\,\ud y\right)^{\frac{n-2s}{n}} \left( \int_{B_{\lambda_{1}}(x)\setminus B_\lambda(x)} u_{x,\lambda}^{\frac{2n}{n-2s}}(y) \,\ud y\right)^{\frac{\alpha+2s}{n}} \\
    & \leqslant C \left(\int_{B_{\lambda_{1}}(x)\setminus B_\lambda(x)} \left(\left( u_{x,\lambda}-u\right)^{+}(y) \right)^{\frac{2n}{n-2s}}\,\ud y\right)^{\frac{n-2s}{n}} \left( \int_{B_{\lambda_{1}(x)}} u^{\frac{2n}{n-2s}}(y) \,\ud y\right)^{\frac{\alpha+2s}{n}},
\end{align*}
where we have used the Hardy--Littlewood--Sobolev inequality and H\"older's inequality in the second and the last steps, respectively. By the weighted Sobolev trace inequality (cf.\ \cite{CaffarelliSilvestre2007, CabreSire2014}), we obtain that
\begin{equation}\label{I1 in symmetry}
    \mathcal{I}_{1} \leqslant C \left( \int_{\mathcal{B}_{\lambda_{1}}^{+}(X) \setminus \mathcal{B}_{\lambda}^{+}(X)} t^{1-2s} \Big| \nabla \left(U_{X,\lambda}-U\right)^{+} \Big|^{2}  \right) \left( \int_{B_{\lambda_{1}(x)}} u^{\frac{2n}{n-2s}}(y) \,\ud y\right)^{\frac{\alpha+2s}{n}}.
\end{equation}

For $\mathcal{I}_{2}$, using the Kelvin transform and the inversion distance identity
\begin{equation*}
    |y-z|=\frac{|y-x|\,|z-x|}{\lambda^{2}}\,|y^{x,\lambda}-z^{x,\lambda}|,
\end{equation*}
we have
\begin{equation*}
    (\mathcal{R}_\alpha*u_{x,\lambda}^{p_{\ast}})(y)
=
\left(\frac{\lambda}{|y-x|}\right)^{n-\alpha}
(\mathcal{R}_\alpha*u^{p_{\ast}})(y^{x,\lambda}) \leqslant \sup_{  B_{\lambda_{1}}(x)} (\mathcal{R}_\alpha*u^{p_{\ast}}) .
\end{equation*}
This, together with the mean value theorem, yields
\begin{align*}
    \mathcal{I}_{2} &\leqslant C\left(\sup_{B_{\lambda_{1}}(x)} (\mathcal{R}_\alpha*u^{p_{\ast}}) \right)\int_{B_{\lambda_{1}}(x)\setminus B_\lambda(x)} \max \{u^{p_{\ast}-2}, u_{x,\lambda}^{p_{\ast}-2} \}(y)\left( \left(u_{x,\lambda}-u\right)^{+}  (y)\right)^{2}\,\ud y\\
    &\leqslant C \left(\sup_{B_{\lambda_{1}}(x)} (\mathcal{R}_\alpha*u^{p_{\ast}}) \right) \left(\sup_{B_{\lambda_{1}}(x)}u^{p_{\ast}-2}\right) \int_{B_{\lambda_{1}}(x)\setminus B_\lambda(x)}\left( \left(u_{x,\lambda}-u\right)^{+}  (y)\right)^{2}\,\ud y,
\end{align*}
where we used the definition of $u_{x,\lambda}$ and the fact that
\begin{equation*}
    \max\{u^{p_{\ast}-2},u_{x,\lambda}^{p_{\ast}-2}\}
=
\begin{cases}
u_{x,\lambda}^{p_{\ast}-2}, & p_{\ast}\geqslant 2,\\
u^{p_{\ast}-2}, & 1<p_{\ast}<2,
\end{cases}
\end{equation*}
 on the support of $(u_{x,\lambda}-u)^+$. Then, by H\"older's inequality and the weighted Sobolev trace inequality (cf.\ \cite{CaffarelliSilvestre2007, CabreSire2014}), we obtain that
\begin{align}\label{I2 in symmetry}
\mathcal{I}_{2}
&\leqslant
C
\Bigl(\sup_{B_{\lambda_1}(x)}
(\mathcal{R}_\alpha*u^{p_{\ast}})\Bigr)
\Bigl(\sup_{B_{\lambda_1}(x)}
u^{p_{\ast}-2}\Bigr)
|B_{\lambda_1}(x)\setminus B_\lambda(x)|^{\frac{2s}{n}}
\int_{\mathcal{B}_{\lambda_1}^+(X)\setminus \mathcal{B}_\lambda^+(X)}
t^{1-2s}
\left|\nabla (U_{x,\lambda}-U)^+\right|^2.
\end{align}
By \eqref{boundedness of the convolution term}, the positivity of \(u\), and the fact that
\(u\in \mathcal{C}^{1,1}_{\loc}(\mathbb R^n\setminus\{0\})\), for \(\lambda_1>0\) sufficiently small, we have
\begin{equation}\label{Cx}
    C\left(\sup_{B_{\lambda_{1}}(x)} (\mathcal{R}_\alpha*u^{p_{\ast}}) \right) \left(\sup_{B_{\lambda_{1}}(x)}u^{p_{\ast}-2}\right) \leqslant C\sup_{B_{|x|/2}(x)}\left(\left( \mathcal{R}_\alpha*u^{p_{\ast}}\right)u^{p_{\ast}-2} \right)=:C_x<\infty,
\end{equation}
where $C_x$ is independent of $\lambda_1$.

It follows from \eqref{I1+I2 in symmetry}, \eqref{I1 in symmetry}, \eqref{I2 in symmetry}, and \eqref{Cx} that
\begin{equation*}
    \int_{\mathcal{B}_{\lambda_{1}}^{+}(X) \setminus \mathcal{B}_{\lambda}^{+}(X)} t^{1-2s} \Big|\nabla \left( U_{X,\lambda}-U\right)^{+} \Big|^{2} \leqslant C_{\lambda_{1}} \int_{\mathcal{B}_{\lambda_{1}}^{+}(X) \setminus \mathcal{B}_{\lambda}^{+}(X)} t^{1-2s} \Big|\nabla \left( U_{X,\lambda}-U\right)^{+} \Big|^{2},
\end{equation*}
where
\begin{equation*}
    C_{\lambda_{1}}=C\left( \int_{B_{\lambda_{1}(x)}} u^{\frac{2n}{n-2s}}(y) \,\ud y\right)^{\frac{\alpha+2s}{n}}+ C_{x}
|B_{\lambda_1}(x)\setminus B_\lambda(x)|^{\frac{2s}{n}}.
\end{equation*}
Here, $C$ is a positive constant depending only on $n, \alpha$ and $s$. Since $u \in \mathcal{C}^{1,1}_{\loc}(\mathbb{R}^{n} \setminus \{0\})$, we can choose $\lambda_{1}>0$ sufficiently small such that $C_{\lambda_{1}}<1$. Then,
\begin{equation*}
    \nabla \left( U_{X,\lambda}-U\right)^{+}=0 \quad {\rm in}\ \mathcal{B}_{\lambda_1}^+(X)\setminus \mathcal{B}_\lambda^+(X).
\end{equation*}
Since
\begin{equation*}
    \left( U_{X,\lambda}-U\right)^{+}=0 \quad {\rm on}\  \partial''\left(\mathcal{B}_{\lambda_{1}}(X) \setminus \mathcal{B}_{\lambda}(X)\right),
\end{equation*}
we have
\begin{equation*}
    \left( U_{X,\lambda}-U\right)^{+}=0 \quad {\rm in}\ \mathcal{B}_{\lambda_1}^+(X)\setminus \mathcal{B}_\lambda^+(X).
\end{equation*}
Hence Claim~2 is proved.

     Now we define 
     \begin{equation*}
        \bar{\lambda}(x):=\sup\Bigl\{0<\mu\leqslant |x|\;\Big|\;U_{X,\lambda}(\xi)\leqslant U(\xi),\quad\forall\, |\xi-X|\geqslant\lambda,\ \xi\neq 0,\quad\forall\, 0<\lambda<\mu\Bigr\}.
     \end{equation*}
     
     \medskip
     \noindent{\bf Claim~3:} {\it $\bar{\lambda}(x) = |x|$ for every $x \neq 0$.}

     \noindent{\it Proof.}
     Suppose that $\bar{\lambda}(x)<|x|$ for some $x \neq 0$. Since $0$ is a non-removable singularity, the case $U\equiv U_{X,\bar{\lambda}}$ is impossible. Hence, by the strong maximum principle (cf.~\cite{CabreSire2014}), we have
     \begin{equation*}
         U(\xi)>U_{X,\bar{\lambda}}(\xi),
\qquad
\xi\in\overline{\mathbb{R}^{n+1}_+}\setminus\{0\},
\ |\xi-X|>\bar{\lambda}.
     \end{equation*}
Then by Lemma~\ref{U is continuous} and Lemma~\ref{Proposition 3.1 in J}, we have 
     \begin{equation*}
         \liminf_{\xi \to 0} \left( U(\xi)-U_{X,\bar{\lambda}}(\xi)\right)>0.
     \end{equation*}
     For $\delta>0$ small, which will be fixed later, we denote 
     \[
     K_{\delta}= \left\{ \xi \in \overline{\mathbb{R}^{n+1}_{+}} : |\xi-X| \geqslant \bar{\lambda}+\delta \quad {\rm and} \quad \xi\neq 0  \right\}.
     \]
     By the uniform continuity of $U$ on compact sets, there exists $\varepsilon$ small such that for $\bar{\lambda}<\lambda<\bar{\lambda}+\varepsilon$, we have 
     \begin{equation*}
         U-U_{X,\lambda}>0 \quad {\rm in} \ K_{\delta}.
     \end{equation*}
     Now, let us focus on the region
     \[
     \widehat{K}_{\delta}= \left\{ \xi \in \overline{\mathbb{R}^{n+1}_{+}} : \lambda\leqslant|\xi-X| < \bar{\lambda}+\delta  \right\}.
     \]
     
     Using the narrow domain technique and arguing as in the proof of Claim~2, we can choose $\varepsilon$ and $\delta$ sufficiently small such that
     \begin{equation*}
          U_{X,\lambda}(\xi) \leqslant U(\xi) \quad {\rm in} \ \widehat{K}_{\delta}.
     \end{equation*}
     In conclusion, there exists $\varepsilon_{1}>0$ such that for all $\bar{\lambda}<\lambda<\bar{\lambda}+\varepsilon_{1}$,
     \begin{equation*}
         U_{X,\lambda}(\xi) \leqslant U(\xi) \quad {\rm for\ all} \quad |\xi-X| \geqslant\lambda, \quad \xi \neq0, \quad {\rm and} \quad \xi \in \overline{\mathbb{R}^{n+1}_{+}},
     \end{equation*}
which contradicts the definition of $\bar{\lambda}$, and Claim~3 is proved.

Hence, we have 
\begin{equation}\label{moving sphere inequality}
U_{X,\lambda}(\xi)\leqslant U(\xi),
\quad {\rm for\ all} \quad |\xi-X| \geqslant\lambda, \quad \xi \neq0, \quad {\rm and} \quad 0<\lambda<|x|.
\end{equation}
For any unit vector $e\in\mathbb R^n$, any $a>0$, and any
$\xi=(y,t)\in\mathbb R^{n+1}_+$ satisfying
$(\xi-ae)\cdot e<0$, we choose $x=Re$ and $\lambda=R-a$ in \eqref{moving sphere inequality}. By a standard computation, the Kelvin inversion centered at $X=Re$ with radius $\lambda=R-a$ converges to the reflection across the hyperplane $\{y\cdot e=a\}$ as $R\to\infty$. Letting $R\to\infty$, we obtain
\[
U(y,t) \geqslant U(y-2(y \cdot e-a)e,t).
\]
Since the unit vector $e$ and $a>0$ are arbitrary, $U(\cdot,t)$ is symmetric across every hyperplane through the origin in the $y$-variable, and hence $U$ is radially symmetric with respect to the $y$-variable and nonincreasing in $|y|$. Consequently, $u$ is radially symmetric about the origin and nonincreasing with respect to $r=|x|$.
 \end{proof}

    \section{Reformulation for Delaunay-type solutions}\label{sec:prelim}

    In this section, we begin the study of the existence of Delaunay-type solutions to equation \eqref{eq:frac Hartree}. We first reformulate the problem through the Emden--Fowler transformation and set up the variational framework. Throughout this section, we restrict our attention to radially symmetric solutions.

	\subsection{Periodic formulation of the problem}
	We consider the singular fractional Hartree equation \eqref{eq:frac Hartree} with $n>2s$, $s \in (0,1)$ and $\alpha \in (0,n)$. We consider radially symmetric solutions of the form
	\begin{equation}\label{form of u}
		u(x)=|x|^{-\frac{n-2s}{2}}v(|x|).
	\end{equation}
    Using polar coordinates $x = r\theta$ and $y = \rho\sigma$ with $r=|x|$, $\rho=|y|$ and $\theta,\sigma \in \mathbb{S}^{n-1}$, the fractional Laplacian in \eqref{fraction laplace} can be rewritten as
	\begin{equation*}
		\left(-\Delta\right)^{s}u(x)=\kappa_{n,s}\, \mathrm{P.V.}\int_{0}^{\infty}\int_{\mathbb{S}^{n-1}}\frac{r^{-\frac{n-2s}{2}}v(r)-\rho^{-\frac{n-2s}{2}}v(\rho)}{\left|r^{2}+\rho^{2}-2r\rho\langle \theta, \sigma \rangle\right|^{\frac{n+2s}{2}}}\rho^{n-1}\,\ud \sigma\,\ud \rho.
	\end{equation*}
	Let $\rho = r\bar{\rho}$ and apply the Emden--Fowler transformation
	$r = e^{t}$ and $\rho = e^{\tau}$. Then the equation \eqref{eq:frac Hartree} for $v$ becomes
	\begin{equation}\label{eq:Lgamma=hartree}
		\mathcal{L}_{s}v(t)=\kappa_{n,s}\,\mathrm{P.V.}
		\int_{-\infty}^{+\infty}
		\bigl(v(t)-v(\tau)\bigr)
		\mathcal{K}_{s}(t-\tau) \ud\tau+Av(t)=\left(\int_{-\infty}^{+\infty}\mathcal{K}_{-\frac{\alpha}{2}}(t-\tau)v(\tau)^{p_{\ast}} \ud\tau\right)v(t)^{p_{\ast}-1}.
	\end{equation}
	The left-hand side is derived as in \cite{MR3694655}. For the right-hand side, we write $(\mathcal{R}_\alpha * u^{p_{\ast}})(e^t\theta)$ in polar coordinates $y = e^\tau\sigma$ with $\sigma \in \mathbb{S}^{n-1}$ and substitute \eqref{form of u}. Factoring $e^{t+\tau}$ from the Riesz kernel $|e^t\theta - e^\tau\sigma|^{-(n-\alpha)}$, the exponential prefactors in $\tau$ combine to give a total exponent
	\[
	n - \tfrac{(n-2s)p_{\ast}}{2} - \tfrac{n-\alpha}{2} = 0,
	\]
	since $p_{\ast} = (n+\alpha)/(n-2s)$, so only the kernel $\mathcal{K}_{-\frac{\alpha}{2}}(t-\tau)$ and a factor $e^{-t(n-\alpha)/2}$ remain. Multiplying by $u(e^t\theta)^{p_{\ast}-1} = e^{-t\frac{(n-2s)(p_{\ast}-1)}{2}} v(t)^{p_{\ast}-1}$ and observing that $\tfrac{n-\alpha}{2} + \tfrac{(n-2s)(p_{\ast}-1)}{2} = \tfrac{n+2s}{2}$ matches the exponent from the left-hand side, the equation \eqref{eq:Lgamma=hartree} follows.

	In \eqref{eq:Lgamma=hartree},
	\begin{equation*}
		A = \kappa_{n,s}\,\mathrm{P.V.}\!
		\int_{0}^{\infty}\!\int_{\mathbb{S}^{n-1}}
		\frac{\left(1-\bar{\rho}^{-\frac{n-2s}{2}}\right)\bar{\rho}^{\,n-1}}
		{\left|1+\bar{\rho}^{2}-2\bar{\rho}\langle \theta,\sigma\rangle\right|^{\frac{n+2s}{2}}}
		\, \ud\sigma\, \ud\bar{\rho}=c_{n,
			s}>0
		\quad \text{(cf.\ \cite[Lemma~3.2]{MR3694655})},
	\end{equation*}
	and the kernel $\mathcal{K}_{m}$ for $-\frac{n}{2}<m<1$ is defined by
	\begin{equation}\label{Km}
		\mathcal{K}_{m}(t)=2^{-\frac{n+2m}{2}}
		\int_{\mathbb{S}^{n-1}}
		\frac{1}{\left|\cosh(t)-\langle\theta,\sigma\rangle\right|^{\frac{n+2m}{2}}}
		\, \ud\sigma
		=
		\int_{\mathbb{S}^{n-1}}
		\frac{1}
		{\left(e^{t}+e^{-t}-2\langle\theta,\sigma\rangle\right)^{\frac{n+2m}{2}}}
		\, \ud\sigma .
	\end{equation}
	
	The following lemma collects the key properties of the kernel $\mathcal{K}_m$ that will be used throughout the paper. Since the equation involves only the values $m=-\alpha/2$ and $m=s$, we state the result for $\mathcal{K}_{-\frac{\alpha}{2}}$ with $\alpha\in(0,n)$ and $\mathcal{K}_{s}$ with $s\in(0,1)$.
	\begin{lemma}[Properties of the kernels]\label{Pro of ker}
		Let $n \geqslant 2$, $s \in (0,1)$, and $\alpha \in (0,n)$, and let $\mathcal{K}_m$ be defined by \eqref{Km}. The following properties hold.
		\begin{enumerate}
			\item[{\rm (i)}] The kernel $\mathcal{K}_{-\frac{\alpha}{2}}$, with $\alpha \in (0,n)$, satisfies the following.
			\begin{enumerate}
				\item[{\rm (a)}] $\mathcal{K}_{-\frac{\alpha}{2}}$ is strictly positive, even, and belongs to $L^{1}(\mathbb{R})$.
				\item[{\rm (b)}] As $|t|\to+\infty$, one has
				\[
				\mathcal{K}_{-\frac{\alpha}{2}}(t)=O\!\left(e^{-\frac{n-\alpha}{2}|t|}\right).
				\]
				\item[{\rm (c)}] As $|t|\to0$, the singularity depends on $\alpha$. If $\alpha\in(0,1)$, then
				\[
				\mathcal{K}_{-\frac{\alpha}{2}}(t)\sim |t|^{\alpha-1}.
				\]
				If $\alpha=1$, then $\mathcal{K}_{-\frac{1}{2}}(t)\sim |\ln|t||$.
				\item[{\rm (d)}] If $\alpha>1$, then $\mathcal{K}_{-\frac{\alpha}{2}}$ is bounded and H\"older continuous on $\mathbb{R}$.
			\end{enumerate}

			\item[{\rm (ii)}] The kernel $\mathcal{K}_{s}$, with $s \in (0,1)$, satisfies the following.
			\begin{enumerate}
				\item[{\rm (a)}] $\mathcal{K}_{s}$ is strictly positive and even.
				\item[{\rm (b)}] As $|t|\to+\infty$, one has
				\[
				\mathcal{K}_{s}(t)
				= O\!\left(e^{-\frac{n+2s}{2}|t|}\right).
				\]
				\item[{\rm (c)}] As $|t|\to0$, one has
				\[
				\mathcal{K}_{s}(t)\sim |t|^{-1-2s}.
				\]
			\end{enumerate}
		\end{enumerate}
	\end{lemma}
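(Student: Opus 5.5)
The plan is to work directly from the explicit formula \eqref{Km}. The first step is to reduce the spherical integral to a one-dimensional one. By rotation invariance, $\mathcal{K}_m(t)$ does not depend on $\theta$, so we may take $\theta=e_n$ and write $\langle\theta,\sigma\rangle=\cos\phi$. Setting $\beta:=\frac{n+2m}{2}>0$ (which holds since $m>-n/2$), this gives
\[
\mathcal{K}_m(t)=2^{-\beta}\,|\mathbb{S}^{n-2}|\int_0^\pi\frac{\sin^{n-2}\phi}{(\cosh t-\cos\phi)^{\beta}}\,\ud\phi .
\]
The integrand is positive, which gives strict positivity. Since $\cosh$ is even, so is $\mathcal{K}_m$. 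This settles (i)(a) and (ii)(a), apart from integrability.

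\emph{Decay (i)(b), (ii)(b).} For $|t|\geqslant1$ we have $\cosh t-\cos\phi\geqslant \cosh t-1\geqslant c\,e^{|t|}$. Hence $\mathcal{K}_m(t)\leqslant C e^{-\beta|t|}$. For $m=-\alpha/2$ this gives $\beta=\frac{n-\alpha}{2}$, and for $m=s$ it gives $\beta=\frac{n+2s}{2}$, as claimed.

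\emph{Local behaviour, (i)(c), (i)(d), (ii)(c).} Near $t=0$, the only singularity sits at $\phi=0$. There
\[
\cosh t-\cos\phi=\tfrac{t^2+\phi^2}{2}\bigl(1+O(t^2+\phi^2)\bigr)\quad\text{and}\quad \sin^{n-2}\phi\sim\phi^{n-2}.
\]
The integral away from $\phi=0$ is bounded uniformly in $t$. Near $\phi=0$ we must study
\[
I(t)=\int_0^{\delta}\frac{\phi^{n-2}}{(t^2+\phi^2)^{\beta}}\,\ud\phi .
\]
The substitution $\phi=|t|\psi$ gives
\[
I(t)=|t|^{n-1-2\beta}\int_0^{\delta/|t|}\frac{\psi^{n-2}}{(1+\psi^2)^{\beta}}\,\ud\psi .
\]
The behaviour now depends on the sign of $n-1-2\beta=-1-2m$:
\begin{itemize}
\item If $2\beta>n-1$, the $\psi$-integral converges at infinity, so $I(t)\sim c|t|^{-1-2m}$. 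For $m=s$ this is $|t|^{-1-2s}$, which is (ii)(c). For $m=-\alpha/2$ with $\alpha<1$ it is $|t|^{\alpha-1}$.
\item If $2\beta=n-1$, i.e.\ $\alpha=1$, the $\psi$-integral grows like $\ln(\delta/|t|)$, which gives the logarithmic singularity.
\item If $2\beta<n-1$, i.e.\ $\alpha>1$, the integrand $\phi^{n-2-2\beta}$ is integrable at $t=0$. Dominated convergence then shows $\mathcal{K}_{-\alpha/2}$ is continuous and bounded.
\end{itemize}
To make the asymptotics precise, I would bound the correction factor $1+O(t^2+\phi^2)$ above and below by constants on $[0,\delta]$. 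The error terms it produces are of lower order and are absorbed into the bounded part.

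\emph{Integrability.} By (b) and (c), $\mathcal{K}_{-\alpha/2}\in L^1(\mathbb{R})$. The tail decays exponentially because $\alpha<n$. The local singularity is either $|t|^{\alpha-1}$ with $\alpha>0$, or logarithmic, or absent, and all three are integrable.

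\emph{H\"older continuity in (i)(d).} This is the step I expect to require the most care. I would estimate $|\mathcal{K}(t)-\mathcal{K}(t')|$ by using
\[
\bigl|(a+\phi^2)^{-\beta}-(b+\phi^2)^{-\beta}\bigr|\leqslant \min\{\phi^{-2\beta},\ \beta|a-b|\,\phi^{-2\beta-2}\},
\]
with $a\approx t^2$ and $b\approx t'^2$. Splitting the $\phi$-integral at $\phi=|t-t'|^{1/2}$ (or at $|a-b|^{1/2}$), and using $n-2-2\beta>-1$, yields a bound $C|t-t'|^{\gamma}$ with $\gamma=\min\{1,(\alpha-1)/2\}$, up to a possible logarithmic loss at the endpoint, which can be avoided by taking $\gamma$ slightly smaller. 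Away from $t=0$ the kernel is smooth, so this gives global H\"older continuity.
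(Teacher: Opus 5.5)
Your proposal is correct, but it follows a different route from the paper. The paper does not derive the kernel properties itself. It imports (i)(b)--(d), (ii)(a)--(c), and the positivity and evenness of $\mathcal{K}_{-\frac{\alpha}{2}}$ from \cite[Lemma~2.1]{JinXiong2020} and \cite[Lemma~3.3]{MR3694655}. The only thing it actually proves is $\mathcal{K}_{-\frac{\alpha}{2}}\in L^1(\mathbb{R})$, using the same tail/near-origin splitting as your integrability step.

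You instead obtain everything directly from \eqref{Km}:
\begin{itemize}
\item the zonal reduction to $\int_0^\pi\sin^{n-2}\phi\,(\cosh t-\cos\phi)^{-\beta}\,\ud\phi$, which gives positivity and evenness;
\item the bound $\cosh t-1\geqslant c\,e^{|t|}$ for $|t|\geqslant1$, which gives the decay;
\item the scaling $\phi=|t|\psi$, which shows that the single exponent $-1-2m$ governs all three local regimes. This also explains why the threshold is $m=-\tfrac12$, i.e.\ $\alpha=1$.
\end{itemize}
The paper's route is shorter. Yours is self-contained and gives explicit constants. It also avoids having to check that the cited lemmas cover exactly the parameter values $m=-\alpha/2$ and $m=s$.

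Two minor simplifications are available:
\begin{itemize}
\item The two-sided bound $c(t^2+\phi^2)\leqslant\cosh t-\cos\phi\leqslant C(t^2+\phi^2)$ holds for $|t|\leqslant 1$ and $\phi\in[0,\pi]$, via $1-\cos\phi=2\sin^2(\phi/2)$. It gives the comparability in (c) and the domination in (d) without any Taylor correction.
\item In the H\"older step, you can take $a=\cosh t-1$ and $b=\cosh t'-1$, and replace $\phi^2$ by $1-\cos\phi$. The kernel is then exactly of the form $(a+(1-\cos\phi))^{-\beta}$, with no approximation, and $|a-b|\leqslant \sinh(2)\,|t-t'|$ for $|t|,|t'|\leqslant 2$.
\end{itemize}
Your exponent $\min\{1,(\alpha-1)/2\}$ is not sharp. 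In fact $\mathcal{K}_{-\frac{\alpha}{2}}(0)-\mathcal{K}_{-\frac{\alpha}{2}}(t)$ is of order $|t|^{\min\{\alpha-1,2\}}$, with a logarithmic factor at $\alpha=3$. Any positive exponent, however, suffices for (d).
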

	\begin{proof}
		The qualitative properties {\rm (i)(b)--(d)} and {\rm (ii)(a)--(c)} are established in \cite[Lemma~2.1]{JinXiong2020} and \cite[Lemma~3.3]{MR3694655}. The positivity and evenness of $\mathcal{K}_{-\frac{\alpha}{2}}$ follow from the same references. It remains to verify the global integrability $\mathcal{K}_{-\frac{\alpha}{2}}\in L^1(\mathbb{R})$, which we now prove.

		Since $\mathcal{K}_{-\frac{\alpha}{2}}$ is even, it suffices to show that $\mathcal{K}_{-\frac{\alpha}{2}}\in L^1(0,+\infty)$, and we split the integral into the contributions near the origin and at infinity.

		For the tail, fix $\alpha\in(0,n)$. It follows from {\rm (i)(b)} that there exist constants $C,M>0$ such that $\mathcal{K}_{-\frac{\alpha}{2}}(t)\leqslant C e^{-\frac{n-\alpha}{2}t}$ for all $t\geqslant M$. Hence
		\[
			\int_{M}^{+\infty} \mathcal{K}_{-\frac{\alpha}{2}}(t)\,\ud t
			\leqslant C \int_{M}^{+\infty} e^{-\frac{n-\alpha}{2}t}\,\ud t
			= \frac{2C}{n-\alpha}\,e^{-\frac{n-\alpha}{2}M}
			< +\infty.
		\]

		For the behavior near the origin, it follows from {\rm (i)(c)--(d)} that, as $t\to0^+$,
		\[
			\mathcal{K}_{-\frac{\alpha}{2}}(t)\sim
			\begin{cases}
				t^{\alpha-1}, & 0<\alpha<1,\\
				|\ln t|, & \alpha=1,\\
				1, & 1<\alpha<n.
			\end{cases}
		\]
		In each case, the right-hand side is integrable on $(0,1)$, so that $\mathcal{K}_{-\frac{\alpha}{2}}\in L^1(0,1)$.

		Combining the two estimates, we conclude that $\mathcal{K}_{-\frac{\alpha}{2}}\in L^1(0,+\infty)$, and therefore $\mathcal{K}_{-\frac{\alpha}{2}}\in L^1(\mathbb{R})$.
	\end{proof}
	We are looking for periodic solutions of \eqref{eq:Lgamma=hartree}. Assume that $v(t+L)=v(t)$. In this case, equation \eqref{eq:Lgamma=hartree} becomes
	\begin{equation}\label{eq:periodic hartree 2}
		\mathcal{L}_{s}^{L} v(t)=\kappa_{n,s}\,\mathrm{P.V.}
		\int_{0}^{L}\bigl(v(t)-v(\tau)\bigr)\mathcal{K}_{s}^{L}(t-\tau)\,\ud\tau+c_{n,s} v(t)=\left(\int_{0}^{L}\mathcal{K}_{-\frac{\alpha}{2}}^{L}(t-\tau)\, v(\tau)^{p_{\ast}}\, \ud\tau\right)v(t)^{p_{\ast}-1},
	\end{equation}
	and
	\begin{equation}\label{Kml}
		\mathcal{K}_{m}^{L}(t)=\sum_{j\in\mathbb{Z}}\mathcal{K}_{m}\left(t+jL\right)
	\end{equation}
	for the kernel given in \eqref{Km}. By Lemma~\ref{Pro of ker}, we know that $\mathcal{K}_{-\frac{\alpha}{2}}^{L} \in L^{1}(0,L)$ for $\alpha \in (0,n)$.
	
	\subsection{Extension problem}
	We restrict attention to radially symmetric
	solutions of the fractional Hartree equation
	\eqref{eq:frac Hartree}. As explained in the
	previous subsection, after applying the scaling
	transformation \eqref{scaling} together with the
	Emden--Fowler transformation $r=e^{t}$, the problem \eqref{eq:frac Hartree} can be
	reformulated as \eqref{eq:Lgamma=hartree} on the
	cylindrical manifold
	\[
	M=\mathbb{R}\times \mathbb{S}^{n-1},
	\]
	where the original scaling invariance of
	\eqref{eq:frac Hartree} is converted into
	translation invariance in the variable $t$.

	To obtain a local realization of the nonlocal operator, we employ the Caffarelli--Silvestre extension, which was originally introduced by Caffarelli and Silvestre \cite{CaffarelliSilvestre2007} and later developed in the geometric setting by
	Chang and Gonz\'alez \cite{CG11} and by Case and
	Chang \cite{CaseChang16}. This extension represents the
	fractional operator as the Dirichlet-to-Neumann
	map of a local degenerate elliptic equation in
	one higher dimension.
	
	Inspired by DelaTorre and González~\cite{DelaTorreGonzalez18}, let
	\(X^{n+1}=M\times(0,2)\) 	be endowed with a metric $\bar g$ associated with a defining function $\rho \in (0,2)$. For a boundary function
	$v$ on $M$, let $V=V(t,\rho)$ denote its extension
	to $X^{n+1}$. In general, for radial functions, the dependence
	on the angular variable disappears, and equation
	\eqref{eq:Lgamma=hartree} is equivalent to the
	corresponding extension equation
	\begin{equation}\label{eq:extension-general}
		\begin{cases}
			-\operatorname{div}_{\bar g}\!\big(\rho^{1-2s}
			\nabla_{\bar g} V\big)
			+ E(\rho)\,V = 0
			& {\rm in}\ (X^{n+1},\bar g),\\[6pt]
			
			V = v
			& {\rm on}\ \{\rho = 0\},\\[6pt]
			
			- d_s \displaystyle\lim_{\rho\to0}
			\rho^{1-2s}\partial_\rho V
			=
			\left(\int_{-\infty}^{+\infty}\mathcal{K}_{-\frac{\alpha}{2}}(t-\tau) v(\tau)^{p_{\ast}}\,\ud \tau\right)
			v(t)^{p_{\ast}-1}
			& {\rm on}\ \{\rho = 0\},
		\end{cases}
	\end{equation}
	where the lower-order term $E(\rho)V$ arises from
	the curvature of the ambient metric and $d_{s} = 2^{1-2s}\Gamma(1-s)/\Gamma(s)$ is a constant depending on $s$.
	
	\medskip
	
	Following \cite{CG11}, one can choose a
	suitable geodesic defining function $\rho^*=\rho^*(\rho)$ so that problem~\eqref{eq:extension-general} can be rewritten on the extension $X^{\ast}=M \times (0,\rho_{0}^{\ast})$, where $\rho_0^{\ast}$ depends on the geometry of $M$, and the lower-order term is eliminated. In terms
	of this normalized defining function, the extension
	problem becomes
	\begin{equation}\label{eq:extension-normalized}
		\begin{cases}
			-\operatorname{div}_{g^*}\!\big((\rho^*)^{1-2s}
			\nabla_{g^*} V\big)=0
			& {\rm in}\ (X^*,g^*),\\[6pt]
			
			V = v
			& {\rm on}\ \{\rho^* = 0\},\\[6pt]
			
			- d_s \displaystyle\lim_{\rho^*\to0}
			(\rho^*)^{1-2s}\partial_{\rho^*}V
			+ c_{n,s}\,v
			=
			\left(\int_{-\infty}^{+\infty}\mathcal{K}_{-\frac{\alpha}{2}}(t-\tau) v(\tau)^{p_{\ast}}\,\ud \tau\right)
			v(t)^{p_{\ast}-1}
			& {\rm on}\ \{\rho^* = 0\},
		\end{cases}
	\end{equation}
	where $g^{\ast}=(\rho^{\ast})^{2}\rho^{-2}\bar{g}$.
	The operator
	\[
	\mathcal L_s v
	=
	- d_s \lim_{\rho^*\to0}
	(\rho^*)^{1-2s}\partial_{\rho^*}V
	+ c_{n,s}\,v
	\]
	defines the Dirichlet-to-Neumann map associated
	with the normalized extension problem.
	
	Under the radial assumption, if we further
	consider periodic solutions satisfying
	$v(t+L)=v(t)$, then the equation \eqref{eq:extension-normalized} reduces to
	\begin{equation}\label{eq:periodic extension-normalized}
		\begin{cases}
			-\operatorname{div}_{g^*}\!\big((\rho^*)^{1-2s}
			\nabla_{g^*} V\big)=0
			& {\rm in}\ (X^*,g^*),\\[6pt]
			
			V = v
			& {\rm on}\ \{\rho^* = 0\},\\[6pt]
			
			- d_s \displaystyle\lim_{\rho^*\to0}
			(\rho^*)^{1-2s}\partial_{\rho^*}V
			+ c_{n,s}\,v
			=
			\left(\int_{0}^{L}\mathcal{K}_{-\frac{\alpha}{2}}^{L}(t-\tau)\, v(\tau)^{p_{\ast}}\, \ud\tau\right)v(t)^{p_{\ast}-1}
			& {\rm on}\ \{\rho^* = 0\}.
		\end{cases}
	\end{equation}
	
	In the subsequent sections, we exploit the
	equivalent extension formulation
	\eqref{eq:periodic extension-normalized} to
	investigate the local regularity of solutions of \eqref{eq:cylinder-Hartree-periodic}.
	\section{Regularity}\label{sec:regularity}
	In this section, we develop the analytical framework required for the variational approach. We first introduce the function spaces associated with the periodic problem and prove the corresponding compact embedding. We then investigate the regularity of weak solutions, which will be used in the proof of the existence of minimizers in the next section.

	\subsection{Function spaces}
	We begin by introducing the weighted Sobolev space associated with the periodic extension problem and establishing the compact trace embedding that will be used in the variational argument.

	\begin{definition}
		We consider the following function space
		\begin{multline*}
			H_L^{s}
			=
			\biggl\{
			v:\mathbb{R}\to\mathbb{R}\; ;\;
			v(t+L)=v(t)
			\ {\rm and} \ \\
			\int_{0}^{L}\!\!\int_{0}^{L}
			\bigl(v(t)-v(\tau)\bigr)^2
			\mathcal{K}_{s}^{L}(t-\tau)\,\ud t\,\ud\tau
			+
			\int_{0}^{L} v(t)^2\,\ud t
			<+\infty
			\biggr\}.
		\end{multline*}
		This space is endowed with the norm
		\begin{equation*}
			\|v\|_{H_L^{s}}
			=
			\left(
			\int_{0}^{L} v(t)^2\,\ud t
			+
			\int_{0}^{L}\!\!\int_{0}^{L}
			|v(t)-v(\tau)|^2
			\mathcal{K}_{s}^{L}(t-\tau)\,\ud t\,\ud\tau
			\right)^{1/2}.
		\end{equation*}
		
		We also introduce the space
		\begin{equation*}
			W_L^{s,p}
			=
			\left\{
			v:\mathbb{R}\to\mathbb{R}\; ;\;
			v(t+L)=v(t)
			\ {\rm and} \
			\|v\|_{L^p(0,L)}^{p}
			+
			\int_{0}^{L}\!\!\int_{0}^{L}
			\frac{|v(t)-v(\tau)|^{p}}
			{|t-\tau|^{1+s p}}
			\,\ud t\,\ud\tau
			<\infty
			\right\},
		\end{equation*}
		endowed with the norm
		\begin{equation*}
			\|v\|_{W_L^{s,p}}
			=
			\left(
			\|v\|_{L^p(0,L)}^{p}
			+
			\int_{0}^{L}\!\!\int_{0}^{L}
			\frac{|v(t)-v(\tau)|^{p}}
			{|t-\tau|^{1+s p}}
			\,\ud t\,\ud\tau
			\right)^{1/p}.
		\end{equation*}
		When $p = 2$, this norm is equivalent to
		\begin{equation*}
			\|v\|_{\widetilde{W}_L^{s,p}}
			=
			\left(
			\|v\|_{L^p(0,L)}^{p}
			+
			\int_{0}^{L}\!\!\int_{0}^{L}
			|v(t)-v(\tau)|^{p}
			\mathcal{K}_{s}(t-\tau)\,\ud t\,\ud\tau
			\right)^{1/p},
		\end{equation*}
		where the kernel $\mathcal{K}_{s}$ is defined in \eqref{Km}.
	\end{definition}
    
    Observe that if $u$ is a positive solution to \eqref{eq:frac Hartree} and having the form \eqref{form of u}, where $v(t+L)=v(t)$, then by computation, we have
    \begin{align}\nonumber
       \int_{1<|x|<e^{L}}u(x)(-\Delta)^{s}u(x)\,\ud x
        =&\int_{1}^{e^{L}}\int_{\mathbb{S}^{n-1}}u(r)(-\Delta)^{s}u(r) r^{n-1}\,\ud \theta\,\ud r\\\nonumber
        =&|\mathbb{S}^{n-1}| \int_{0}^{L} e^{(-\frac{n+2s}{2}-\frac{n-2s}{2}+n-1+1)t}v(t)(\mathcal{L}_{s}^{L}v(t))\,\ud t\\\nonumber
        =&|\mathbb{S}^{n-1}|\int_{0}^{L}v (\mathcal{L}_{s}^{L}v(t))\,\ud t\\
        =&|\mathbb{S}^{n-1}|\frac{\kappa_{n,s}}{2}\int_{0}^{L}\!\!\int_{0}^{L}
			\bigl(v(t)-v(\tau)\bigr)^2
			\mathcal{K}_{s}^{L}(t-\tau)\,\ud t\,\ud\tau
			+
			c_{n,s}\int_{0}^{L} v(t)^2\,\ud t.
    \end{align}
    On the other hand, if $u$ satisfies \eqref{assumption of u}, then as discussed in \S\ref{The original fractional Hartree equation}, for every compact set $K\Subset\mathbb R^n\setminus\{0\} $, we obtain that
    \begin{equation*}
        \int_{K}u \left(-\Delta \right)^{s}u\,\ud x <+\infty.
    \end{equation*}
    Combining this with the periodicity assumption, we obtain
	\begin{equation*}
	    \int_{0}^{L}\!\!\int_{0}^{L}
			\bigl(v(t)-v(\tau)\bigr)^2
			\mathcal{K}_{s}^{L}(t-\tau)\,\ud t\,\ud\tau
			+
			\int_{0}^{L} v(t)^2\,\ud t
			<+\infty.
	\end{equation*}

    The space $H_L^s$ is the natural functional setting for equation \eqref{eq:periodic hartree 2}. On the one hand, the operator $\mathcal{L}_s^L$ is well-defined on $H_L^s$. On the other hand, the above argument shows that if a positive solution to \eqref{eq:frac Hartree} satisfying \eqref{assumption of u} admits an $L$-periodic profile $v$ satisfying \eqref{form of u}, then $v\in H_L^s$. Therefore, it is natural to seek periodic solutions of \eqref{eq:periodic hartree 2} in the space $H^{s}_{L}$.
    
    To develop the variational framework, we next give some compactness properties of the space $H_L^{s}$, which will be key in the sequel.
	\begin{proposition}[Compact embedding]\label{compact embedding}
		Let $n \geqslant 2$, $s \in (0,1)$, and $\alpha \in (0,n)$. The embedding
		\[
		H_L^{s} \hookrightarrow L^{q}(0,L)
		\]
		is compact, where
        \begin{equation}\label{compact of q}
          q \in \left(1,\frac{2}{1-2s}\right)
		\quad {\rm if} \quad s \leqslant \tfrac{1}{2},
		\quad {\rm and} \quad
		q \geqslant 1 \quad {\rm if} \quad s > \tfrac{1}{2}.  
        \end{equation}
	\end{proposition}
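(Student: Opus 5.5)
The plan is to reduce the compact embedding to the classical fractional Sobolev embedding on a bounded interval (equivalently, on the circle $\mathbb{R}/L\mathbb{Z}$).

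\textbf{Step 1: compare the kernels.} I would first show that $H_L^s$ embeds continuously into $W_L^{s,2}$. The periodized kernel satisfies $\mathcal{K}_s^L\geqslant \mathcal{K}_s$, since every term in \eqref{Kml} is positive by Lemma~\ref{Pro of ker}(ii)(a). By Lemma~\ref{Pro of ker}(ii)(c) there is $c>0$ with $\mathcal{K}_s(t)\geqslant c|t|^{-1-2s}$ for $|t|\leqslant L$. Indeed, the asymptotics give this for $|t|\leqslant\delta$, and on $[\delta,L]$ the kernel $\mathcal{K}_s$ is continuous and strictly positive, so it is bounded below there. For $t,\tau\in(0,L)$ we have $|t-\tau|<L$. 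Hence
\[
\int_0^L\!\!\int_0^L\frac{|v(t)-v(\tau)|^2}{|t-\tau|^{1+2s}}\,\ud t\,\ud\tau\leqslant c^{-1}\int_0^L\!\!\int_0^L|v(t)-v(\tau)|^2\mathcal{K}_s^L(t-\tau)\,\ud t\,\ud\tau,
\]
which gives $\|v\|_{W^{s,2}(0,L)}\leqslant C\|v\|_{H_L^s}$.

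\textbf{Step 2: invoke the compact embedding on an interval.} The interval $(0,L)$ is a bounded Lipschitz (extension) domain in dimension one. The Rellich--Kondrachov theorem for fractional Sobolev spaces (e.g.\ Di Nezza--Palatucci--Valdinoci) gives the following.
\begin{itemize}
\item For $s<\tfrac12$, the embedding $W^{s,2}(0,L)\hookrightarrow L^q(0,L)$ is compact for every $q\in[1,\tfrac{2}{1-2s})$.
\item For $s=\tfrac12$, the same holds for every $q<\infty$; this matches the convention $\tfrac{2}{1-2s}=\infty$ in \eqref{compact of q}.
\item For $s>\tfrac12$, $W^{s,2}(0,L)$ embeds continuously into $\mathcal{C}^{0,s-1/2}([0,L])$. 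By Arzel\`a--Ascoli it then embeds compactly into $\mathcal{C}^0([0,L])$, and hence into every $L^q(0,L)$ with $q\geqslant1$, including $q=\infty$.
\end{itemize}
Composing a continuous embedding with a compact one gives compactness of $H_L^s\hookrightarrow L^q(0,L)$ for all $q$ in the stated range. Periodicity plays no further role, because only the restriction to $(0,L)$ enters the $L^q$ norm.

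\textbf{Main obstacle.} The only genuinely nontrivial point is the uniform lower bound on $\mathcal{K}_s$ over the whole range $|t|\leqslant L$, not just near $0$. This rests on continuity and strict positivity of $\mathcal{K}_s$ away from the origin, which is immediate from \eqref{Km}: for $t\neq0$ we have $\cosh t-\langle\theta,\sigma\rangle\geqslant\cosh t-1>0$, so the integrand is bounded and continuous in $t$.

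Alternatively, one could give a self-contained compactness proof via Fourier series on $\mathbb{R}/L\mathbb{Z}$. The quadratic form of $\mathcal{L}_s^L$ has a symbol comparable to $1+|k|^{2s}$. A bounded sequence then has uniformly small high-frequency tails in $L^2$, and compactness follows by interpolation. I would keep this only as a backup to Step 2.
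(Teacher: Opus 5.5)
Your proposal is correct and follows the same route as the paper: first the continuous embedding $H_L^s\hookrightarrow W_L^{s,2}$, obtained by comparing $\mathcal{K}_s^L\geqslant\mathcal{K}_s\gtrsim|t|^{-1-2s}$ for $|t|<L$, and then the fractional Rellich--Kondrachov theorem of Di Nezza--Palatucci--Valdinoci on $(0,L)$. You also supply details the paper only asserts, namely the lower bound on $[\delta,L]$ from continuity and strict positivity of $\mathcal{K}_s$, and the separate treatment of $s=\tfrac12$ and $s>\tfrac12$.
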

	\begin{proof}
		According to \cite{DiNezza2012,MR3694655}, the embedding
		\[
		W^{s,2}_{L} \hookrightarrow L^{q}(0,L)
		\]
		is compact, where $q$ satisfies \eqref{compact of q}. Moreover, from the definition of $\mathcal{K}^{L}_{s}$ and the
		positivity of $\mathcal{K}_{s}$, there exists $C>0$ such that
        \begin{equation*}
          \|v\|_{W_L^{s,2}} \leqslant C\, \|v\|_{H_L^{s}}.  
        \end{equation*}
		Combining this with the compact embedding above, we conclude that
        \begin{equation*}
           H_L^{s} \hookrightarrow L^{q}(0,L) 
        \end{equation*}
		is compact.
	\end{proof}
	Thanks to the compact embedding above, the following nonlocal term is well-defined and satisfies the estimate below.
	\begin{proposition}\label{right hand is well-defined}
		Let $n \geqslant 2$, $s \in (0,1)$, and $\alpha \in (0,n)$. There exists a constant $C>0$ such that for every
		$v \in H_L^{s}$,
		\begin{equation}
			\int_0^L \int_0^L
			\mathcal{K}_{-\frac{\alpha}{2}}^{L}(t-\tau)\,
			|v(\tau)|^{p_*}\,|v(t)|^{p_*}\, \ud\tau\,\ud t
			\leqslant C\, \|v\|_{H_L^{s}}^{2p_{\ast}}.
		\end{equation}
		In particular, equation \eqref{eq:periodic hartree 2} is well-defined in
		$H_L^{s}$.
	\end{proposition}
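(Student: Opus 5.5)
The plan is to combine Young's convolution inequality on the circle $\mathbb{R}/L\mathbb{Z}$ with the compact embedding of Proposition~\ref{compact embedding}. Write $w:=|v|^{p_\ast}$. By Lemma~\ref{Pro of ker} and the definition \eqref{Kml}, the periodic kernel $\mathcal{K}_{-\frac{\alpha}{2}}^{L}$ is nonnegative. It belongs to $L^{1}(0,L)$, and in fact to $L^{r}(0,L)$ for some $r>1$. Indeed, near $t\equiv0$ it behaves like $|t|^{\alpha-1}$, $|\ln|t||$, or a bounded function, according to whether $\alpha<1$, $\alpha=1$, or $\alpha>1$. The translated terms $j\neq0$ of the series form a smooth function bounded by a convergent geometric series, thanks to the exponential decay in (i)(b). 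In particular, for $\alpha<1$ we have $\mathcal{K}^{L}_{-\frac{\alpha}{2}}\in L^{r}$ for every $r<1/(1-\alpha)$.

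The first step is the plain $L^{1}$ bound. Young's inequality on the circle gives
\[
\int_0^L\!\!\int_0^L \mathcal{K}_{-\frac{\alpha}{2}}^{L}(t-\tau)\,w(\tau)w(t)\,\ud\tau\,\ud t\leqslant \|\mathcal{K}^{L}_{-\frac{\alpha}{2}}\|_{L^{r}(0,L)}\|w\|_{L^{a}(0,L)}^{2},\qquad \frac{2}{a}+\frac1r=2 .
\]
Taking $r=1$ and $a=2$ yields the bound $\|\mathcal{K}^L_{-\frac{\alpha}{2}}\|_{L^1}\|v\|_{L^{2p_\ast}(0,L)}^{2p_\ast}$. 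The embedding of Proposition~\ref{compact embedding} then closes the estimate, since it gives $\|v\|_{L^{q}}\leqslant C\|v\|_{H^s_L}$ with $q=2p_\ast$, and this is admissible whenever $s>\tfrac12$, or whenever $2p_\ast<\frac{2}{1-2s}$.

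The main obstacle is the regime $s\leqslant\tfrac12$, where $2p_\ast=\frac{2(n+\alpha)}{n-2s}$ may exceed $\frac{2}{1-2s}$. Here the room must come from the kernel, through an exponent $r>1$, which allows $a=\frac{2r}{2r-1}<2$. The requirement then becomes $p_\ast a<\frac{2}{1-2s}$. The improvement is limited, because $r$ can be large only when $\alpha\geqslant1$, and the kernel is bounded for $\alpha>1$. In that case, taking $r=\infty$ gives $a=1$ and bounds the integral by $\|\mathcal{K}^L\|_\infty\|v\|_{L^{p_\ast}}^{2p_\ast}$. This requires $p_\ast<\frac{2}{1-2s}$, which holds in most cases but not all.

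Since the proposition is stated for all parameters, I would first try to check whether the periodic embedding can be sharpened. A cleaner alternative is to avoid the one-dimensional embedding altogether and pull back to $\mathbb{R}^n$ via \eqref{form of u}. The double integral equals, up to the factor $|\mathbb{S}^{n-1}|$, the Hartree energy $\int\!\int_{A\times\mathbb{R}^n}|x-y|^{\alpha-n}u^{p_\ast}(x)u^{p_\ast}(y)$ over one period annulus $A=\{1<|x|<e^L\}$, by the same change of variables used to derive \eqref{eq:Lgamma=hartree}. I would then apply the Hardy--Littlewood--Sobolev inequality with exponent $\frac{2n}{n+\alpha}$, reducing the bound to $\|u\|_{L^{2n/(n-2s)}}^{2p_\ast}$, which is dilation invariant in the cylinder picture. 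To finish, I would control this periodic critical norm by $\|v\|_{H^s_L}$ through the fractional Sobolev inequality, applied to a cutoff of $u$ over a few periods. The commutator from the cutoff is handled by $\int_0^L v^2$ and the exponential decay of $\mathcal{K}_s$. This HLS route is the one I would commit to, since it is valid for all $s\in(0,1)$ and $\alpha\in(0,n)$. The Young/embedding argument serves as a quick check in the easy range.
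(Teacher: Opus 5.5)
The route you commit to has a gap as stated. After the pullback, the $x$-integral is over one annulus $A$ but the $y$-integral runs over all of $\mathbb{R}^n$. The Hardy--Littlewood--Sobolev inequality therefore bounds the energy by $\|u\|_{L^{2n/(n-2s)}(A)}^{p_\ast}\|u\|_{L^{2n/(n-2s)}(\mathbb{R}^n)}^{p_\ast}$. The second factor is infinite for every nonzero periodic profile, since each annulus $\{e^{jL}<|x|<e^{(j+1)L}\}$ contributes the same amount $|\mathbb{S}^{n-1}|\int_0^L|v|^{\frac{2n}{n-2s}}\,\ud t$.

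To repair this, apply HLS only to $y$ in a few neighbouring annuli and bound the remaining ones separately. In cylinder variables the remainder is the tail $\sum_{|j|\geqslant 2}\mathcal{K}_{-\frac{\alpha}{2}}(t-\tau+jL)$. It is uniformly bounded by the exponential decay in Lemma~\ref{Pro of ker}(i)(b), so its contribution is at most a constant times $\|v\|_{L^{p_\ast}(0,L)}^{2p_\ast}$. Your proposal does not mention this step. The cutoff and commutator argument with the Sobolev inequality on $\mathbb{R}^n$ is also unnecessary. The localized critical norm is a multiple of $\|v\|_{L^{2n/(n-2s)}(0,L)}$, and $\frac{2n}{n-2s}<\frac{2}{1-2s}$ because $n>1$. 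So Proposition~\ref{compact embedding} controls it directly.

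More importantly, the one-dimensional Young route you set aside already works for all parameters, and it is essentially the paper's proof. You abandoned it because of an arithmetic error. The claim that $p_\ast<\frac{2}{1-2s}$ fails in some cases is false: for $s<\frac12$,
\[
2(n-2s)-(n+\alpha)(1-2s)=(n-\alpha)+2s(n+\alpha-2)>0 .
\]
Hence $r=\infty$ closes the estimate for $\alpha>1$, and a large finite $r$ does so for $\alpha=1$.

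For $\alpha<1$ you never checked whether $r<\frac{1}{1-\alpha}$ is enough, and it is. The condition $p_\ast\frac{2r}{2r-1}<\frac{2}{1-2s}$ reads $r>\frac{1}{2-p_\ast(1-2s)}$. This is compatible with $r<\frac{1}{1-\alpha}$ exactly when $p_\ast(1-2s)<1+\alpha$, that is,
\[
(1+\alpha)(n-2s)-(n+\alpha)(1-2s)=(n-1)(\alpha+2s)>0 .
\]

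The paper splits $\mathcal{K}^{L}_{-\frac{\alpha}{2}}$ into near-diagonal, intermediate and tail pieces. Depending on $\alpha$, it lands in one of three spaces:
\begin{itemize}
\item $L^{2p_\ast/(1+\alpha)}$ for $\alpha<1$, via the one-dimensional HLS inequality, which is the weak-type endpoint $r=\frac{1}{1-\alpha}$ of your Young argument;
\item $L^{2n/(n-2s)}$ for $\alpha=1$, via a $|t|^{-\varepsilon}$ bound;
\item $L^{p_\ast}$ for $\alpha>1$.
\end{itemize}
By the computations above, all three exponents lie strictly inside the range of Proposition~\ref{compact embedding}.
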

	\begin{proof}
		By Lemma~\ref{Pro of ker}, for fixed $\alpha \in (0,n)$,
		\begin{equation*}
			\mathcal{K}_{-\frac{\alpha}{2}}(t)
			= O\!\left(e^{-\frac{n-\alpha}{2}|t|}\right)
			\quad {\rm as}\ |t| \to \infty,
		\end{equation*}
		which implies that there exist $M_{L,\alpha} \in \mathbb{Z}^{+}$ and a constant $c_{1}>0$ such that
		\begin{equation}\label{MLalpha}
			\sum_{|j|\geqslant M_{L,\alpha}}
			\mathcal{K}_{-\frac{\alpha}{2}}(t+jL)
			\leqslant c_{1}
			\quad {\rm for\ all}\ |t|\leqslant L.
		\end{equation}
		
		If $\alpha \in (0,1)$, in view of the asymptotic behavior of 
		$\mathcal{K}_{-\frac{\alpha}{2}}$ near the origin, there exists
		$0<\delta_{L,\alpha}<L$ such that
		\begin{equation}\label{asymptotic of Kalpha1}
			\mathcal{K}_{-\frac{\alpha}{2}}(t)
			\leqslant c_{2}|t|^{\alpha-1}
			\quad {\rm for\ all}\ |t|\leqslant \delta_{L,\alpha}.
		\end{equation}
		Combining the above estimates with the definition of
		$\mathcal{K}_{-\frac{\alpha}{2}}^{L}$, we obtain
		\begin{align*}
			\int_0^L \int_0^L
			\mathcal{K}_{-\frac{\alpha}{2}}^{L}(t-\tau)\,
			|v(\tau)|^{p_*}\,|v(t)|^{p_*}\, \ud\tau\,\ud t =&
			\int_0^L \int_0^L
			\sum_{|j|\leqslant 1}
			\mathcal{K}_{-\frac{\alpha}{2}}(t-\tau+jL)\,
			|v(\tau)|^{p_*}\,|v(t)|^{p_*}\, \ud\tau\,\ud t \\
			&+
			\int_0^L \int_0^L
			\sum_{1<|j|<M_{L,\alpha}}
			\mathcal{K}_{-\frac{\alpha}{2}}(t-\tau+jL)\,
			|v(\tau)|^{p_*}\,|v(t)|^{p_*}\, \ud\tau\,\ud t \\
			&+
			\int_0^L \int_0^L
			\sum_{|j|\geqslant M_{L,\alpha}}
			\mathcal{K}_{-\frac{\alpha}{2}}(t-\tau+jL)\,
			|v(\tau)|^{p_*}\,|v(t)|^{p_*}\, \ud\tau\,\ud t \\
			\leqslant &
			\, 3c_{2}
			\int_{0}^{L}\int_{0}^{L}
			|t-\tau|^{\alpha-1}
			|v(\tau)|^{p_*}\,|v(t)|^{p_*}\, \ud\tau\,\ud t \\
			&+
			C
			\bigl(
			\|\mathcal{K}_{-\frac{\alpha}{2}}\|_{L^{\infty}
				[\delta_{L,\alpha},\, M_{L,\alpha}L]}
			+ c_{1}
			\bigr) \\
			&\qquad \times
			\int_{0}^{L}\!\!\int_{0}^{L}
			|v(\tau)|^{p_*}\,|v(t)|^{p_*}\, \ud\tau\,\ud t \\
			\leqslant &
			\, C
			\|v\|_{L^{\frac{2p_*}{1+\alpha}}(0,L)}^{2p_*}
			+
			C
			\|v\|_{L^{p_*}(0,L)}^{2p_*} \\
			\leqslant &
			\, C
			\|v\|_{H_L^{s}}^{2p_*},
		\end{align*}
		where the Hardy--Littlewood--Sobolev inequality
		and Proposition~\ref{compact embedding} have been used.
		
		If $\alpha = 1$, the kernel $\mathcal{K}_{-\frac{1}{2}}$
		has a logarithmic singularity at the origin.
		Hence there exist $0<\delta_{L,1}<\min\{1,L\}$ and $c_3>0$
		such that for any $\varepsilon\in(0,1)$,
		\begin{equation}\label{asymptotic of Kalpha2}
			\mathcal K_{-\frac12}(t)
			\leqslant c_3 |\ln |t|| 
			\leqslant \frac{c_3}{\varepsilon}\, |t|^{-\varepsilon},
			\quad |t|\leqslant \delta_{L,1}.
		\end{equation}
		Arguing as in the case $\alpha\in(0,1)$, we obtain
		\begin{equation}
			\int_0^L \int_0^L
			\mathcal K_{-\frac12}^{L}(t-\tau)
			|v(\tau)|^{p_*}|v(t)|^{p_*}\, \ud\tau\,\ud t
			\leqslant
			\frac{3c_{3}}{\varepsilon}
			\int_0^L \int_0^L
			|t-\tau|^{-\varepsilon}
			|v(\tau)|^{p_*}|v(t)|^{p_*}\, \ud\tau\,\ud t
			+ C \|v\|_{L^{p_*}(0,L)}^{2p_*}.
		\end{equation}
		Choose $0<\varepsilon<1$ such that $\varepsilon \varsigma<1$, where
		\[
		\varsigma=\frac{2n}{n-1}.
		\]
		Then $|t|^{-\varepsilon}\in L^\varsigma(0,L)$, and by H\"older's inequality
		\begin{equation}
			\int_0^L \int_0^L
			|t-\tau|^{-\varepsilon}
			|v(\tau)|^{p_*}|v(t)|^{p_*}\, \ud\tau\,\ud t
			\leqslant
			C \|v\|_{L^{p_*}(0,L)}^{p_*}
			\|v\|_{L^{p_{\ast}\varsigma'}(0,L)}^{p_*}.
		\end{equation}
		Since $p_{\ast}\varsigma' = \frac{2n}{n-2s} < \frac{2}{1-2s}$, the compact embedding in Proposition~\ref{compact embedding} gives
		\[
		\int_0^L \int_0^L
		\mathcal K_{-\frac12}^{L}(t-\tau)
		|v(\tau)|^{p_*}|v(t)|^{p_*}\, \ud\tau\,\ud t
		\leqslant C \|v\|_{H_L^s}^{2p_*}.
		\]
		
		If $\alpha \in (1,n)$, the boundedness of the kernel $\mathcal{K}_{-\frac{\alpha}{2}}$ together with \eqref{MLalpha} implies that
		\begin{equation}
			\int_0^L \int_0^L
			\mathcal{K}_{-\frac{\alpha}{2}}^{L}(t-\tau)\,
			|v(\tau)|^{p_*}\,|v(t)|^{p_*}\, \ud\tau\,\ud t  \leqslant C \|v\|_{L^{p_{\ast}}(0,L)}^{2p_{\ast}}\leqslant C \|v\|_{H_L^{s}}^{2p_{\ast}}
		\end{equation}
	\end{proof}
	Within the framework of the function space defined above, we state the following strong maximum principle; its proof can be found in \cite{MR3694655}.
	\begin{proposition}[Strong maximum principle]\label{maximum principle}
		Let $n \geqslant 2$, $s \in (0,1)$, and $\alpha \in (0,n)$. Let $v \in H_{L}^{s} \cap \mathcal{C}^{0}(\mathbb{R})$ with $v \geqslant 0$ be a solution to
		\begin{equation*}
			\mathcal{L}_{s}v=f(v), \quad{\rm in}\ \mathbb{R}
		\end{equation*}
		where $f$ satisfies $f(v)\geqslant 0$ if $v \geqslant 0$. Then $v>0$ or $v \equiv0$. Here, $f(v)$ denotes a general nonnegative right-hand side; in our application, $f$ is the nonlocal term \eqref{f}.
	\end{proposition}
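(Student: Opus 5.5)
The plan is to argue by contradiction at a hypothetical interior minimum point, using only the kernel representation of $\mathcal{L}_s$ and the positivity of $\mathcal{K}_s$ from Lemma~\ref{Pro of ker}(ii)(a).

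First, suppose $v\geqslant 0$ does not vanish identically. Assume, for contradiction, that $v(t_0)=0$ for some $t_0\in\mathbb{R}$. Since $v$ is continuous and $L$-periodic, $t_0$ is a global minimum point of $v$.

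Next, evaluate the equation at $t_0$. The right-hand side satisfies $f(v)(t_0)\geqslant 0$. In our application $f(v)=\bigl(\mathcal{K}^L_{-\alpha/2}*v^{p_*}\bigr)v^{p_*-1}$, which in fact vanishes at $t_0$ because $p_*>1$. On the left-hand side we have
\[
\mathcal{L}_s v(t_0)=\kappa_{n,s}\,\mathrm{P.V.}\int_{\mathbb{R}}\bigl(v(t_0)-v(\tau)\bigr)\mathcal{K}_s(t_0-\tau)\,\ud\tau+c_{n,s}v(t_0)=-\kappa_{n,s}\int_{\mathbb{R}} v(\tau)\,\mathcal{K}_s(t_0-\tau)\,\ud\tau .
\]
The integrand is now nonnegative, so the principal value is simply a (possibly $+\infty$) Lebesgue integral. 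Because $\mathcal{K}_s>0$ everywhere and $v\geqslant0$ is continuous and not identically zero, $v$ is positive on an open set. Hence the integral is strictly positive, and $\mathcal{L}_s v(t_0)<0\leqslant f(v)(t_0)$, which is the desired contradiction.

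The main obstacle is justifying the pointwise evaluation of $\mathcal{L}_s v$ at $t_0$, since $v$ is only assumed to lie in $H^s_L\cap\mathcal{C}^0$ and the equation may hold only weakly. I would handle this in one of two ways.

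The first option is to test the weak formulation against a nonnegative periodic bump $\varphi_\varepsilon$ concentrated near $t_0$. This gives
\[
\frac{\kappa_{n,s}}{2}\iint\bigl(v(t)-v(\tau)\bigr)\bigl(\varphi_\varepsilon(t)-\varphi_\varepsilon(\tau)\bigr)\mathcal{K}^L_s\,\ud t\,\ud\tau+c_{n,s}\int v\varphi_\varepsilon=\int f\varphi_\varepsilon\geqslant0 .
\]
The left side should be dominated by $-\int\!\int_{\tau\text{ away from }t_0}v(\tau)\mathcal{K}^L_s(t-\tau)\varphi_\varepsilon(t)$ plus terms that are $o(1)$ by continuity of $v$ at its zero. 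Normalising $\int\varphi_\varepsilon=1$, this term stays bounded away from zero, which again contradicts the nonnegativity of the right side.

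The second, cleaner option is to use the extension \eqref{eq:periodic extension-normalized}. The extension $V$ is a nonnegative, nontrivial solution of $-\operatorname{div}\bigl((\rho^*)^{1-2s}\nabla V\bigr)=0$, so by the interior strong maximum principle $V>0$ in $X^*$. At a boundary zero $t_0$, the Hopf lemma for degenerate weights (as in Cabr\'e--Sire) gives
\[
-\lim_{\rho^*\to0}(\rho^*)^{1-2s}\partial_{\rho^*}V(t_0,\rho^*)<0 .
\]
Since $c_{n,s}v(t_0)=0$, this means $\mathcal{L}_s v(t_0)<0\leqslant f(v)(t_0)$, the same contradiction.

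I would present the first route, with the second as a remark, following \cite{MR3694655}.
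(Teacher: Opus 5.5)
The evaluation at a zero that you write down is exactly the argument of \cite{MR3694655}, to which the paper defers without giving a proof of its own. If $v(t_0)=0$, the integrand $(v(t_0)-v(\tau))\mathcal{K}_s(t_0-\tau)=-v(\tau)\mathcal{K}_s(t_0-\tau)$ is nonpositive, so $\mathcal{L}_s v(t_0)=-\kappa_{n,s}\int_{\mathbb{R}}v(\tau)\mathcal{K}_s(t_0-\tau)\,\ud\tau<0\leqslant f(t_0)$. You are right that this presupposes that $\mathcal{L}_s v(t_0)$ exists pointwise and equals $f(t_0)$, which is not automatic for $v\in H^s_L\cap\mathcal{C}^0$. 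However, the route you say you would present does not supply this.

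In Route~1, continuity of $v$ at $t_0$ only controls the part of the bilinear form with $|t-\tau|\geqslant\delta$, which does tend to $-\kappa_{n,s}\int_{|t_0-\tau|\geqslant\delta}v(\tau)\mathcal{K}_s(t_0-\tau)\,\ud\tau$. The near-diagonal part $\frac{\kappa_{n,s}}{2}\iint_{|t-\tau|<\delta}(v(t)-v(\tau))(\varphi_\varepsilon(t)-\varphi_\varepsilon(\tau))\mathcal{K}^L_s(t-\tau)\,\ud t\,\ud\tau$ is not $o(1)$. With $\int\varphi_\varepsilon=1$, the Gagliardo seminorm of $\varphi_\varepsilon$ is of order $\varepsilon^{-1/2-s}$, so Cauchy--Schwarz only gives $o(1)\,\varepsilon^{-1/2-s}$. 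Already for $v=|t-t_0|^{\gamma}$ near $t_0$, with $\gamma<2s$ close to $2s$, a scaling computation shows this term is of exact order $\varepsilon^{\gamma-2s}$. What you need is that its $\limsup$ as $\varepsilon\to0$ is $\leqslant0$, and that is the very pointwise statement at $t_0$ you set out to justify.

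Route~2 has the same defect. The Hopf lemma of \cite{CabreSire2014} gives a genuine conormal derivative at $t_0$ only if $(\rho^*)^{1-2s}\partial_{\rho^*}V$ is continuous up to $\{\rho^*=0\}$; otherwise you only get $V(t_0,\rho^*)\geqslant c\,(\rho^*)^{2s}$ for small $\rho^*$. Meanwhile the Neumann condition is known only in the weak sense, so comparing the two at the single point $t_0$ again requires regularity you have not established.

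The proposition is used only in Claim~5 of the proof of Lemma~\ref{lem:existence-minimizer}, and there the gap closes with results already in the paper, because $v\in L^\infty\cap\mathcal{C}^{\beta}$ is known at that point.
\begin{itemize}
\item Since $v^{p_*}\in\mathcal{C}^{\beta}$, you get $B=\mathcal{K}_{-\frac{\alpha}{2}}*v^{p_*}\in\mathcal{C}^{\beta}$ exactly as in the case $k=0$ of the proof of Proposition~\ref{f regularity}, which does not use positivity.
\item The inequality $|a^{q}-b^{q}|\leqslant|a-b|^{q}$ for $a,b\geqslant0$ and $0<q\leqslant1$ gives $v^{p_*-1}\in\mathcal{C}^{\beta\min\{1,p_*-1\}}$.
\item Hence $f\in\mathcal{C}^{\beta'}$ with $\beta'=\beta\min\{1,p_*-1\}$, and Proposition~\ref{regularity improvement} gives $v\in\mathcal{C}^{2s+\beta'}$.
\item For such $v$, the integral defining $\mathcal{L}_s v$ (symmetrized if $s\geqslant\frac12$) converges absolutely and is continuous in $t$. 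It therefore coincides with $f$ everywhere, and your evaluation at $t_0$ becomes rigorous.
\end{itemize}

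For the proposition as stated, with an arbitrary nonnegative $f$ and only $v\in H^s_L\cap\mathcal{C}^0$, no concentrated-bump computation of this kind will work. You need a genuine strong maximum principle for weak supersolutions. One option is a weak Harnack inequality for the even reflection of the extension $V$ across $\{\rho^*=0\}$, in which $c_{n,s}v$ appears as a nonnegative boundary potential, in the spirit of \cite{FabesKenigSerapioni1982,CabreSire2014}.
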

	\subsection{$L^\infty$ regularity}
	In the following Proposition~\ref{L infty for s<1/2}, we prove the $L^\infty$ regularity,
	using the equivalent characterization of $\mathcal{L}_s$ as a Dirichlet-to-Neumann operator
	for problem \eqref{eq:periodic extension-normalized}. First, we fix some notation that will be used below. For $0<R<\rho_{0}^{\ast}$, we denote
	\begin{align*}
		B_R^{+}
		&=
		\left\{ (t,\rho^{*}) \in \mathbb{R}^{2} : \rho^{*} > 0,\; |(t,\rho^{*})| < R \right\}, \\
		\Gamma_R^{0}
		&=
		\left\{ (t,0) \in \partial \mathbb{R}^{2}_{+} : |t| < R \right\}.
	\end{align*}

	\begin{proposition}\label{L infty for s<1/2}
		Let $n \geqslant 2$, $s \in (0,1/2)$, and $\alpha \in (\alpha_{\ast},n)$. Let $V=V(t,\rho^{\ast})$ be a nonnegative weak solution to the problem
		\begin{equation}\label{eq:prop3.4-extension}
			\begin{cases}
				-\operatorname{div}_{g^*}\!\big((\rho^*)^{1-2s}
				\nabla_{g^*} V\big)=0
				& {\rm in}\ (B_{R}^{+},g^{\ast}),\\[6pt]

				- d_s \displaystyle\lim_{\rho^*\to0}
				(\rho^*)^{1-2s}\partial_{\rho^*}V
				+ c_{n,s}\,v
				=
				B(t)v(t)^{p_{\ast}-1}
				& {\rm on}\ \Gamma_{R}^{0},
			\end{cases}
		\end{equation}
		where $v(t) = V(t,0)$ is $L$-periodic, $R>L$ is fixed, $p_* > 1$, and
    \begin{equation}\label{eq:degiorgi-B}
        B(t) := \int_0^L \mathcal{K}_{-\frac{\alpha}{2}}^L(t-\tau)\,
	v(\tau)^{p_*} \, \ud\tau.
    \end{equation}
	If
	\begin{equation}\label{degiorgi-alpha choice}
	\int_{0}^{L}|v|^{\frac{2}{1-2s}} \, \ud t := \zeta<+\infty,
	\end{equation}
	then $v \in L^\infty(0,L)$, and there exists a constant $C > 0$ depending on $n, s, \alpha, L$, and $\zeta$ such that
	\begin{equation}\label{eq:degiorgi-conclusion}
		\|v\|_{L^\infty(0,L)} \leqslant C.
	\end{equation}
	\end{proposition}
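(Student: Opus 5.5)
We plan to run a De Giorgi truncation scheme on the extension problem \eqref{eq:prop3.4-extension}. Write the boundary condition as $-d_s\lim(\rho^*)^{1-2s}\partial_{\rho^*}V = a(t)\,v$, with the linearized potential $a(t):=B(t)v^{p_*-2}(t)-c_{n,s}$. The argument then has two parts: first show that $a\in L^{q}(0,L)$ for some $q>\frac{1}{2s}$, and then apply the standard boundary De Giorgi iteration for degenerate Neumann problems with a potential in a supercritical Lebesgue space. The trace dimension is $1$, so the relevant Sobolev exponent is $2^*=\frac{2}{1-2s}$. Since $v$ is $L$-periodic and $R>L$, a local $L^\infty$ bound on $\Gamma^0_{R/2}$ covers a full period, which gives the global bound \eqref{eq:degiorgi-conclusion}.

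\textbf{Step 1: integrability of the coefficient.} By \eqref{degiorgi-alpha choice} we have $v^{p_*}\in L^{2/((1-2s)p_*)}$. Lemma~\ref{Pro of ker} gives $\mathcal K^L_{-\alpha/2}\in L^{r}(0,L)$ for every $r<\frac{1}{1-\alpha}$ when $\alpha<1$, and for every $r<\infty$ when $\alpha\geqslant1$. Young's inequality for periodic convolutions therefore puts $B$ in $L^{\beta}$ with
\[
\tfrac1\beta=\tfrac1r+\tfrac{(1-2s)p_*}{2}-1 .
\]
Hölder's inequality then places $Bv^{p_*-2}$ in $L^{q}$ with $\frac1q=\frac1\beta+\frac{(1-2s)(p_*-2)}{2}$. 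If $p_*<2$, the factor $v^{p_*-2}$ is not controlled by \eqref{degiorgi-alpha choice}. In that case we truncate instead: we test with $(v-k)_+$ and use $v^{p_*-1}\leqslant k^{p_*-2}v$ on $\{v\geqslant k\}$, which only requires $B\in L^q$. The condition $q>\frac1{2s}$, together with the admissibility of $r$ (the exponent called $q'$ in the introduction), reduces to an inequality in $(n,s,\alpha)$. We expect this inequality to be exactly $\alpha>\alpha_*$ as defined in \eqref{eq:alpha-star}, and we would check that algebra explicitly. This is the step where the hypothesis enters, and it is the main obstacle. The exponent bookkeeping has to produce precisely $\alpha_*$, and the cases $\alpha<1$, $\alpha=1$, $\alpha>1$ and $p_*\gtrless2$ must be handled separately.

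\textbf{Step 2: Caccioppoli inequality.} Let $\eta$ be a cutoff on $B_R^+$ and test the weak formulation with $\eta^2(V-k)_+$. The metric $g^*$ is smooth and uniformly equivalent to the Euclidean metric near $\rho^*=0$, so the bulk term is comparable to $\int(\rho^*)^{1-2s}|\nabla(\eta(V-k)_+)|^2$ minus the usual $|\nabla\eta|^2$ error. The term $c_{n,s}v$ has a favorable sign and can be dropped. The boundary term is bounded by
\[
\int_{\Gamma^0}|a|\,\eta^2 v (v-k)_+ \leqslant \|a\|_{L^q}\,\|\eta(v-k)_+\|^2_{L^{2q'}}\,+\,k^2\|a\|_{L^q}|A_k|^{1-1/q}.
\]
Here $A_k=\{v>k\}\cap\Gamma^0$, and $2q'<2^*$ because $q>\frac1{2s}$. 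We interpolate $L^{2q'}$ between $L^2$ and $L^{2^*}$. The weighted trace inequality (as in \cite{CaffarelliSilvestre2007,CabreSire2014}) absorbs the $L^{2^*}$ piece, and this is possible on small balls, or by splitting $a$ into a bounded part plus a part with small $L^q$ norm.

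\textbf{Step 3: iteration.} Take radii $r_j\downarrow R/2$ and levels $k_j=K(1-2^{-j})$, and set $Y_j=\int_{\Gamma^0_{r_j}}(v-k_j)_+^2+\int_{B^+_{r_j}}(\rho^*)^{1-2s}(V-k_j)_+^2$. The Caccioppoli inequality, the trace Sobolev inequality, and Chebyshev's bound $|A_{k_{j+1}}|\leqslant Y_j/(k_{j+1}-k_j)^2$ combine to give $Y_{j+1}\leqslant C b^j K^{-2\epsilon}Y_j^{1+\epsilon}$, where $\epsilon=\min\{\frac{2s}{1-2s},1-\frac1q\}-\frac1q$-type and positive by Step 1. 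Choosing $K$ large in terms of $Y_0\lesssim\zeta$ and $\|a\|_{L^q}$ forces $Y_j\to0$, hence $v\leqslant K$ on $\Gamma^0_{R/2}$. Periodicity then yields the bound \eqref{eq:degiorgi-conclusion}, with a constant depending on $n,s,\alpha,L,\zeta$.
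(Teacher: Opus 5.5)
Your argument is correct and yields exactly the paper's range $\alpha>\alpha_*$, but it is organized differently from the paper's proof.

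The paper does not linearize. It tests with $W_k=(V-k)_+$ (no cutoff) and splits $v^{p_*-1}w_k\leqslant C(w_k^{p_*}+k^{p_*-1}w_k)$ on $A_k$. It then bounds $\int_{A_k}Bw_k^{p_*}$ and $\int_{A_k}Bw_k$ by H\"older with a common dual exponent $q'$. Finally it iterates the purely boundary quantity $\psi(k)=\|w_k\|_{L^{2/(1-2s)}}^{2/(1-2s)}$ via the trace Sobolev inequality and Chebyshev, obtaining $\psi(h)\leqslant C(h,k)\,\psi(k)^{1/(q'(1-2s))}$ for $h>k$.

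You instead freeze $Bv^{p_*-2}$ as a potential (Brezis--Kato) and invoke the standard localized boundary De Giorgi lemma for potentials in $L^q$, $q>\frac{1}{2s}$. Both routes hinge on the same fact. The paper's superlinearity condition $q'<\frac{1}{1-2s}$ says precisely that $B\in L^{q}$ with $q=(q')'>\frac{1}{2s}$. Its extra constraint $p_*q'\leqslant\frac{2}{1-2s}$ for the $w_k^{p_*}$ term plays the role of your H\"older step on $v^{p_*-2}$ when $p_*\geqslant 2$, and in neither version does it shrink the admissible range.

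The algebra you left open does close:
\begin{itemize}
\item For $p_*<2$ and $\alpha<1$, letting $\frac{1}{r}\downarrow 1-\alpha$, the condition $\frac{1}{r}+\frac{(1-2s)p_*}{2}-1<2s$ becomes $(1-2s)(n+\alpha)<2(n-2s)(\alpha+2s)$. This is $\alpha(2n-1-2s)>n(1-6s)+8s^2$, i.e.\ exactly $\alpha>\alpha_*$.
\item For $p_*\geqslant 2$ you get $\frac{1}{q}=\frac{1}{r}+(1-2s)(p_*-1)-1$, and $\frac{1}{q}<2s$ reduces to $1-2s<n-2s$, which always holds.
\item For $\alpha\geqslant 1$ no condition arises.
\end{itemize}

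What each route buys: yours gives modularity and a genuinely local Caccioppoli inequality. The paper's untruncated test over $B_R^+$ tacitly relies on the global periodic extension to discard lateral boundary terms. In exchange, the paper's iteration produces a constant depending only on boundary data.

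That last point is where your write-up needs a patch. Your $Y_0$ contains $\int_{B_R^+}(\rho^*)^{1-2s}V^2$, which is not controlled by $\zeta$ for a general local solution, so ``$Y_0\lesssim\zeta$'' is not justified as stated. You have two options:
\begin{itemize}
\item Record an extra dependence of the constant on the weighted $L^2$ norm of $V$.
\item Use that $V$ is the extension of a periodic solution. Testing \eqref{eq:periodic hartree 2} with $v$ gives $\mathcal{E}_L(v)=\int_0^L Bv^{p_*}\,\ud t\leqslant C(\zeta)$ by the same Young--H\"older bookkeeping. This works for every $\alpha\in(0,n)$, because $p_*(1-2s)<1+\min\{\alpha,1\}$. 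The resulting bound controls the weighted energy and $L^2$ norm of the extension on $B_R^+$.
\end{itemize}

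Also, for $p_*<2$ your levels $k_j=K(1-2^{-j})$ start at $k_0=0$, where $k^{p_*-2}$ blows up. Either use $v^{p_*-1}\leqslant 1+v$, so that both the potential and the source term equal $B$, or start the levels at $K/2$.
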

  
	\begin{proof}
    The proof is based on the classical De Giorgi truncation method. It is carried out within the extension framework of Cabr\'e and Sire~\cite{CabreSire2014}, relying on the degenerate elliptic theory for $A_2$ weights developed by Fabes, Kenig, and Serapioni~\cite{FabesKenigSerapioni1982}. For the classical De Giorgi method, we refer to Giusti~\cite[Chapter~7]{Giusti2003}; see also Di Castro, Kuusi, and Palatucci~\cite{DiCastroKuusiPalatucci2014} for related developments in the nonlocal setting.
    
		After flattening the boundary and using local coordinates,
		problem \eqref{eq:prop3.4-extension} is locally equivalent to the Euclidean
		weighted extension problem (the curvature terms arising from the metric $g^*$ produce lower-order perturbations near $\rho^* = 0$ that are absorbed into the iteration; cf.\ \cite[Proposition~3.4]{MR3694655})
		\begin{equation}\label{eq:degiorgi-extension}
		\begin{cases}
			-\operatorname{div}(y^{a}\nabla V)=0
			& {\rm in}\ B_{R}^{+},\\[6pt]
			- y^{a}\partial_y V
			+ c_{n,s}\,v
			=
			B(t)\, v^{p_{\ast}-1}
			& {\rm on}\ \Gamma_{R}^{0},
		\end{cases}
	\end{equation}
    where $a:=1-2s$.
	
    We divide the proof into several claims.

\medskip

\noindent{\bf Claim~1} {\rm (Truncation setup):} {\it The test function $W_k$ satisfies}
\begin{align}\label{eq:degiorgi-weak}
	\int_{B_{R}^+} y^a \left| \nabla W_k \right|^{2}\, \ud t \, \ud y
	&= \int_{A_k} \left( B(t) v^{p_*-1} - c_{n,s} v \right) w_k \, \ud t,
\end{align}

\noindent{\it Proof.}
For $k \geqslant 0$, we define the truncations
\[
w_k := (v - k)_+ = \max\{v - k, 0\}, \qquad W_k := (V - k)_+ = \max\{V - k, 0\},
\]
and the superlevel set
\[
A_k := \{(t,0) \in \Gamma_{R}^{0} : v(t) > k\}.
\]
Since $V$ is a nonnegative weak solution to \eqref{eq:degiorgi-extension}, we test the equation with $W_k$. This is an admissible test function: indeed, $W_k = (V-k)_+ \in W^{1,2}(y^a, B_{R}^+)$ since truncation preserves weighted Sobolev regularity. On $\Gamma_{R}^0$, the trace of $W_k$ equals $w_k$, which is supported on $A_k$. Testing the weak formulation of \eqref{eq:degiorgi-extension} with $W_k$ and using that $\nabla W_k = \nabla V$ on $\{V > k\}$ and $W_k = 0$ on $\{V \leqslant k\}$, we obtain \eqref{eq:degiorgi-weak}.

\medskip

\noindent{\bf Claim~2} {\rm (Nonlinear bound):} For any $k > 0$, the nonlinear term satisfies
\begin{equation}\label{eq:degiorgi-nonlinear-split}
	v^{p_*-1} w_k \leqslant C_p \left( w_k^{p_*} + k^{p_*-1} w_k \right) \quad {\rm on} \quad A_k.
\end{equation}

\noindent{\it Proof.}
On the set $A_k\subset \Gamma_{R}^{0}$ with $k > 0$, we write $v = w_k + k$ and use the elementary inequality
\[
(a + b)^{p_*-1} \leqslant 2^{(p_*-2)_+} \left( a^{p_*-1} + b^{p_*-1} \right), \qquad a,b \geqslant 0,
\]
which holds for all $p_* > 1$. We stress that no restriction $p_* \geqslant 2$ is needed here, since $v > k > 0$ on $A_k$ and there is no singularity at $v = 0$. Applying this with $a = w_k$ and $b = k$ and multiplying both sides by $w_k \geqslant 0$, we obtain
\[
v^{p_*-1} w_k = (w_k + k)^{p_*-1} w_k \leqslant 2^{(p_*-2)_+}\left( w_k^{p_*} + k^{p_*-1} w_k \right),
\]
which is \eqref{eq:degiorgi-nonlinear-split} with $C_p = 2^{(p_*-2)_+}$.

\medskip

Combining \eqref{eq:degiorgi-nonlinear-split} with the fact that $-c_{n,s}\int_{A_k} v\,w_k \leqslant 0$ on the right-hand side of \eqref{eq:degiorgi-weak}, we obtain
\begin{equation}\label{eq:degiorgi-weak with I1 I2}
    \int_{B_{R}^+} y^a \left| \nabla W_k \right|^{2}\, \ud t \, \ud y
	\leqslant C \int_{A_k}   B(t) w_{k}^{p_*} \, \ud t+C\int_{A_k}B(t)k^{p_{\ast}-1}w_{k}  \,\ud t =: C(I_{1}+k^{p_{\ast}-1}I_{2}).
\end{equation}

\medskip
\noindent{\bf Claim~3} {\rm (Estimate of $I_{1}$ and $I_{2}$):} For any $q_1'$ and $q_2'$ satisfying the conditions specified below,
\begin{equation}\label{eq:degiorgi I1 estimate}
	I_1
\leqslant
C
|A_k|^{\frac{1}{q_1'}-\frac{p_*(1-2s)}{2}}
\|w_k\|_{L^{\frac{2}{1-2s}}(A_{k})}^{p_*},
\end{equation}
and
\begin{equation}\label{eq:degiorgi I2 estimate}
	I_2
\leqslant
C
|A_k|^{\frac1{q_2'}-\frac{1-2s}{2}}
\|w_k\|_{L^{\frac2{1-2s}}(A_k)}.
\end{equation}

\noindent{\it Proof.}
We first estimate $I_{1}$. By H\"older's inequality,
\begin{equation}\label{eq:degiorgi-I1 holder}
    I_1
\leqslant
C
\|B\|_{L^{q_1}(0,L)}
\|w_k^{p_*}\|_{L^{q_1'}(A_k)},
\end{equation}
where $q_1>1$ and $q_1'>1$ satisfy $
\frac1{q_1}+\frac1{q_1'}=1$, and $C$ depends only on $L$ and $R$. Since $B$ is defined by \eqref{eq:degiorgi-B}, Young's convolution inequality and Proposition~\ref{compact embedding} yield
\begin{equation*}
    \|B\|_{L^{q_1}(0,L)}
\leqslant
\|\mathcal{K}_{-\frac{\alpha}{2}}^{L}\|_{L^{r}(0,L)}
\|v^{p_*}\|_{L^{p}(0,L)}
\end{equation*}
provided that $\frac1{q_1}
=
\frac1r+\frac1p-1$ and $pp_{*}\leqslant
\frac{2}{1-2s}$.

If $0<\alpha<1$, then by Lemma~\ref{Pro of ker},
\begin{equation*}
    \mathcal{K}_{-\frac{\alpha}{2}}^{L}\in L^{r}(0,L),
\quad
1\leqslant r<\frac1{1-\alpha}.
\end{equation*}
Combined with $\frac1{q_1'}
=
2-\frac1r-\frac1p$ and $pp_{*} \leqslant \frac{2}{1-2s}$, we have
\begin{equation*}
    q_1'>
\frac1{1+\alpha-\frac{p_*(1-2s)}2}.
\end{equation*}
On the other hand, we assume that $p_*q_1'
\leqslant
\frac{2}{1-2s}$, or equivalently, $q_1'
\leqslant
\frac{2}{p_*(1-2s)}$, so that H\"older's inequality can be applied in the estimate of $\|w_k\|_{L^{p_*q_1'}(A_k)}$. Therefore, throughout the following argument, we assume that
\begin{equation*}
    \max\left\{
1,
\frac1{1+\alpha-\frac{p_*(1-2s)}2}
\right\}
<
q_1'
\leqslant
\frac2{p_*(1-2s)},
\end{equation*}
where the nonemptiness of the above interval is verified in Claim~4. 

If $1\leqslant\alpha<n$, then by Lemma~\ref{Pro of ker},
\begin{equation*}
    \mathcal{K}_{-\frac{\alpha}{2}}^{L}\in L^{r}(0,L)
\end{equation*}
for every $1\leqslant r<\infty$ (and $r=\infty$ if $\alpha>1$). Hence, for any $q_{1}'$ satisfying
\[
1<q_1'
\leqslant
\frac2{p_*(1-2s)},
\]
the estimate \eqref{eq:degiorgi-I1 holder} holds.

Consequently,
\begin{equation*}
    \|B\|_{L^{q_1}(0,L)}
\leqslant
C.
\end{equation*}
Since $p_*q_1'
\leqslant
\frac2{1-2s}$, H\"older's inequality implies
\begin{equation*}
    \|w_k^{p_*}\|_{L^{q_1'}(A_k)}
\leqslant
|A_k|^{\frac1{q_1'}-\frac{p_*(1-2s)}2}
\|w_k\|_{L^{\frac2{1-2s}}(A_k)}^{p_*},
\end{equation*}
and hence 
\begin{equation*}
    I_1
\leqslant
C
|A_k|^{\frac1{q_1'}-\frac{p_*(1-2s)}2}
\|w_k\|_{L^{\frac2{1-2s}}(A_k)}^{p_*}
\end{equation*}
for any $q_{1}'$ satisfying
\begin{equation}\label{degiorgi-q_{1}'in claim 3}
  \max\left\{
1,
\frac1{1+\alpha-\frac{p_*(1-2s)}2}
\right\}
<
q_1'
\leqslant
\frac2{p_{\ast}(1-2s)}.
\end{equation}
Indeed, if $\alpha\geqslant1$, then $1+\alpha-\frac{p_*(1-2s)}2>1$, and therefore $\max\left\{
1,
\frac{1}{1+\alpha-\frac{p_*(1-2s)}2}
\right\}=1$.

We next estimate $I_2$. Arguing as in the estimate of $I_1$, we conclude that
the estimate \eqref{eq:degiorgi I2 estimate}
holds for any $q_{2}'$ satisfying
\begin{equation}\label{degiorgi-q_{2}'in claim 3}
   \max\left\{
1,
\frac1{1+\alpha-\frac{p_*(1-2s)}2}
\right\}
<
q_2'
\leqslant
\frac2{1-2s}.
\end{equation}

\medskip

For $k \geqslant 0$, set
\begin{equation*}
    \psi(k) := \left\|w_k\right\|_{L^{2/(1-2s)}(A_{k})}^{2/(1-2s)} \quad {\rm and} \quad \varphi(k) := |A_k|.
\end{equation*}
\noindent{\bf Claim~4} {\rm (De Giorgi iteration):} {\it For fixed $k_{0}>0$, there exists $d > 0$ such that $\psi(k_0 + d) = 0$ and the $L^\infty$-estimate holds}
\[
\|v\|_{L^\infty(0,L)} \leqslant k_0 + d.
\]

\noindent{\it Proof.} The trace Sobolev embedding in \cite[Corollary 5.3]{MR3148060} gives
\[
\psi(k)^{1-2s}=\left\| w_k\right\|_{L^{2/(1-2s)}(A_{k})}^2 \leqslant C_S \int_{B_{R}^+} y^a |\nabla W_k|^2 \, \ud t \, \ud y,
\]
which combined with \eqref{eq:degiorgi-weak with I1 I2} and Claim 3 yields
\begin{equation}
    \psi(k)^{1-2s} \leqslant C\varphi(k)^{\frac{1}{q_1'}-\frac{p_*(1-2s)}{2}}
\psi(k)^{\frac{p_*(1-2s)}{2}}+Ck^{p_{*}-1}\varphi(k)^{\frac1{q_2'}-\frac{1-2s}{2}}
\psi(k)^{\frac{1-2s}{2}}
\end{equation}
for any $q_{1}'$ and $q_{2}'$ satisfying \eqref{degiorgi-q_{1}'in claim 3} and \eqref{degiorgi-q_{2}'in claim 3}, respectively. By Chebyshev's inequality, one has
\begin{equation*}
\varphi(h) \leqslant \frac{1}{(h-k)^{2/(1-2s)}} \psi(k) \quad {\rm for\ all}\ h > k.
\end{equation*}
Hence, for $h>k$, we have
\begin{equation}\label{eq:degiorgi-psik^1-2s}
    \psi(h)^{1-2s} \leqslant C\frac{\psi(k)^{\frac{1}{q_{1}'}}}{(h-k)^{\gamma_{1}}}+C\frac{h^{p_{*}-1}\psi(k)^{\frac{1}{q_{2}'}}}{(h-k)^{\gamma_{2}}},
\end{equation}
where 
\begin{equation}
    \gamma_{1}=\left(\frac{2}{1-2s}\right)\left(\frac{1}{q_{1}'}-\frac{p_{*}(1-2s)}{2}\right)  \quad \text{ and } \quad \gamma_{2}=\left(\frac{2}{1-2s}\right)\left(\frac{1}{q_{2}'}-\frac{1-2s}{2}\right).
\end{equation}

Next, we choose $q_1'$ and $q_2'$ satisfying 
\begin{equation}\label{degiorgi-q_{1}'in claim 4}
   \max\left\{
1,
\frac1{1+\alpha-\frac{p_*(1-2s)}2}
\right\}
<
q_1'=q_{2}'
<
\min \left\{\frac1{1-2s},\frac{2}{p_{*}(1-2s)} \right\}.
\end{equation}
Such a choice is possible under the assumption \eqref{degiorgi-alpha choice} on $\alpha$, and the above conditions are stronger than those required in Claim~3. Indeed, if $n-4s \leqslant \alpha<n$ (equivalently, $p_{*} \geqslant 2$), then $\min \left\{\frac1{1-2s},\frac{2}{p_{*}(1-2s)} \right\}=\frac{2}{p_{*}(1-2s)}$. Since $n\geqslant2$ and $0<s<\frac12$, $\frac2{p_*(1-2s)}>1$. Moreover,
\begin{equation*}
    \frac1{1+\alpha-\frac{p_*(1-2s)}2}
<
\frac2{p_*(1-2s)}.
\end{equation*}
Therefore,
\begin{equation}\label{degiorgi-nonempty as alpha>n-4s}
    \max\left\{
1,
\frac1{1+\alpha-\frac{p_*(1-2s)}2}
\right\}
<
\frac2{p_*(1-2s)}, \quad \text{ if } \quad n-4s\leqslant\alpha<n.
\end{equation}
Now we consider $\max\left\{
0,\,
\alpha_{\ast}\right\}<\alpha<n-4s$. First, we observe that $\alpha_{\ast}<n-4s$. Indeed, on the one hand
\begin{equation*}
    (n-4s)-\alpha_{\ast}=
\frac{n(2n-3-2s)+4s}{2n-1-2s}>0,
\end{equation*}
since $n\geqslant2$ and $0<s<\frac12$. Since $\alpha<n-4s$, we have $1<p_{*}<2$, which implies that 
\[
\min \left\{\frac1{1-2s},\frac{2}{p_{*}(1-2s)} \right\}=\frac{1}{1-2s}.
\]
On the other hand, by the assumption, it holds 
\(\alpha>
\alpha_{\ast}\), from which we obtain 
\[
\frac1{1+\alpha-\frac{p_*(1-2s)}2}
<
\frac1{1-2s}.
\]
Hence, we get
\begin{equation}\label{degiorgi-nonempty as alpha<n-4s}
    \max\left\{
1,
\frac1{1+\alpha-\frac{p_*(1-2s)}2}
\right\}
<
\frac1{1-2s}\quad \text{ if } \quad \max\left\{
0,\,
\alpha_{\ast}
\right\}<\alpha<n-4s.
\end{equation}
By \eqref{degiorgi-nonempty as alpha>n-4s} and \eqref{degiorgi-nonempty as alpha<n-4s}, we get that the interval in \eqref{degiorgi-q_{1}'in claim 4} is nonempty.

For simplicity, we set $q':=q_1'=q_2'$. Hence, we can rewrite \eqref{eq:degiorgi-psik^1-2s} as
\begin{equation}\label{eq:degiorgi-psik}
    \psi(h)\leqslant C\left(\frac{1}{(h-k)^{\gamma_{1}}}+\frac{h^{p_{*}-1}}{(h-k)^{\gamma_{2}}}\right)^{\frac{1}{1-2s}}\psi(k)^{\beta},
\end{equation}
where $\beta=\frac{1}{q'(1-2s)}$, which satisfies $\beta>1$ by \eqref{degiorgi-q_{1}'in claim 4}.

We are now in a position to perform the De Giorgi iteration. Fix $k_0>0$, and let $d>k_{0}$ be determined later. Let us define
\begin{equation*}
    k_j:=k_0+d\left(1-2^{-j}\right),\quad {\rm for} \quad j\in\mathbb{N}_0
\end{equation*}
and $Y_{j}=\psi(k_{j})$.
Hence, \eqref{eq:degiorgi-psik} can be rewritten as
\begin{align*}
    Y_{j+1} &\leqslant C\left(2^{(j+1)\gamma_{1}}d^{-\gamma_{1}}+(2d)^{p_{*}-1}2^{(j+1)\gamma_{2}}d^{-\gamma_{2}}\right)^{\frac{1}{1-2s}}Y_{j}^{\beta} \\
    &\leqslant C 2^{(j+1)\frac{\gamma_{2}}{1-2s}}\,d^{\frac{-\gamma_{1}}{1-2s}}Y_{j}^{\beta},
\end{align*}
where we have used $0<\gamma_1<\gamma_2$, $-\gamma_1=p_*-1-\gamma_2$, and absorbed the factor $2^{p_*-1}$ into the constant $C$. Iterating the above inequality, we obtain
\begin{equation*}
    Y_j
\leqslant
\left(Cd^{-\frac{\gamma_1}{1-2s}}\right)^{\frac{\beta^j-1}{\beta-1}}
2^{\frac{\gamma_2}{1-2s}\cdot
\frac{\beta^{j+1}-(j+1)\beta+j}{(\beta-1)^2}}
Y_0^{\beta^j} \leqslant \left(C^{\frac{1}{\beta-1}}d^{-\frac{\gamma_{1}}{(1-2s)(\beta-1)}} 2^{\frac{\gamma_{2}\beta}{(1-2s)(\beta-1)^{2}}}Y_{0} \right)^{\beta^{j}}.
\end{equation*}
Choose $d>k_0$ sufficiently large such that
\begin{equation*}
    C^{\frac1{\beta-1}}
d^{-\frac{\gamma_1}{(1-2s)(\beta-1)}}
2^{\frac{\gamma_2\beta}{(1-2s)(\beta-1)^2}}
Y_0<1.
\end{equation*}
Then $Y_j\longrightarrow0$ as $j\to\infty$.  Since $k_j\uparrow k_0+d$, it follows that $|A_{k_0+d}|=0$. Hence,
\begin{equation*}
    v(t)\leqslant k_0+d\quad\text{for a.e. }t\in(0,L),
\end{equation*}
which yields $\|v\|_{L^\infty(0,L)}\leqslant k_0+d$. 

\medskip

This completes the proof of Proposition~\ref{L infty for s<1/2}.

	\end{proof}

      \begin{remark}\label{rem:lower-bound-vanishes}
      For $0<s<\frac12$, the lower bound $\alpha_{\ast}$ is always strictly less than $1$. Moreover, if 
\begin{equation*}
    \frac{3n-\sqrt{9n^2-8n}}8\leqslant s<\frac12,
\end{equation*}
then this lower bound is nonpositive. Consequently, $\alpha_{\ast}=0$. Hence, the assumption \eqref{degiorgi-alpha choice} in Proposition~\ref{L infty for s<1/2} reduces simply to
\begin{equation*}
    0<\alpha<n.
\end{equation*}
\end{remark}

	\begin{remark}\label{L infty for 0<s<1}
    For $ \frac{1}{2}\leqslant s<1$, the $L^\infty$ regularity holds for every $0<\alpha<n$. When $s=\frac12$, the conclusion can be established by a similar argument. In this case, the condition $\alpha_{\ast}<\alpha<n$ reduces simply to $0<\alpha<n$. When $s > 1/2$, the one-dimensional Sobolev embedding $H^s(0,L) \hookrightarrow L^\infty(0,L)$ applies directly.
	\end{remark}
	\subsection{Higher regularity of solutions}
	In this subsection, we bootstrap the $L^\infty$ bound obtained above to full smoothness. We first establish H\"older continuity of $v$ by applying the nonlocal Harnack inequality, and then prove that the convolution term $B(t)$ inherits the regularity of $v$, which in turn yields higher Sobolev regularity by iteration.

	For convenience, we set 
	\begin{equation}\label{f}
		f:=\left(\int_{-\infty}^{+\infty}\mathcal{K}_{-\frac{\alpha}{2}}(t-\tau)v(\tau)^{p_{\ast}} \ud\tau\right)v(t)^{p_{\ast}-1}.
	\end{equation}

	We first study the H\"older continuity of $v$. 
	To this end, we apply the regularity result in \cite[Proposition 3.8]{MR3694655}. 
	Since our kernel corresponds to a tempered stable process, the proof of this regularity result also follows the ideas in Kassmann \cite{Kassmann2009} and Silvestre \cite{Silvestre2006} concerning H\"older continuity.

	\begin{proposition}\label{Cbeta estimate}
		Let $n \geqslant 2$, $s \in (0,1)$, and $\alpha \in (0,n)$. If $v \in H^{s}_{L} \cap L^{\infty}(\mathbb{R})$ is a solution to \eqref{eq:Lgamma=hartree}, then $f \in L^{q}(0,L)$ for every $q \geqslant 1$. Moreover, there exist constants $c > 0$ and $\beta \in (0,1)$ such that, for any $R \in (0,1)$,
		\begin{equation*}
			|v(t) - v(\tau)|
			\leqslant
			c\,|t - \tau|^{\beta}
			\left(
			R^{-\beta}\|v\|_{L^\infty(\mathbb R)}
			+
			\|f\|_{L^q(0,L)}
			\right).
		\end{equation*}
	\end{proposition}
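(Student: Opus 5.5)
The plan has two parts. First we show $f\in L^\infty(0,L)$, which gives $f\in L^q(0,L)$ for every $q\geqslant 1$. Then we invoke the Hölder estimate of \cite[Proposition~3.8]{MR3694655} for $\mathcal{L}_s$ with a bounded, or $L^q$, right-hand side.

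\emph{Bound on $f$.} Since $v$ is $L$-periodic, we have
\[
\int_{-\infty}^{+\infty}\mathcal{K}_{-\frac{\alpha}{2}}(t-\tau)v(\tau)^{p_\ast}\,\ud\tau=\int_0^L\mathcal{K}^L_{-\frac{\alpha}{2}}(t-\tau)v(\tau)^{p_\ast}\,\ud\tau=B(t).
\]
By Lemma~\ref{Pro of ker}(i)(a), $\mathcal{K}_{-\frac{\alpha}{2}}\in L^1(\mathbb{R})$. Hence
\[
0\leqslant B(t)\leqslant \|\mathcal{K}_{-\frac{\alpha}{2}}\|_{L^1(\mathbb{R})}\,\|v\|_{L^\infty(\mathbb{R})}^{p_\ast}
\]
for every $t$, which is Young's inequality with exponents $(1,\infty)$. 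It follows that $|f(t)|\leqslant \|\mathcal{K}_{-\frac{\alpha}{2}}\|_{L^1}\|v\|_{L^\infty}^{2p_\ast-1}$. Therefore $f\in L^\infty(0,L)\subset L^q(0,L)$ for all $q\geqslant1$, with $\|f\|_{L^q(0,L)}\leqslant L^{1/q}\|f\|_{L^\infty}$. This step is routine. Because the global $L^1$ bound on the kernel is available, we do not need the near-origin case distinction in $\alpha$ used in Proposition~\ref{right hand is well-defined}.

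\emph{Hölder estimate.} We regard \eqref{eq:Lgamma=hartree} as the linear equation $\mathcal{L}_s v=f$. The operator
\[
\mathcal{L}_s v=\kappa_{n,s}\,\mathrm{P.V.}\int(v(t)-v(\tau))\mathcal{K}_s(t-\tau)\,\ud\tau+c_{n,s}v
\]
has a kernel that, by Lemma~\ref{Pro of ker}(ii), is comparable to $|t-\tau|^{-1-2s}$ for $|t-\tau|\leqslant 1$ and decays exponentially at infinity. This is exactly the tempered-stable setting of \cite[Proposition~3.8]{MR3694655}, whose proof follows \cite{Kassmann2009,Silvestre2006}. We move the zero-order term to the right-hand side, setting
\[
\tilde f:=f-c_{n,s}v\in L^\infty.
\]
We then run the standard oscillation-decay argument. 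On a ball of radius $R<1$ around $t$, the weak Harnack inequality for the nonlocal operator gives a geometric decay of the oscillation across dyadic scales. The tail contribution is controlled by $\|v\|_{L^\infty(\mathbb{R})}$ together with the exponential decay of $\mathcal{K}_s$. The source term contributes $R^{2s-1/q}\|\tilde f\|_{L^q}$ with $q>1/(2s)$, and this can be achieved for large $q$ since $f$ lies in every $L^q$. Iterating the decay yields
\[
|v(t)-v(\tau)|\leqslant c|t-\tau|^\beta\bigl(R^{-\beta}\|v\|_{L^\infty}+\|f\|_{L^q}\bigr).
\]
Here the $c_{n,s}\|v\|$ contribution is absorbed into the first term, since $R<1$ and $R^{-\beta}\geqslant1$.

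\emph{Main obstacle.} The delicate point is checking that \cite[Proposition~3.8]{MR3694655} indeed applies. Its hypotheses involve only the kernel $\mathcal{K}_s$ and an $L^q$ right-hand side, and the Hartree nonlinearity enters only through the bound on $f$ established above. One must also handle the periodic, and hence infinite, tail correctly. Since $v$ is globally bounded and $\mathcal{K}_s$ is integrable away from the origin, the tail integral $\int_{|t-\tau|>R}|v|\,\mathcal{K}_s$ is finite and of order $R^{-2s}\|v\|_\infty$. This fits into the Harnack-type scheme without further modification.
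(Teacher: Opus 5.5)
Your proposal is correct and follows the paper's route: you bound $f$ using $\mathcal{K}_{-\frac{\alpha}{2}}\in L^1(\mathbb{R})$ together with $v\in L^\infty$, and then apply \cite[Proposition~3.8]{MR3694655} to $\mathcal{L}_s v=f$. Your Young inequality with exponents $(1,\infty)$, which gives $f\in L^\infty(0,L)$ directly, is a cleaner version of the paper's H\"older--Young computation with the exponent choice $q=p/(2p_\ast-1)$, and it yields every $L^q$ at once.
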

	
	\begin{proof}
		Since $v\in H_L^s$, it is $L$-periodic. Together with $v\in L^\infty(\mathbb{R})$, we have $v\in L^p(0,L)$ for every $1\leqslant p<\infty$.
		
		Let $q \geqslant 1$ be fixed. Then
		\begin{equation*}
			\|f\|_{L^q(0,L)}^q
			=
			\int_0^L A(t)B(t)\,\ud t,
		\end{equation*}
		where
		\begin{equation*}
			A(t)=|v(t)|^{(p_{\ast}-1)q}, \qquad
			B(t)=\left|\int_{0}^{L}\mathcal{K}_{-\frac{\alpha}{2}}^{L}(t-\tau)\, v(\tau)^{p_{\ast}}\, \ud\tau\right|^q.
		\end{equation*}
		Applying H\"older's inequality with exponents $r,r'>1$ satisfying
		$\frac{1}{r}+\frac{1}{r'}=1$, we obtain
		\begin{equation*}
			\int_0^L A(t)B(t)\,\ud t
			\leqslant
			\Bigl(\int_0^L A(t)^r\,\ud t\Bigr)^{1/r}
			\Bigl(\int_0^L B(t)^{r'}\,\ud t\Bigr)^{1/r'}.
		\end{equation*}
		We have $A(t)^r = |v(t)|^{(p_\ast-1)qr}$. Choosing $r$ such that
		\begin{equation*}
			(p_\ast-1)qr = p,
		\end{equation*}
		it follows that
		\begin{equation*}
			\int_0^L A(t)^r\,\ud t
			= \int_0^L |v(t)|^{p}\,\ud t < \infty.
		\end{equation*}
		Since $\mathcal{K}_{-\frac{\alpha}{2}}^{L} \in L^1(0,L)$,
		Young's inequality on $(0,L)$ yields
		\begin{equation*}
			\Bigl\|
			\int_{0}^{L}\mathcal{K}_{-\frac{\alpha}{2}}^{L}(\cdot-\tau)
			\, v(\tau)^{p_{\ast}}\, \ud\tau
			\Bigr\|_{L^{qr'}(0,L)}
			\leqslant
			\|\mathcal{K}_{-\frac{\alpha}{2}}^{L}\|_{L^1(0,L)}
			\, \|v^{p_{\ast}}\|_{L^{qr'}(0,L)}.
		\end{equation*}
		Choosing $r'>1$ such that
		\begin{equation*}
			p_{\ast} q r' = p,
		\end{equation*}
		we obtain
		\begin{equation*}
			\int_0^L B(t)^{r'}\,\ud t < \infty.
		\end{equation*}
		The condition $\frac{1}{r}+\frac{1}{r'}=1$ then gives
		\begin{equation*}
			q = \frac{p}{2p_\ast-1}.
		\end{equation*}
		Taking $p\gg1$ sufficiently large, we obtain $q>n$. Hence
		\begin{equation*}
			f \in L^q(0,L) \quad {\rm for\ some}\ q>n.
		\end{equation*}
		
		We rewrite the equation as
		\begin{equation*}
			\mathcal L_s v = f \quad {\rm in} \quad B_R(x_0).
		\end{equation*}
		Since $v$ is $L$-periodic, $f \in L^q(0,L)$ implies
		$f \in L^q_{\loc}(\mathbb R)$.
		Moreover, any ball $B_R(x_0)$ can be covered by finitely many translates of $(0,L)$, so that
		\begin{equation*}
			\|f\|_{L^q(B_R(x_0))}
			\leqslant C\,\|f\|_{L^q(0,L)}.
		\end{equation*}
		Therefore, by \cite[Proposition 3.8]{MR3694655}, we obtain
		\begin{equation*}
			|v(t)-v(\tau)|
			\leqslant
			c |t-\tau|^\beta
			\Big(
			R^{-\beta}\|v\|_{L^\infty(\mathbb R)}
			+
			\|f\|_{L^q(0,L)}
			\Big).
		\end{equation*}
	\end{proof}
	Now, we recall the following regularity improvement result from \cite[Proposition~3.9]{MR3694655}, whose proof relies on the H\"older estimate for the fractional Laplacian due to Dong and Kim \cite{DongKim2013} together with the Schauder regularity theory of Silvestre \cite{Silvestre2006}.
	\begin{proposition}\label{regularity improvement}
		Let $n \geqslant 2$, $s \in (0,1)$, $\alpha \in (0,n)$, and $\beta \in (0,1)$. Assume that $f \in \mathcal{C}^{\beta}(\mathbb{R})$ and let
		$v \in L^{\infty}(\mathbb{R}) \cap \mathcal{C}^{\beta} (\mathbb{R})$ be a solution to \eqref{eq:Lgamma=hartree}.
		Then there exists $c > 0$, depending on $n$, $\beta$, and $s$, such that
		\[
		\|v\|_{\mathcal{C}^{\beta+2s}(\mathbb{R})}
		\leqslant
		c\bigl(\|v\|_{\mathcal{C}^{\beta}(\mathbb{R})} + \|f\|_{\mathcal{C}^{\beta}(\mathbb{R})}\bigr).
		\]
	\end{proposition}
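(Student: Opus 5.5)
The plan is a Schauder-type estimate for $\mathcal{L}_s$, reduced to the known estimate for $(-\Delta)^s$ on $\mathbb{R}$. Write $\mathcal{L}_s v = f$ as
\[
\kappa_{n,s}\,\mathrm{P.V.}\int_{\mathbb{R}}\bigl(v(t)-v(\tau)\bigr)\mathcal{K}_s(t-\tau)\,\ud\tau = f - c_{n,s}v .
\]
By Lemma~\ref{Pro of ker}(ii), $\mathcal{K}_s(t)\sim |t|^{-1-2s}$ near $0$. We then split
\[
\kappa_{n,s}\mathcal{K}_s(t)=\frac{\kappa_{1,s}}{|t|^{1+2s}}\,\chi(t)+E(t),
\]
where $\chi$ is a smooth cutoff equal to $1$ near $0$. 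The plan is to show that the remainder $E$ is integrable, i.e. $|E(t)|\lesssim |t|^{-2s}$ near $0$ or better. This requires expanding the sphere integral in \eqref{Km} to second order, as in \cite[Lemma~3.3]{MR3694655}. The tail of $E$ decays exponentially by Lemma~\ref{Pro of ker}(ii)(b).

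With this splitting we obtain
\[
(-\Delta)^s_{\mathbb{R}}v = f - c_{n,s}v - \int_{\mathbb{R}}\bigl(v(t)-v(\tau)\bigr)E(t-\tau)\,\ud\tau + \text{(tail of the truncated fractional kernel)} =: F .
\]
The remaining task is to check that $F\in \mathcal{C}^\beta(\mathbb{R})$ with
\[
\|F\|_{\mathcal{C}^\beta}\lesssim \|f\|_{\mathcal{C}^\beta}+\|v\|_{\mathcal{C}^\beta}.
\]
Since $E\in L^1$, the convolution-type term $\int (v(t)-v(\tau))E(t-\tau)\,\ud\tau$ inherits the $\mathcal{C}^\beta$ seminorm of $v$: differencing in $t$ gives a bound $2[v]_\beta|h|^\beta\|E\|_{L^1}$. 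The cutoff tail $\int (v(t)-v(\tau))(1-\chi)(t-\tau)|t-\tau|^{-1-2s}\,\ud\tau$ is handled identically.

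For the key step we then apply the Hölder estimate of Silvestre \cite{Silvestre2006} and Dong--Kim \cite{DongKim2013}. On each unit interval, a bounded $\mathcal{C}^\beta$ function $v$ with $(-\Delta)^s v=F\in\mathcal{C}^\beta$ satisfies
\[
\|v\|_{\mathcal{C}^{\beta+2s}(I)}\le c\bigl(\|v\|_{L^\infty}+\|F\|_{\mathcal{C}^\beta}\bigr).
\]
If $\beta+2s$ is an integer, one instead uses a slightly smaller exponent or the Zygmund class. Taking the supremum over unit intervals gives the global bound, which is legitimate because the constants are translation invariant. Finally, $\|v\|_{L^\infty}\le\|v\|_{\mathcal{C}^\beta}$, so the desired estimate follows with $c=c(n,\beta,s)$.

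The main obstacle is the kernel comparison: we must make sure that $\kappa_{n,s}\mathcal{K}_s-\kappa_{1,s}|t|^{-1-2s}$ is genuinely integrable near the origin, that is, the singular parts match including the constant. If the leading constants do not match exactly, we would instead rescale $t$, or apply a Schauder theory valid for general symmetric kernels comparable to $|t|^{-1-2s}$ with a smooth correction. In the latter case we would cite the kernel-robust estimates underlying \cite[Proposition~3.9]{MR3694655}.
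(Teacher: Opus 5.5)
Your argument is correct, but it is not quite the paper's route. The paper gives no argument of its own here: it imports \cite[Proposition~3.9]{MR3694655} and attributes it to the Schauder theory of Dong--Kim \cite{DongKim2013} and Silvestre \cite{Silvestre2006}.

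\emph{What each route needs.} Dong--Kim's estimates apply to kernels $a(y)|y|^{-1-2s}$ with $a$ merely measurable and bounded between positive constants. Along that route you only need the two-sided comparability $\mathcal{K}_s(t)\asymp|t|^{-1-2s}$ near $0$, which is Lemma~\ref{Pro of ker}(ii)(c), after splitting off the integrable exponential tail. Their Zygmund-space version also covers the integer cases $\beta+2s\in\{1,2\}$, which the statement silently includes. Your perturbative reduction to the exact one-dimensional $(-\Delta)^s$ uses only the classical Schauder estimate for the fractional Laplacian. The price is sharper kernel asymptotics than Lemma~\ref{Pro of ker} supplies: you need $\kappa_{n,s}\mathcal{K}_s-\kappa_{1,s}\chi|t|^{-1-2s}\in L^1$.

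\emph{The remainder bound.} This integrability is true for every $s\in(0,1)$. However, your parenthetical bound $|E(t)|\lesssim|t|^{-2s}$ would not suffice when $s\geqslant\frac12$, since $|t|^{-2s}$ is then not integrable. What saves you is that $\mathcal{K}_s$ depends on $t$ only through $\cosh t-1=\frac{t^2}{2}+O(t^4)$. Writing the sphere integral in the variable $w=1-\langle\theta,\sigma\rangle$ gives, as $t\to0$, $\mathcal{K}_s(t)=A|t|^{-1-2s}+O\bigl(|t|^{1-2s}+|\ln|t||+1\bigr)$, with no $|t|^{-2s}$ term.

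\emph{The constants.} Your worry about matching constants is moot. Any $A>0$ could be absorbed by dividing the equation by $A/\kappa_{1,s}$. In fact $A=\int_{\mathbb{R}^{n-1}}(1+|y'|^2)^{-\frac{n+2s}{2}}\,\ud y'$, so $\kappa_{n,s}A=\kappa_{1,s}$ exactly. This reflects that the principal part of $\mathcal{L}_s$ is the $n$-dimensional fractional Laplacian acting on functions of one variable.

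With these points, the remaining steps go through as you describe: the bound $2[v]_\beta|h|^\beta\|E\|_{L^1}$ for the remainder terms, and the global Schauder estimate for $(-\Delta)^s v=F$ understood distributionally.
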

	
	However, the above regularity improvement alone is not sufficient for our purposes. When the right-hand side is a local nonlinearity, such as $v^p$ with $p>1$, the regularity of $v$ immediately yields that of the nonlinearity. 
	In contrast, in our case the right-hand side is a nonlocal term $f$ defined in \eqref{f}, whose regularity does not follow directly from that of $v$. Therefore, it is necessary to analyze the regularity of $f$ in order to apply the above result and obtain the desired regularity improvement.
	\begin{proposition}\label{f regularity}
		Let $n \geqslant 2$, $s \in (0,1)$, and $\alpha \in (0,n)$. If $v \in H_{L}^{s} \cap \mathcal{C}^{k,\ell}(\mathbb{R})$ is a positive solution to \eqref{eq:Lgamma=hartree} for some $k \in \mathbb{N}$ and $0<\ell<1$, then $f \in \mathcal{C}^{k,\ell}(\mathbb{R})$.
	\end{proposition}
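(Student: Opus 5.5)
The plan is to write $f=B\,v^{p_\ast-1}$ with $B(t)=\int_{-\infty}^{+\infty}\mathcal{K}_{-\frac{\alpha}{2}}(t-\tau)v(\tau)^{p_\ast}\,\ud\tau=(\mathcal{K}_{-\frac{\alpha}{2}}\ast v^{p_\ast})(t)$ and to prove separately that both factors lie in $\mathcal{C}^{k,\ell}(\mathbb{R})$. The algebra property of Hölder spaces on $\mathbb{R}$ then gives $f\in\mathcal{C}^{k,\ell}(\mathbb{R})$, since $\|gh\|_{\mathcal{C}^{k,\ell}}\leqslant C\|g\|_{\mathcal{C}^{k,\ell}}\|h\|_{\mathcal{C}^{k,\ell}}$.

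\textbf{Step 1: the local factors.} Since $v$ is continuous, positive and $L$-periodic, there are constants $0<m\leqslant v\leqslant M$ on $\mathbb{R}$. The map $x\mapsto x^{\gamma}$ is smooth on $[m,M]$ for every real $\gamma$. By the chain rule (Faà di Bruno), $v^{p_\ast}$ and $v^{p_\ast-1}$ therefore belong to $\mathcal{C}^{k,\ell}(\mathbb{R})$. For the Hölder seminorm of the $k$-th derivative, each term of Faà di Bruno's formula is a product of a smooth function of $v$ (Lipschitz on $[m,M]$, hence $\ell$-Hölder along $v$) and derivatives $v^{(j)}$ with $j\leqslant k$. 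Every such derivative is either Lipschitz (for $j<k$) or $\ell$-Hölder (for $j=k$), and all are bounded by periodicity. Here the positivity of $v$ is essential when $p_\ast-1<1$.

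\textbf{Step 2: the convolution factor.} By Lemma~\ref{Pro of ker}(i)(a), $\mathcal{K}_{-\frac{\alpha}{2}}\in L^1(\mathbb{R})$. Set $g:=v^{p_\ast}\in\mathcal{C}^{k,\ell}(\mathbb{R})$, which is bounded with bounded derivatives. Differentiating under the integral sign is justified by dominated convergence: the difference quotients of $g^{(j-1)}$ are bounded by $\|g^{(j)}\|_\infty$ and $\mathcal{K}_{-\frac{\alpha}{2}}$ is integrable. This gives $B^{(j)}=\mathcal{K}_{-\frac{\alpha}{2}}\ast g^{(j)}$ for $j\leqslant k$, with $\|B^{(j)}\|_\infty\leqslant\|\mathcal{K}_{-\frac{\alpha}{2}}\|_{L^1}\|g^{(j)}\|_\infty$. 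For the Hölder quotient I use
\[
|B^{(k)}(t)-B^{(k)}(t')|\leqslant\int_{\mathbb{R}}\mathcal{K}_{-\frac{\alpha}{2}}(\tau)\,\big|g^{(k)}(t-\tau)-g^{(k)}(t'-\tau)\big|\,\ud\tau\leqslant\|\mathcal{K}_{-\frac{\alpha}{2}}\|_{L^1}[g^{(k)}]_{\ell}\,|t-t'|^{\ell}.
\]
Thus $B\in\mathcal{C}^{k,\ell}(\mathbb{R})$. Equivalently, one can work with the periodized kernel $\mathcal{K}^L_{-\frac{\alpha}{2}}\in L^1(0,L)$ on one period.

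\textbf{Main obstacle.} The main point to check is that the convolution step does not depend on the singularity of $\mathcal{K}_{-\frac{\alpha}{2}}$ near $0$ when $\alpha\leqslant1$. In the argument above only the $L^1$ norm of the kernel enters; all derivatives fall on $g$ through translation invariance. So this step goes through uniformly for $\alpha\in(0,n)$, and the real care is needed only in the composition estimate of Step~1. The one remaining detail is to confirm that the dominated-convergence justification uses only the global boundedness of $g^{(j)}$, which follows from periodicity.
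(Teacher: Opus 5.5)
Your proposal is correct and takes essentially the same route as the paper. Both arguments reduce the claim to showing $B=\mathcal{K}_{-\frac{\alpha}{2}}\ast v^{p_\ast}\in\mathcal{C}^{k,\ell}(\mathbb{R})$, differentiate under the integral using $\mathcal{K}_{-\frac{\alpha}{2}}\in L^1(\mathbb{R})$, and transfer the Hölder quotient of $(v^{p_\ast})^{(k)}$ to $B^{(k)}$ at the cost of a factor $\|\mathcal{K}_{-\frac{\alpha}{2}}\|_{L^1}$. Your version also makes explicit three points the paper uses only implicitly: the uniform bounds $0<m\leqslant v\leqslant M$ from periodicity that drive the composition step, the dominated-convergence justification, and the algebra property of $\mathcal{C}^{k,\ell}$.
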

	\begin{proof}
		Since $v \in \mathcal{C}^{k,\ell}(\mathbb{R})$ and $v(x)>0$ for all $x \in \mathbb{R}$,
		we have $v^{p} \in \mathcal{C}^{k,\ell}(\mathbb{R})$ for any real $p$,
		in particular for $p=p_\ast-1$ and $p=p_\ast$.
		Therefore, to prove that $f \in \mathcal{C}^{k,\ell}(\mathbb{R})$,
		it suffices to show that $B \in \mathcal{C}^{k,\ell}(\mathbb{R})$, where
		\begin{equation}\label{B(t)}
			B(t):=\int_{\mathbb{R}}
			\mathcal{K}_{-\frac{\alpha}{2}}(t-\tau)\,v(\tau)^{p_{\ast}}
			\, \ud\tau
			=\int_{\mathbb{R}}
			\mathcal{K}_{-\frac{\alpha}{2}}(\tau)\,v(t-\tau)^{p_{\ast}}
			\, \ud\tau .
		\end{equation}
		
		We first consider the case $k=0$.
		For any $t_{1}, t_{2} \in \mathbb{R}$, taking absolute values and using that
		$v^{p_{\ast}} \in \mathcal{C}^{\ell}(\mathbb{R})$, we obtain
		\begin{align*}
			|B(t_{1})-B(t_{2})|
			&\leqslant \int_{\mathbb{R}}\mathcal{K}_{-\frac{\alpha}{2}}(\tau)
			\bigl|v(t_{1}-\tau)^{p_{\ast}}
			- v(t_{2}-\tau)^{p_{\ast}}\bigr|
			\, \ud\tau \\
			&\leqslant C |t_{1}-t_{2}|^{\ell}
			\int_{\mathbb{R}}\mathcal{K}_{-\frac{\alpha}{2}}(\tau)
			\, \ud\tau \\
			&= C \|\mathcal{K}_{-\frac{\alpha}{2}}\|_{L^{1}(\mathbb{R})}
			|t_{1}-t_{2}|^{\ell},
		\end{align*}
		which shows that $B \in \mathcal{C}^{0,\ell}(\mathbb{R})$.
		
		For $k \geqslant 1$, we differentiate with respect to $t$.
		Since $v^{p_\ast} \in \mathcal{C}^{k,\ell}(\mathbb{R})$ and
		$\mathcal{K}_{-\frac{\alpha}{2}} \in L^1(\mathbb{R})$,
		differentiation can be passed under the integral sign, yielding
		\[
		\partial_t^j B(t)
		= \int_{\mathbb{R}}
		\mathcal{K}_{-\frac{\alpha}{2}}(\tau)\,
		\partial_t^j\!\bigl(v(t-\tau)^{p_\ast}\bigr)
		\, \ud\tau,
		\quad j=1,\dots,k.
		\]
		Since $v^{p_\ast} \in \mathcal{C}^{k,\ell}(\mathbb{R})$, it follows that
		$\partial_t^j (v^{p_\ast}) \in \mathcal{C}^{0,\ell}(\mathbb{R})$ for all $j \leqslant k$.
		Therefore, the estimate for the case $k=0$ applies to each
		$\partial_t^j B$, and we obtain
		$\partial_t^j B \in \mathcal{C}^{0,\ell}(\mathbb{R})$ for $j=1,\dots,k$.
		Consequently, $B \in \mathcal{C}^{k,\ell}(\mathbb{R})$.
	\end{proof}
	\begin{remark}\label{remark C infty}
		Proposition~\ref{Cbeta estimate} implies that any solution
		$v \in H_{L}^{s} \cap L^{\infty}(\mathbb{R})$ of equation \eqref{eq:Lgamma=hartree}
		belongs to $\mathcal{C}^{\beta}(\mathbb{R})$ for some $\beta \in (0,1)$. Moreover, if $v$ is positive, then Propositions~\ref{regularity improvement}
		and~\ref{f regularity} allow one to bootstrap the regularity and conclude that
		$v \in \mathcal{C}^{\infty}(\mathbb{R})$.
	\end{remark}
	\section{Minimization problem}\label{sec:minimization}
	Building on the variational framework and regularity results established in the previous section, we now prove the existence of Delaunay-type solutions by establishing the existence of minimizers and completing the proof of Theorem~\ref{thm:main}.

	\subsection{Variational formulation}
	For fixed $\alpha \in (0,n)$, we consider the following minimization problem
	\begin{equation}\label{eq:minimization}
		c(L)=\inf_{v \in H_{L}^{s},\,v\not\equiv0}\mathscr{F}_{L}(v),
	\end{equation}
	where the Rayleigh-type quotient $\mathscr{F}_{L}$ is defined by
	\begin{equation}\label{eq:Rayleigh}
		\mathscr{F}_{L}(v)=\frac{\frac{\kappa_{n,s}}{2}\int_{0}^{L}\int_{0}^{L}\left(v(t)-v(\tau)\right)^{2}\mathcal{K}_{s}^{L}(t-\tau)\,\ud \tau\, \ud t+c_{n, s} \int_{0}^{L}v(t)^{2}\,\ud t}{\left(\int_{0}^{L}\int_{0}^{L} \mathcal{K}_{-\frac{\alpha}{2}}^{L}(t-\tau)\,v(\tau)^{p_{\ast}}\, v(t)^{p_{\ast}}\, \ud \tau\,\ud t\right)^{\frac{1}{p_{\ast}}}}.
	\end{equation}
	We set
	\begin{equation*}
		\mathcal{H}_{L}(v) := \int_{0}^{L}\int_{0}^{L} \mathcal{K}_{-\frac{\alpha}{2}}^{L}(t-\tau)\,v(\tau)^{p_{\ast}}\, v(t)^{p_{\ast}}\, \ud \tau\,\ud t
	\end{equation*}
	for the Hartree interaction energy. We also write $\mathcal{E}_{L}(v)$ for the numerator of $\mathscr{F}_{L}$, so that $\mathscr{F}_{L}(v) = \mathcal{E}_{L}(v) / \mathcal{H}_{L}(v)^{1/p_{\ast}}$.

	\begin{lemma}[Existence of minimizer]\label{lem:existence-minimizer}
		Let $n \geqslant 2$, $s \in (0,1)$, and $\alpha \in (\alpha_{\ast},n)$. For any $L>0$, the infimum $c(L)$ is achieved by a positive function $v_{L} \in \mathcal{C}^{\infty}(\mathbb{R})$ which solves \eqref{eq:periodic hartree 2}.
	\end{lemma}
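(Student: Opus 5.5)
We will use the direct method on $\mathscr{F}_L$, which is $0$-homogeneous, and normalize the Hartree term. Since $\mathscr{F}_L(|v|)\leqslant \mathscr{F}_L(v)$ (because $\bigl||v(t)|-|v(\tau)|\bigr|\leqslant |v(t)-v(\tau)|$ and $\mathcal{H}_L$ only sees $|v|$), we may restrict to $v\geqslant0$. We therefore take a minimizing sequence $v_j\geqslant0$ with $\mathcal{H}_L(v_j)=1$.

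\textbf{Step 1: $c(L)>0$.} By Proposition~\ref{right hand is well-defined}, $\mathcal{H}_L(v)\leqslant C\|v\|_{H^s_L}^{2p_\ast}$. Since $c_{n,s}>0$ and $\kappa_{n,s}>0$, the numerator satisfies $\mathcal{E}_L(v)\geqslant c\|v\|_{H^s_L}^2$. Hence $\mathscr{F}_L(v)\geqslant c\,C^{-1/p_\ast}>0$, and $\{v_j\}$ is bounded in $H^s_L$.

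\textbf{Step 2: passage to the limit.} By reflexivity, a subsequence converges weakly in $H^s_L$ to some $v_L$. By Proposition~\ref{compact embedding}, it also converges strongly in $L^q(0,L)$ for the admissible exponents $q$. We need $\mathcal{H}_L(v_j)\to\mathcal{H}_L(v_L)$. The estimates in the proof of Proposition~\ref{right hand is well-defined} control $\mathcal{H}_L$ by $L^{q}$ norms with $q$ strictly subcritical: $2p_\ast/(1+\alpha)<2/(1-2s)$, $p_\ast\varsigma'=2n/(n-2s)<2/(1-2s)$, and $p_\ast$ itself. The same HLS/Hölder/Young bounds applied to the differences $v_j^{p_\ast}-v_L^{p_\ast}$ then give continuity of $\mathcal{H}_L$ along strongly $L^q$-convergent sequences. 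Consequently $\mathcal{H}_L(v_L)=1$, so $v_L\not\equiv0$. Weak lower semicontinuity of $\mathcal{E}_L$ (a continuous convex quadratic form) gives $\mathscr{F}_L(v_L)\leqslant c(L)$, so $v_L$ is a nonnegative minimizer.

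\textbf{Step 3: Euler--Lagrange equation.} We compute $\frac{d}{d\varepsilon}\mathscr{F}_L(v_L+\varepsilon\phi)\big|_{\varepsilon=0}=0$ for $\phi\in H^s_L$. The derivative of $\mathcal{H}_L$ is $2p_\ast\int_0^L B(t)v_L^{p_\ast-1}\phi$, using the symmetry of $\mathcal{K}^L_{-\alpha/2}$. This shows that $v_L$ weakly solves
\[
\mathcal{L}^L_s v_L=c(L)\,B\,v_L^{p_\ast-1}.
\]
Replacing $v_L$ by $\mu v_L$ with $\mu^{2p_\ast-2}=c(L)$ (the equation is homogeneous of degree $2p_\ast-1$ on the right and of degree $1$ on the left) produces a solution of \eqref{eq:periodic hartree 2}, and $\mu v_L$ is still a minimizer.

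\textbf{Step 4: regularity and positivity.} This is the step requiring care, and it is where $\alpha>\alpha_\ast$ enters. We pass to the extension \eqref{eq:periodic extension-normalized}. For $s<1/2$, we need $v_L\in L^{2/(1-2s)}(0,L)$, which follows from the trace Sobolev inequality applied to the energy-finite extension (the critical endpoint, not just the compact range). Proposition~\ref{L infty for s<1/2} then gives $v_L\in L^\infty$. For $s\geqslant1/2$, Remark~\ref{L infty for 0<s<1} gives the same conclusion. Proposition~\ref{Cbeta estimate} then yields $v_L\in\mathcal{C}^\beta$. Since $v_L\geqslant0$, $v_L\not\equiv0$, and the right-hand side $f$ is nonnegative, Proposition~\ref{maximum principle} gives $v_L>0$. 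Positivity is what allows Proposition~\ref{f regularity} to apply to $v_L^{p_\ast-1}$ even when $p_\ast<2$. Finally, the bootstrap of Remark~\ref{remark C infty} (Propositions~\ref{regularity improvement} and \ref{f regularity}) gives $v_L\in\mathcal{C}^\infty(\mathbb{R})$.

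The main obstacle is the continuity of $\mathcal{H}_L$ in Step 2, together with making sure the weak solution meets the hypotheses of the De Giorgi proposition. For the first, I would reuse the case split $\alpha<1$, $\alpha=1$, $\alpha>1$ from Proposition~\ref{right hand is well-defined}, applied to differences and combined with the elementary bound $|a^{p}-b^{p}|\leqslant p(a^{p-1}+b^{p-1})|a-b|$.
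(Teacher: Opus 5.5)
Your proposal is correct and follows the paper's proof essentially step for step: normalization $\mathcal{H}_L=1$, compactness plus continuity of $\mathcal{H}_L$ via the estimates behind Proposition~\ref{right hand is well-defined}, weak lower semicontinuity of $\mathcal{E}_L$, the Euler--Lagrange equation with the rescaling $\mu^{2p_\ast-2}=c(L)$, and then the De Giorgi $L^\infty$ bound, H\"older continuity, the strong maximum principle and the bootstrap. Your only deviations are minor tidyings of points the paper leaves implicit in its Claim~5: you replace $v$ by $|v|$ already at the level of the minimizing sequence, so the De Giorgi step is applied to a nonnegative solution from the outset, and you explicitly check the hypothesis $v_L\in L^{2/(1-2s)}(0,L)$ via the critical Sobolev embedding.
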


	\begin{proof}
		Since multiplicative constants do not affect the argument, we may assume without loss of generality that $\kappa_{n,s} = 1$ and $c_{n,s}=1$.

		\medskip
		\noindent{\bf Claim~1:} {\it The infimum $c(L)$ equals $\inf_{v \in \mathcal{S}_{L}} \mathcal{E}_{L}(v)$, where $\mathcal{S}_{L} := \{ v \in H_{L}^{s} : \mathcal{H}_{L}(v) = 1 \}$, and $c(L) > 0$.}

		\noindent{\it Proof.}
		By the $0$-homogeneity of $\mathscr{F}_{L}$ ({\it i.e.}, $\mathscr{F}_{L}(\lambda v) = \mathscr{F}_{L}(v)$ for all $\lambda > 0$), the infimum $c(L)$ does not change if we restrict to the constraint set
		\begin{equation}\label{eq:constraint}
			\mathcal{S}_{L} := \left\{ v \in H_{L}^{s} : \mathcal{H}_{L}(v) = 1 \right\}.
		\end{equation}
		On $\mathcal{S}_{L}$, one has $\mathscr{F}_{L}(v) = \mathcal{E}_{L}(v)$. Since $\mathcal{E}_{L}(v) \sim \|v\|_{H_{L}^{s}}^{2}$ (equivalent norms), the minimization problem \eqref{eq:minimization} is equivalent to
		\begin{equation}\label{eq:min-constrained}
			c(L) = \inf_{v \in \mathcal{S}_{L}} \mathcal{E}_{L}(v).
		\end{equation}
		By Proposition~\ref{right hand is well-defined}, $\mathcal{H}_{L}(v) \leqslant C\|v\|_{H_{L}^{s}}^{2p_{\ast}}$, and for any nonzero $v \in H_L^s$ with $\mathcal{H}_L(v) > 0$, the rescaled function $\lambda v$ with $\lambda = \mathcal{H}_L(v)^{-1/(2p_*)}$ satisfies $\mathcal{H}_L(\lambda v) = 1$, so $\mathcal{S}_L$ is nonempty. Moreover, $c(L) > 0$ since $\mathcal{H}_{L}(v) = 1$ and the compact embedding in Proposition~\ref{compact embedding} prevent the norm from being arbitrarily small, and Claim~1 is proved.

		\medskip
		\noindent{\bf Claim~2:} {\it There exists $v_{L} \in H_{L}^{s}$ such that, up to a subsequence, $v_{j} \rightharpoonup v_{L}$ weakly in $H_{L}^{s}$ and $v_{j} \to v_{L}$ strongly in $L^{q}(0,L)$ for all admissible $q$.}

		\noindent{\it Proof.}
		Let $\{v_{j}\}_{j \in \mathbb{N}} \subset \mathcal{S}_{L}$ be a minimizing sequence, {\it i.e.}, $\mathcal{E}_{L}(v_{j}) \to c(L)$ as $j \to \infty$. Since $\mathcal{E}_{L} \sim \|\cdot\|_{H_{L}^{s}}^{2}$, the sequence $\{v_{j}\}$ is bounded in $H_{L}^{s}$. Since $H_{L}^{s}$ is a Hilbert space, by passing to a subsequence (still denoted $\{v_{j}\}$), there exists $v_{L} \in H_{L}^{s}$ such that
		\begin{equation*}
			v_{j} \rightharpoonup v_{L} \quad {\rm weakly\ in}\ H_{L}^{s}.
		\end{equation*}
		By the compact embedding $H_{L}^{s} \hookrightarrow L^{q}(0,L)$ established in Proposition~\ref{compact embedding}, it follows that
		\begin{equation}\label{eq:strong-conv}
			v_{j} \to v_{L} \quad {\rm strongly\ in}\ L^{q}(0,L)
		\end{equation}
		for all $q$ in the admissible range, {\it i.e.}, $q \in \bigl(1, \frac{2}{1-2s}\bigr)$ when $s \leqslant \tfrac{1}{2}$, and $q \geqslant 1$ when $s > \tfrac{1}{2}$, and Claim~2 is proved.

		\medskip
		\noindent{\bf Claim~3:} {\it $\mathcal{H}_{L}(v_{j}) \to \mathcal{H}_{L}(v_{L})$ as $j \to \infty$ and $\mathcal{H}_{L}(v_{L}) = 1$.}

		\noindent{\it Proof.}
		We show that $\mathcal{H}_{L}(v_{j}) \to \mathcal{H}_{L}(v_{L})$ as $j \to \infty$, and consequently $\mathcal{H}_{L}(v_{L}) = 1$.

		Indeed, this follows from the structure of the Hartree interaction energy. More precisely, we write
		\begin{equation*}
			\mathcal{H}_{L}(v_{j}) - \mathcal{H}_{L}(v_{L}) = \int_{0}^{L}\int_{0}^{L} \mathcal{K}_{-\frac{\alpha}{2}}^{L}(t-\tau) \left( v_{j}(\tau)^{p_{\ast}} v_{j}(t)^{p_{\ast}} - v_{L}(\tau)^{p_{\ast}} v_{L}(t)^{p_{\ast}} \right)\,\ud \tau\,\ud t.
		\end{equation*}
		We add and subtract $v_{j}(\tau)^{p_{\ast}} v_{L}(t)^{p_{\ast}}$ and apply the triangle inequality. For the first term, by the mean value theorem, one has
		\begin{equation*}
			\left| v_{j}(t)^{p_{\ast}} - v_{L}(t)^{p_{\ast}} \right| \leqslant p_{\ast}\left( |v_{j}(t)|^{p_{\ast}-1} + |v_{L}(t)|^{p_{\ast}-1} \right) |v_{j}(t) - v_{L}(t)|.
		\end{equation*}
		Since $\{v_{j}\}$ is bounded in $H_{L}^{s}$ and hence bounded in $L^{q}(0,L)$ for all admissible $q$, the same H\"older and Hardy--Littlewood--Sobolev estimates as in the proof of Proposition~\ref{right hand is well-defined}, together with the strong convergence \eqref{eq:strong-conv}, give
		\begin{equation*}
			\left|\mathcal{H}_{L}(v_{j}) - \mathcal{H}_{L}(v_{L})\right| \to 0 \quad {\rm as} \quad j \to \infty.
		\end{equation*}
		Here, the required exponents $q = p_{\ast}$ and $q = 2p_{\ast}/(1+\alpha)$ (the latter from the Hardy--Littlewood--Sobolev inequality when $\alpha < 1$) both lie in the admissible range \eqref{compact of q}, since $(n+\alpha)(1-2s) < 2(n-2s)$ whenever $\alpha < n$ and $n \geqslant 2$.
		In particular, $v_{L} \not\equiv 0$ since $\mathcal{H}_{L}(v_{L}) = 1$, and Claim~3 is proved.

		\medskip
		\noindent{\bf Claim~4:} {\it The infimum $c(L)$ is attained, {\it i.e.}, $\mathcal{E}_{L}(v_{L}) = c(L)$.}

		\noindent{\it Proof.}
		By the weak lower semicontinuity of $\mathcal{E}_{L}$ (each summand is convex and strongly continuous in $H_{L}^{s}$, hence weakly lower semicontinuous), one has
		\begin{equation*}
			\mathcal{E}_{L}(v_{L}) \leqslant \liminf_{j \to \infty} \mathcal{E}_{L}(v_{j}) = c(L).
		\end{equation*}
		Since $v_{L} \in \mathcal{S}_{L}$, the reverse inequality $\mathcal{E}_{L}(v_{L}) \geqslant c(L)$ is immediate. Therefore $\mathcal{E}_{L}(v_{L}) = c(L)$, and the infimum is attained.

		Since $v_{L}$ minimizes $\mathcal{E}_{L}$ subject to $\mathcal{H}_{L}(v) = 1$, there exists a Lagrange multiplier $\mu \in \mathbb{R}$ such that $D\mathcal{E}_{L}(v_{L}) = \mu\, D\mathcal{H}_{L}(v_{L})$. By the symmetry of $\mathcal{K}_{s}^{L}$ and $\mathcal{K}_{-\alpha/2}^{L}$, for all $\varphi \in H_{L}^{s}$ we have
		\begin{align*}
			D\mathcal{E}_{L}(v_{L})[\varphi] &= 2\int_{0}^{L} \mathcal{L}_{s}^{L} v_{L}(t)\, \varphi(t)\, \ud t, \\
			D\mathcal{H}_{L}(v_{L})[\varphi] &= 2p_{\ast} \int_{0}^{L} \bigl(\mathcal{K}_{-\alpha/2}^{L} \ast v_{L}^{p_{\ast}}\bigr)(t)\, v_{L}(t)^{p_{\ast}-1}\, \varphi(t)\, \ud t,
		\end{align*}
		so the Euler--Lagrange equation reads
		\begin{equation*}
			\mathcal{L}_{s}^{L} v_{L} = \mu\, p_{\ast} \bigl(\mathcal{K}_{-\alpha/2}^{L} \ast v_{L}^{p_{\ast}}\bigr) v_{L}^{p_{\ast}-1}.
		\end{equation*}
		Testing with $\varphi = v_{L}$ gives $\mu = c(L)/p_{\ast} > 0$, since $c(L) > 0$ by Claim~3. Rescaling $v_{L} \mapsto \lambda v_{L}$ with $\lambda = (\mu p_{\ast})^{1/(2(p_{\ast}-1))}$ absorbs the multiplier, and the resulting function solves \eqref{eq:periodic hartree 2}. This proves Claim~4.

		\medskip
		\noindent{\bf Claim~5:} {\it The minimizer $v_{L}$ can be chosen to be a positive function in $\mathcal{C}^{\infty}(\mathbb{R})$.}

		\noindent{\it Proof.}
        By Proposition~\ref{L infty for s<1/2} and Remark~\ref{L infty for 0<s<1}, we have $v_L\in L^\infty(\mathbb{R})$. It then follows from  Proposition~\ref{Cbeta estimate} that $v_{L} \in \mathcal{C}^{\beta}(\mathbb{R})$ for some $\beta \in (0,1)$. 
        
		We claim that $v_{L}$ can be chosen to be strictly positive. Indeed, for any $t, \tau \in (0,L)$, the pointwise inequality
		\begin{equation*}
			(v_{L}(t) - v_{L}(\tau))^{2} - (|v_{L}(t)| - |v_{L}(\tau)|)^{2} = 2\bigl(|v_{L}(t)|\,|v_{L}(\tau)| - v_{L}(t)\,v_{L}(\tau)\bigr) \geqslant 0
		\end{equation*}
		implies that $\mathcal{E}_{L}(|v_{L}|) \leqslant \mathcal{E}_{L}(v_{L})$. Since $\mathcal{H}_{L}(|v_{L}|) = \mathcal{H}_{L}(v_{L}) = 1$, one has $\mathscr{F}_{L}(|v_{L}|) \leqslant \mathscr{F}_{L}(v_{L}) = c(L)$, so $|v_{L}|$ is also a minimizer. We may therefore assume $v_{L} \geqslant 0$. The normalization $\mathcal{H}_{L}(v_{L}) = 1$ guarantees $v_{L} \not\equiv 0$, and the strong maximum principle (Proposition~\ref{maximum principle}) then gives $v_{L} > 0$.
        
         Since $v_{L} > 0$ and $v_{L} \in \mathcal{C}^{\beta}(\mathbb{R})$ for some $\beta \in (0,1)$, Remark~\ref{remark C infty} provides the bootstrap $v_{L} \in \mathcal{C}^{\infty}(\mathbb{R})$, and Claim~5 is proved.
	\end{proof}

	\subsection{The minimizer is nonconstant for large periods}
	Lemma~\ref{lem:existence-minimizer} produces a smooth periodic solution $v_{L}$ for every $L > 0$, but this minimizer could {\it a priori} be the constant solution. We now show that for $L \gg 1$, the minimizer is nonconstant, by comparing the energy of the constant profile with that of a localized test function.

	\begin{lemma}[Energy of the constant solution]\label{lem:constant-energy}
		Let $n \geqslant 2$, $s \in (0,1)$, and $\alpha \in (0,n)$. For every $L > 0$, the constant function $v \equiv \mathbf{c}_{L}$ with
		\begin{equation}\label{eq:constant-state}
			\mathbf{c}_{L} := \left( \frac{c_{n,s}}{\int_{-\infty}^{+\infty}\mathcal{K}_{-\frac{\alpha}{2}}(\tau)\,\ud\tau} \right)^{\frac{1}{2(p_{\ast}-1)}}
		\end{equation}
		solves \eqref{eq:periodic hartree 2}. (We note that $\mathbf{c}_L$ is independent of $L$.) Moreover, one has
		\begin{equation}\label{eq:energy-constant}
			\mathscr{F}_{L}(\mathbf{c}_{L}) = c_{n,s}\, L^{1-\frac{1}{p_{\ast}}} \left( \int_{-\infty}^{+\infty} \mathcal{K}_{-\frac{\alpha}{2}}(\tau)\,\ud\tau \right)^{-\frac{1}{p_{\ast}}}.
		\end{equation}
	\end{lemma}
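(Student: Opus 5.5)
The proof is a direct computation exploiting the fact that for a constant $v\equiv c$ the nonlocal difference term vanishes identically and the periodized convolution reduces to a full-line integral of the kernel.

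\emph{Step 1: the periodized kernel has the full-line mass.} By the definition \eqref{Kml} and Tonelli's theorem (all terms are nonnegative by Lemma~\ref{Pro of ker}(i)(a)), for every $t\in\mathbb{R}$
\begin{equation*}
\int_0^L \mathcal{K}^L_{-\frac{\alpha}{2}}(t-\tau)\,\ud\tau=\sum_{j\in\mathbb{Z}}\int_0^L\mathcal{K}_{-\frac{\alpha}{2}}(t-\tau+jL)\,\ud\tau=\int_{-\infty}^{+\infty}\mathcal{K}_{-\frac{\alpha}{2}}(\sigma)\,\ud\sigma=:\|\mathcal{K}_{-\frac{\alpha}{2}}\|_{L^1(\mathbb{R})}.
\end{equation*}
This quantity is finite and positive by Lemma~\ref{Pro of ker}(i)(a). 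This is the only point needing care: it justifies interchanging sum and integral and shows that the result is independent of $t$ and of $L$.

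\emph{Step 2: the equation.} For $v\equiv c>0$, the principal value term in $\mathcal{L}^L_s$ vanishes, since $v(t)-v(\tau)=0$. Hence $\mathcal{L}^L_s c=c_{n,s}c$. By Step~1, the right-hand side of \eqref{eq:periodic hartree 2} equals $\|\mathcal{K}_{-\frac{\alpha}{2}}\|_{L^1}\,c^{p_\ast}c^{p_\ast-1}=\|\mathcal{K}_{-\frac{\alpha}{2}}\|_{L^1}\,c^{2p_\ast-1}$. Equating the two sides gives
\begin{equation*}
c^{2(p_\ast-1)}=\frac{c_{n,s}}{\|\mathcal{K}_{-\frac{\alpha}{2}}\|_{L^1}},
\end{equation*}
which is exactly \eqref{eq:constant-state}. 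Since $p_\ast>1$, the exponent $\frac{1}{2(p_\ast-1)}$ is well defined, and the constant is visibly independent of $L$.

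\emph{Step 3: the energy.} The numerator of $\mathscr{F}_L(\mathbf{c}_L)$ is $c_{n,s}\mathbf{c}_L^2L$, because the Gagliardo-type term vanishes. By Step~1 integrated over $t\in(0,L)$, the Hartree term equals $\mathcal{H}_L(\mathbf{c}_L)=\mathbf{c}_L^{2p_\ast}L\,\|\mathcal{K}_{-\frac{\alpha}{2}}\|_{L^1}$. Therefore
\begin{equation*}
\mathscr{F}_L(\mathbf{c}_L)=\frac{c_{n,s}\mathbf{c}_L^2L}{\mathbf{c}_L^{2}L^{1/p_\ast}\|\mathcal{K}_{-\frac{\alpha}{2}}\|_{L^1}^{1/p_\ast}}=c_{n,s}L^{1-\frac{1}{p_\ast}}\|\mathcal{K}_{-\frac{\alpha}{2}}\|_{L^1}^{-\frac{1}{p_\ast}},
\end{equation*}
which is \eqref{eq:energy-constant}. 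Note that the factor $\mathbf{c}_L$ cancels, consistent with the $0$-homogeneity of $\mathscr{F}_L$ used in Claim~1 of Lemma~\ref{lem:existence-minimizer}; so \eqref{eq:energy-constant} in fact holds for any positive constant. No genuine obstacle arises beyond the Tonelli step.
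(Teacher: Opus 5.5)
Your proposal is correct and follows the same direct computation as the paper: the difference term vanishes for a constant, the equation reduces to $c_{n,s}\mathbf{c}=\mathbf{c}^{2p_\ast-1}\int_{\mathbb{R}}\mathcal{K}_{-\frac{\alpha}{2}}(\tau)\,\ud\tau$, and substituting into $\mathscr{F}_L$ gives \eqref{eq:energy-constant}. Your Step~1 spells out the Tonelli identity $\int_0^L\mathcal{K}^L_{-\frac{\alpha}{2}}(t-\tau)\,\ud\tau=\int_{\mathbb{R}}\mathcal{K}_{-\frac{\alpha}{2}}(\sigma)\,\ud\sigma$, which the paper uses only implicitly.
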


	\begin{proof}
		For a positive constant $v\equiv c$, the Gagliardo seminorm vanishes and \eqref{eq:periodic hartree 2} reduces to
		\begin{equation*}
			c_{n,s}\,\mathbf{c} = \mathbf{c}^{2p_{\ast}-1} \int_{-\infty}^{+\infty} \mathcal{K}_{-\frac{\alpha}{2}}(\tau)\,\ud\tau,
		\end{equation*}
		which gives \eqref{eq:constant-state}. By Lemma~\ref{Pro of ker}, we know that 
        \begin{equation*}
           0 < \int_{-\infty}^{+\infty} \mathcal{K}_{-\frac{\alpha}{2}}(\tau)\,\ud\tau < \infty,
           \end{equation*}
           which implies that $0 < c_L < \infty$. A direct substitution into \eqref{eq:Rayleigh} yields \eqref{eq:energy-constant}.
	\end{proof}

	\begin{remark}\label{rem:constant-energy-grows}
		From Lemma~\ref{lem:constant-energy}, we see that as $L \to \infty$,
		\begin{equation}\label{eq:constant-energy-grows}
			\mathscr{F}_{L}(\mathbf{c}_{L}) = c_{n,s}\, L^{1-\frac{1}{p_{\ast}}} \left( \int_{-\infty}^{+\infty} \mathcal{K}_{-\frac{\alpha}{2}}(\tau)\,\ud\tau \right)^{-\frac{1}{p_{\ast}}} \to +\infty.
		\end{equation}
		In other words, the energy of the constant solution grows as $L^{1-1/p_{\ast}}$.
	\end{remark}

	\begin{proposition}[Nonconstant minimizer for large periods]\label{prop:nonconstant}
		Let $n \geqslant 2$, $s \in (0,1)$, and $\alpha \in (\alpha_{\ast},n)$. There exists $L_{0} > 0$ such that for every $L \geqslant L_{0}$, the minimizer $v_{L}$ from Lemma~\ref{lem:existence-minimizer} satisfies $v_{L} \not\equiv \mathbf{c}_{L}$.
	\end{proposition}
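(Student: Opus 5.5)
The plan is to compare energies, as in the introduction. By Lemma~\ref{lem:constant-energy} and Remark~\ref{rem:constant-energy-grows}, $\mathscr{F}_L(\mathbf{c}_L)=c_{n,s}L^{1-1/p_\ast}\|\mathcal{K}_{-\alpha/2}\|_{L^1(\mathbb{R})}^{-1/p_\ast}\to\infty$ as $L\to\infty$, because $p_\ast>1$. It therefore suffices to construct, for every large $L$, a competitor $w_L\in H^s_L$ with $\mathscr{F}_L(w_L)\leqslant C_0$, where $C_0$ does not depend on $L$. Once this is done, we choose $L_0$ with $\mathscr{F}_L(\mathbf{c}_L)>C_0$ for all $L\geqslant L_0$. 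Then $c(L)\leqslant C_0<\mathscr{F}_L(\mathbf{c}_L)$. The minimizer $v_L$ of Lemma~\ref{lem:existence-minimizer} cannot be constant: since $\mathscr{F}_L$ is $0$-homogeneous, every positive constant has the same quotient $\mathscr{F}_L(\mathbf{c}_L)$, which is strictly larger than $c(L)$.

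For the competitor, fix a nonnegative $\phi\in\mathcal{C}_c^\infty(\mathbb{R})$, $\phi\not\equiv0$, with $\operatorname{supp}\phi\subset(-1,1)$. For $L>2$, let $w_L(t)=\sum_{j\in\mathbb{Z}}\phi(t+jL)$ be its $L$-periodization. We estimate the numerator and the denominator of $\mathscr{F}_L$ separately.

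\emph{Denominator.} Every term in $\mathcal{K}^L_{-\alpha/2}$ is positive. Keeping only the $j=0$ term and integrating over one period containing the support, we get
\[
\mathcal{H}_L(w_L)\geqslant\int_{\mathbb{R}}\int_{\mathbb{R}}\mathcal{K}_{-\alpha/2}(t-\tau)\phi(\tau)^{p_\ast}\phi(t)^{p_\ast}\,\ud\tau\,\ud t=:h_0>0.
\]
The constant $h_0$ is finite and positive by Lemma~\ref{Pro of ker}(i)(a).

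\emph{Numerator.} The $L^2$ part equals $\|\phi\|_{L^2(\mathbb{R})}^2$. For the Gagliardo part, we unfold the periodization:
\[
\int_0^L\int_0^L (w_L(t)-w_L(\tau))^2\mathcal{K}^L_s(t-\tau)\,\ud\tau\,\ud t=\int_{0}^{L}\int_{\mathbb{R}}(w_L(t)-w_L(\tau))^2\mathcal{K}_s(t-\tau)\,\ud\tau\,\ud t.
\]
We then split the $\tau$-integral at $|t-\tau|<1$ and $|t-\tau|\geqslant1$.
\begin{itemize}
\item[(1)] Near the diagonal, $\mathcal{K}_s(t-\tau)\lesssim|t-\tau|^{-1-2s}$ by Lemma~\ref{Pro of ker}(ii)(c), and the integrand vanishes unless $t$ or $\tau$ lies within distance $1$ of a translate of the support. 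So this part is bounded by a constant multiple of the $W^{s,2}(\mathbb{R})$ seminorm of $\phi$.
\item[(2)] Far from the diagonal, we use $(a-b)^2\leqslant2a^2+2b^2$ and the exponential decay $\mathcal{K}_s(r)\lesssim e^{-\frac{n+2s}{2}|r|}$ from Lemma~\ref{Pro of ker}(ii)(b). This part is then bounded by $C\|\mathcal{K}_s\|_{L^1(|r|\geqslant1)}\,\|w_L\|_{L^2(0,L)}^2=C'\|\phi\|_{L^2}^2$.
\end{itemize}
Hence $\mathcal{E}_L(w_L)\leqslant C_1$ uniformly in $L>2$, and $\mathscr{F}_L(w_L)\leqslant C_1h_0^{-1/p_\ast}=:C_0$.

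The main point to check carefully is the uniformity in $L$ of the kinetic term in step (2). The periodization could make the tail contributions from far-away copies of $\phi$ accumulate. The exponential decay of $\mathcal{K}_s$ makes the sum over copies converge geometrically, and the unfolding identity above avoids double counting, so I expect no real difficulty. Note that the regularity and positivity used to identify $v_L$ come from Lemma~\ref{lem:existence-minimizer}, which is where the assumption $\alpha>\alpha_\ast$ enters; the energy comparison itself holds for all $\alpha\in(0,n)$.
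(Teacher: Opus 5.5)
Your proof is correct and follows the same route as the paper. The quotient of the constant state grows like $L^{1-1/p_\ast}$, while a periodized localized competitor has quotient bounded uniformly in $L$: you split the Gagliardo term into near- and far-diagonal parts, using the kernel asymptotics and periodicity to keep the tails from accumulating, and you bound $\mathcal{H}_L$ from below by the $j=0$ self-interaction, exactly as the paper does; the only difference is that you use a fixed bump $\phi$ supported in $(-1,1)$ instead of the paper's truncated bubble $b\eta$, which makes the $L$-independence of the profile exact and avoids controlling the cutoff's derivatives.
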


	\begin{proof}
		We construct an explicit test function whose energy remains bounded as $L \to \infty$, while the energy of the constant solution diverges by \eqref{eq:constant-energy-grows}. This comparison forces the minimizer to be nonconstant.

        Using the $L$-periodicity of any $v \in H^{s}_{L}$, we can rewrite $\mathscr{F}_{L}(v)$ in the following form
        \begin{align*}
            \mathscr{F}_{L}(v)=&\frac{\frac{\kappa_{n,s}}{2}\int_{0}^{L}\int_{0}^{L}\left(v(t)-v(\tau)\right)^{2}\mathcal{K}_{s}^{L}(t-\tau)\,\ud \tau\, \ud t+c_{n, s} \int_{0}^{L}v(t)^{2}\,\ud t}{\left(\int_{0}^{L}\int_{0}^{L} \mathcal{K}_{-\frac{\alpha}{2}}^{L}(t-\tau)\,v(\tau)^{p_{\ast}}\, v(t)^{p_{\ast}}\, \ud \tau\,\ud t\right)^{\frac{1}{p_{\ast}}}}\\
            =&\frac{\frac{\kappa_{n,s}}{2}\int_{-L/2}^{L/2}\int_{-\infty}^{+\infty}\left(v(x+\tau)-v(\tau)\right)^{2}\mathcal{K}_{s}(x)\,\ud x\, \ud \tau+c_{n, s} \int_{-L/2}^{L/2}v(t)^{2}\,\ud t}{\left(\int_{-L/2}^{L/2}\int_{-L/2}^{L/2} \mathcal{K}_{-\frac{\alpha}{2}}^{L}(t-\tau)\,v(\tau)^{p_{\ast}}\, v(t)^{p_{\ast}}\, \ud \tau\,\ud t\right)^{\frac{1}{p_{\ast}}}},
        \end{align*}
        where $x=t-\tau$.

        We define
        \begin{equation}
            b(t)=\left(\frac{e^{t}}{1+e^{2t}}\right)^{\frac{n-2s}{2}}
        \end{equation}
        and take a cut-off function $\eta \in \mathcal{C}_{c}^{\infty}(\mathbb{R})$ which is identically $1$ in the ball of radius $L/4$ and vanishes outside the ball of radius $L/2$. Let $w_{L}(t)=b(t)\eta(t)$, and let $\tilde{w}_{L}$ denote the $L$-periodic extension of $w_{L}$, which satisfies
		\begin{equation*}
			\tilde{w}_{L}(t) = w_{L}(t) \quad {\rm if} \quad t \in (-L/2,L/2); \quad \tilde{w}_{L}(t+L)=\tilde{w}_{L}(t).
		\end{equation*}
		Since $w_{L}$ is smooth and compactly supported in $(-L/2,L/2)$, its periodic extension $\tilde{w}_{L}$ belongs to $\mathcal{C}^{\infty}(\mathbb{R}) \cap H_{L}^{s}$, and in particular $\mathscr{F}_{L}(\tilde{w}_{L})$ is well-defined.
	    We will show that there exist $L_{0} > 0$ and $C > 0$, both independent of $L$, such that for all $L \geqslant L_{0}$,
        \begin{equation*}
            \mathscr{F}_{L}(\tilde{w}_L)\leqslant C.
        \end{equation*}

		We first estimate the numerator. For the $L^{2}$-term,
		\begin{equation}\label{numerator L2 term}
			\int_{-L/2}^{L/2} \tilde{w}_{L}(t)^{2}\,\ud t \leqslant \int_{-L/2}^{L/2} b(t)^{2}\,\ud t \leqslant \|b\|^{2}_{L^{2}(\mathbb{R})}.
		\end{equation}
		For the Gagliardo seminorm, we fix a small constant $\varepsilon > 0$, independent of $L$, such that the asymptotic behaviors of $\mathcal{K}_{s}(x)$ near zero and at infinity hold in the regions $|x| \leqslant \varepsilon$ and $|x| > \varepsilon$, respectively. We split the integral into the following two parts.
		\begin{align*}
			\int_{-L/2}^{L/2}\int_{-\infty}^{+\infty} \left(\tilde{w}_{L}(x+\tau) - \tilde{w}_{L}(\tau)\right)^{2} \mathcal{K}_{s}(x)\,\ud x\,\ud \tau 
            =:&I_{1}+I_{2},
		\end{align*}
        where
        \begin{equation*}
            I_{1}=\int_{-L/2}^{L/2}\int_{-\varepsilon}^{\varepsilon} \left(\tilde{w}_{L}(x+\tau) - \tilde{w}_{L}(\tau)\right)^{2} \mathcal{K}_{s}(x)\,\ud x\,\ud \tau 
        \end{equation*}
        and
        \begin{equation*}
            I_{2}=\int_{-L/2}^{L/2}\int_{\mathbb{R}\setminus [-\varepsilon,\varepsilon]} \left(\tilde{w}_{L}(x+\tau) - \tilde{w}_{L}(\tau)\right)^{2} \mathcal{K}_{s}(x)\,\ud x\,\ud \tau.
        \end{equation*}
        For $I_{1}$, we use the Taylor expansion of $\tilde{w}_L$. More precisely, for $|x|\leqslant \varepsilon$, we have
        \begin{equation*}
            \tilde{w}_L(\tau+x) - \tilde{w}_L(\tau)= x\,\tilde{w}_L'(\tau) + O(x^2),
        \end{equation*}
        where the remainder is uniform in $\tau$ and $L$, since $\|\tilde{w}_L''\|_{L^\infty} \leqslant \|b''\|_{L^\infty} + C\|b'\|_{L^\infty}/L + C\|b\|_{L^\infty}/L^2$, which is bounded independently of $L$. Hence, combined with Lemma~\ref{Pro of ker}, we obtain that
        \begin{align}\label{numerator I1}\nonumber
            I_1 \lesssim \int_{-L/2}^{L/2} \int_{- \varepsilon}^{\varepsilon}\frac{x^2 |\tilde{w}_L'(\tau)|^2}{|x|^{1+2s}}\, \ud x\, \ud\tau&\leqslant 2\int_{-L/2}^{L/2}\left( \eta '(\tau)^{2}b(\tau)^{2}+\eta(\tau)^{2}b'(\tau)^{2}\right)\,\ud\tau\int_{-\varepsilon}^{\varepsilon} |x|^{1-2s}\,\ud x,\\
                &\leqslant\frac{C}{1-s}\varepsilon^{2-2s}\left(\frac{1}{L^{2}}\|b\|^{2}_{L^{2}(\mathbb{R})}+ \|b'\|^{2}_{L^{2}(\mathbb{R})}\right),
        \end{align}
		where the constant $C>0$ is independent of $L$. For $I_{2}$, using the exponential decay of $\mathcal{K}_{s}$, we have
        \begin{align}\label{numerator I2}\nonumber
            I_{2}&\leqslant C \int_{-L/2}^{L/2} \int_{\mathbb{R}\setminus[-\varepsilon,\varepsilon]}\left(\tilde{w}_L(x+\tau) - \tilde{w}_L(\tau)\right)^2e^{-|x|\frac{n+2s}{2}}\, \ud x\, \ud\tau \\ \nonumber
            &\leqslant C\int_{\mathbb{R}\setminus[-\varepsilon,\varepsilon]}e^{-|x|\frac{n+2s}{2}}
            \left(\int_{-L/2}^{L/2} \tilde{w}_L(x+\tau)^2\, \ud\tau \right)\,\ud x+\left(\int_{-L/2}^{L/2} \tilde{w}_L(\tau)^2\, \ud\tau \right)\left(\int_{\mathbb{R}\setminus[-\varepsilon,\varepsilon]}e^{-|x|\frac{n+2s}{2}}\, \ud x \right)\\
            &\leqslant C e^{-\frac{n+2s}{2}\varepsilon} \|b\|_{L^{2}(\mathbb{R})}^{2},
        \end{align}
        where the constant $C>0$ is independent of $L$.

		Now we consider the denominator. For $L \geqslant 2$, the Hartree energy is bounded below by the self-interaction within a single period. Indeed, one has
		\begin{align}\label{denominator-lower}
			\mathcal{H}_{L}(\tilde{w}_{L}) &= \int_{-L/2}^{L/2}\int_{-L/2}^{L/2} \mathcal{K}_{-\frac{\alpha}{2}}^{L}(t-\tau)\, \tilde{w}_{L}(\tau)^{p_{\ast}}\, \tilde{w}_{L}(t)^{p_{\ast}}\, \ud\tau\,\ud t \nonumber \\\nonumber
			&\geqslant \int_{-1}^{1}\int_{-1}^{1} \mathcal{K}_{-\frac{\alpha}{2}}(t-\tau)\, b(\tau)^{p_{\ast}}\, b(t)^{p_{\ast}}\, \ud\tau\,\ud t  \\
            & \geqslant b(1)^{2p_{\ast}} \int_{-1}^{1}\int_{-1}^{1}\mathcal{K}_{-\frac{\alpha}{2}}(t-\tau)\,\ud \tau\,\ud t.
		\end{align}
        
        Combining \eqref{numerator L2 term}, \eqref{numerator I1}, \eqref{numerator I2} and \eqref{denominator-lower}, we obtain
		\begin{equation} \label{FL tildewL}
			\mathscr{F}_{L}(\tilde{w}_{L}) = \frac{\mathcal{E}_{L}(\tilde{w}_{L})}{\mathcal{H}_{L}(\tilde{w}_{L})^{1/p_{\ast}}} \leqslant \frac{C\left(\|b\|^{2}_{L^{2}(\mathbb{R})}+ \|b'\|^{2}_{L^{2}(\mathbb{R})}
            \right)}{|b(1)|^{2} \left(\int_{-1}^{1}\int_{-1}^{1}\mathcal{K}_{-\frac{\alpha}{2}}(t-\tau)\,\ud \tau\,\ud t \right)^{1/p_{\ast}}},
		\end{equation}
        where the constant $C$ depends only on $n,s$ and $\varepsilon$ (and is independent of $L$ for $L \geqslant 2$). Since $b(t) = (2\cosh t)^{-(n-2s)/2}$ decays as $e^{-(n-2s)|t|/2}$ with $n > 2s$, both $b$ and $b'$ belong to $L^{2}(\mathbb{R})$.

		\medskip
		We now conclude the proof. Since the right-hand side of \eqref{FL tildewL} is independent of $L$ while $\mathscr{F}_{L}(\mathbf{c}_{L}) \to \infty$ by \eqref{eq:constant-energy-grows}, there exists $L_{0} > 0$ such that for all $L \geqslant L_{0}$ one has
		\begin{align*}
			c(L) \leqslant \mathscr{F}_{L}(\tilde{w}_{L}) 
            \leqslant \frac{C\left(\|b\|^{2}_{L^{2}(\mathbb{R})}+ \|b'\|^{2}_{L^{2}(\mathbb{R})}
            \right)}{|b(1)|^{2} \left(\int_{-1}^{1}\int_{-1}^{1}\mathcal{K}_{-\frac{\alpha}{2}}(t-\tau)\,\ud \tau\,\ud t \right)^{1/p_{\ast}}}&< c_{n,s}\, L^{1-\frac{1}{p_{\ast}}} \left( \int_{-\infty}^{+\infty} \mathcal{K}_{-\frac{\alpha}{2}}(\tau)\,\ud\tau \right)^{-\frac{1}{p_{\ast}}}\\
            &= \mathscr{F}_{L}(\mathbf{c}_{L}).
		\end{align*}
		Therefore $v_{L} \not\equiv \mathbf{c}_{L}$, since the minimizer attains $c(L) < \mathscr{F}_{L}(\mathbf{c}_{L})$.
	\end{proof}

	\begin{remark}[Role of large periods]\label{rem:large-period}
		The energy $\mathscr{F}_{L}(w_{L})$ of the compactly supported test function stabilizes at $O(1)$ as $L \to \infty$, since its profile does not change and cross-period interactions decay exponentially. In contrast, $\mathscr{F}_{L}(\mathbf{c}_{L}) \sim L^{1-1/p_{\ast}} \to \infty$, so the localized bump beats the constant for $L \gg 1$. This is the Hartree analog of the energy comparison in~\cite{MR3694655}.
	\end{remark}

	\subsection{Proof of the main theorem}
	Finally, we prove the existence result.

	\begin{proof}[Proof of Theorem~\ref{thm:main}]
		Let $L_{0} > 0$ be as in Proposition~\ref{prop:nonconstant}, and fix $T := L \geqslant L_{0}$. By Lemma~\ref{lem:existence-minimizer}, there exists a positive, smooth, $L$-periodic solution $v_{L} \in \mathcal{C}^{\infty}(\mathbb{R})$ to \eqref{eq:periodic hartree 2}. By Proposition~\ref{prop:nonconstant}, $v_{L}$ is nonconstant.

		Let us set
		\begin{equation*}
			u_{L}(x) := |x|^{-\frac{n-2s}{2}} v_{L}(\ln|x|), \qquad x \in \mathbb{R}^{n}\setminus\{0\}.
		\end{equation*}
		By the equivalence between \eqref{eq:frac Hartree} and \eqref{eq:periodic hartree 2} established in \S\ref{sec:prelim}, $u_{L}$ is a smooth positive solution to \eqref{eq:frac Hartree} on $\mathbb{R}^{n}\setminus\{0\}$. Since $v_{L}$ is $L$-periodic and bounded between two positive constants, one has
		\begin{equation*}
			0 < c_{1} \leqslant |x|^{\frac{n-2s}{2}} u_{L}(x) \leqslant c_{2} \quad {\rm for\ all} \quad x \in \mathbb{R}^{n}\setminus\{0\}
		\end{equation*}
		for constants $c_{1}, c_{2} > 0$ depending on $L$. Moreover, the $L$-periodicity of $v_{L}$ gives the invariance 
        \[
        u_{L}(e^{L}x) = e^{-\frac{(n-2s)L}{2}} u_{L}(x).
        \]
        Finally, since $v_{L}$ is nonconstant, $u_{L}$ is indeed a Delaunay solution. This completes the proof.
	\end{proof}

    \begin{remark}
We emphasize that the Delaunay-type solutions obtained in Theorem~\ref{thm:main} satisfy all the assumptions of Theorem~\ref{symmetric}. Indeed, they are positive classical solutions of \eqref{eq:frac Hartree} in
$\mathbb{R}^n\setminus\{0\}$, belong to
$L_s(\mathbb{R}^n)\cap \mathcal{C}^{1,1}_{\loc}(\mathbb{R}^n\setminus\{0\}) \cap L^{1}_{\loc}(\mathbb{R}^{n})$,
and satisfy
\begin{equation*}
    0<c_1\leqslant |x|^{\frac{n-2s}{2}}u(x)\leqslant c_2
\end{equation*}
for some constants $c_1,c_2>0$.
\end{remark}


\section*{Acknowledgments}
This work was partially supported by Funda\c c\~ao de Amparo \`a Pesquisa do Estado de S\~ao Paulo (FAPESP), Conselho Nacional de Desenvolvimento Cient\'ifico e Tecnol\'ogico (CNPq), National Science Foundation of China (NSFC), and Natural Science Foundation of Zhejiang Province (ZJNSF). 
J.H.A. was supported by FAPESP \#2021/15567-8, and CNPq \#409764/2023-0, \#443594/2023-6, \#441922/2023-6, and \#306014/2025-4.
P.P. was supported by FAPESP \#2016/23746-6 and CNPq \#313773/2021-1. 
M.Y.\ were partially supported by NSFC \#12471114 and ZJNSF \#LZ26A010002. 



\begin{thebibliography}{10}

		\bibitem{AndradeFengPiccioneYang2025}
		J.~H. Andrade, T.~Feng, P.~Piccione and M.~Yang,
		Local asymptotics and Pohozaev identities for singular solutions of critical Hartree equations,
		preprint (2025), \href{https://arxiv.org/abs/2505.19021}{arXiv:2505.19021}.

        \bibitem{CabreSire2014}
        X. Cabr\'e and Y. Sire, Nonlinear equations for fractional Laplacians I: Regularity, maximum principles, and Hamiltonian estimates,
        \emph{Ann. Inst. H. Poincar\'e Anal. Non Lin\'eaire} \textbf{31} (2014), 23--53.

        \bibitem{Caffarelli-Gidas-Spruck}
		L.~Caffarelli, B.~Gidas, and J.~Spruck, Asymptotic symmetry and local behavior of semilinear elliptic equations with critical {S}obolev growth, Comm. Pure Appl. Math., {\bf 42} (1989), 271--297.

        \bibitem{CaffarelliJinSireXiong2014}
        L.~Caffarelli, T.~Jin, Y.~Sire, and J.~Xiong, Local analysis of solutions of fractional semi-linear elliptic equations with isolated singularities,
        \emph{Arch. Ration. Mech. Anal.}
        \textbf{213} (2014), no.~1, 245--268.

		\bibitem{CaffarelliSilvestre2007}
		L.~Caffarelli and L.~Silvestre,
		An extension problem related to the fractional Laplacian,
		\emph{Comm. Partial Differential Equations}
		\textbf{32} (2007), no.~7--9, 1245--1260.

		\bibitem{CaseChang16}
		J.~S. Case and S.-Y.~A. Chang,
		On fractional GJMS operators,
		\emph{Commun. Pure Appl. Math.}
		\textbf{69} (2016), no.~6, 1017--1061.

		\bibitem{CG11}
		S.-Y.~A. Chang and M.~d.~M. Gonz\'alez,
		Fractional Laplacian in conformal geometry,
		\emph{Adv. Math.}
		\textbf{226} (2011), no.~2, 1410--1432.

        \bibitem{Chang-Yang} 
		S. Y. A. Chang, and P. C. Yang, On uniqueness of solutions of n-th order differential equations in conformal geometry, Math. Res. Lett., {\bf 4} (1997), 91--102.

        \bibitem{Chen-Li1}  
		W.~Chen, and C.~Li, Classification of solutions of some nonlinear elliptic equations, Duke Math. J., {\bf 63} (1991), 615--622.
		
		\bibitem{Chen-Li2} 
		W. Chen, and C. Li, A necessary and sufficient condition for the Nirenberg problem, Comm. Pure Appl. Math., {\bf 48} (1995),  no. 6, 657--667.

        \bibitem{MR2200258}
		W.~Chen, C.~Li and B.~Ou, Classification of solutions for an integral equation,
		\emph{Comm. Pure Appl. Math.} {\bf 59} (2006) 330--343.

        \bibitem{Chou-Wan}
		K. S. Chou, and Y. H. Wan, Asymptotic radial symmetry for solutions of $-\Delta u + e^u = 0$ in a punctured disc, Pacific J. Math., {\bf 163} (1994), no. 2, 269--276.

        \bibitem{ChenLiOu2005}
        W.~Chen, C.~Li, and B.~Ou,
        Qualitative properties of solutions for an integral equation,
        \emph{Discrete Contin. Dyn. Syst.}
        \textbf{12} (2005), no.~2, 347--354.

        \bibitem{ChenLiMa}
        W.~Chen, Y.~Li and P.~Ma, \emph{The Fractional Laplacian}, World Scientific Publishing Co. Pte. Ltd., Singapore, 2019.

        \bibitem{DaiFangQin2018}
        W.~Dai, Y.~Fang and G.~Qin, Classification of positive solutions to fractional order {H}artree equations via a direct method of moving planes, \emph{J. Differential Equations} {\bf 265} (2018), 2044--2063.

        \bibitem{DaiHuangQinWangFang2019}
        W.~Dai, J.~Huang, Y.~Qin, B.~Wang and Y.~Fang, Regularity and classification of solutions to static {H}artree equations involving fractional {L}aplacians, \emph{Discrete Contin. Dyn. Syst.} {\bf 39} (2019), 1389--1403.


		\bibitem{DelaTorreGonzalez18}
		A.~DelaTorre and M.~del~Mar Gonz\'{a}lez,
		Isolated singularities for a semilinear equation
		for the fractional Laplacian arising in conformal geometry,
		\emph{Rev. Mat. Iberoam.}
		{\bf 34} (2018), 1645--1678.

		\bibitem{MR3694655}
		A.~DelaTorre, M.~del Pino, M.~d.~M. Gonz\'{a}lez and J.~Wei, Delaunay-type
		singular solutions for the fractional {Y}amabe problem, \emph{Math. Ann.}
		{\bf 369} (2017), 597--626.

		\bibitem{DiCastroKuusiPalatucci2014}
		A.~Di Castro, T.~Kuusi and G.~Palatucci,
		Nonlocal Harnack inequalities,
		\emph{J. Funct. Anal.} {\bf 267} (2014), no.~6, 1807--1836.

		\bibitem{DiNezza2012}
		E.~Di Nezza, G.~Palatucci and E.~Valdinoci,
		Hitchhiker's guide to the fractional Sobolev spaces,
		\emph{Bull. Sci. Math.} {\bf 136} (2012), no.~5, 521--573.

		\bibitem{DongKim2013}
		H.~Dong and D.~Kim,
		Schauder estimates for a class of non-local elliptic equations,
		\emph{Discrete Contin. Dyn. Syst.} {\bf 33} (2013), 2319--2347.

        \bibitem{MR4027015}
		L.~Du and M.~Yang, Uniqueness and nondegeneracy of solutions for a critical
		nonlocal equation, \emph{Discrete Contin. Dyn. Syst.} {\bf 39} (2019)
		5847--5866.

		\bibitem{FabesKenigSerapioni1982}
		E.~B. Fabes, C.~E. Kenig and R.~P. Serapioni,
		The local regularity of solutions of degenerate elliptic equations,
		\emph{Comm. Partial Differential Equations} {\bf 7} (1982), no.~1, 77--116.

        \bibitem{Feng-Yang-Zhou}
        T. Feng, M. Yang, and X. Zhou,
        Asymptotic behavior of solutions to a planar Hartree equation with isolated singularities, preprint, arXiv:2602.03559.

        \bibitem{FrankKonig2019}
		R.~L. Frank and T.~K\"{o}nig, Classification of positive singular solutions to
		a nonlinear biharmonic equation with critical exponent, \emph{Anal. PDE} {\bf 12} (2019) 1101--1113.

		\bibitem{FrankLenzmannSilvestre2016}
		R.~L. Frank, E.~Lenzmann and L.~Silvestre,
		Uniqueness of radial solutions for the fractional Laplacian,
		\emph{Comm. Pure Appl. Math.} {\bf 69} (2016), no.~9, 1671--1726.

        \bibitem{MR3817173}
		F.~Gao and M.~Yang, The {B}rezis-{N}irenberg type critical problem for the
		nonlinear {C}hoquard equation, \emph{Sci. China Math.} {\bf 61} (2018)
		1219--1242.

        \bibitem{Giusti2003}
        E.~Giusti, \emph{Direct Methods in the Calculus of Variations},
        World Scientific, River Edge, NJ, 2003.

        \bibitem{Gluck}
		M. Gluck, Classification of solutions to an elliptic equation on $\mathbb{R}^{2}$ with nonlocal nonlinearity, Discrete and
		Continuous Dynamical Systems, {\bf 45} (2025), 4262--4299.

		\bibitem{MR3148060}
		M.~d.~M. Gonz\'{a}lez and J.~Qing,
		Fractional conformal Laplacians and fractional Yamabe problems,
		\emph{Anal.\ PDE} \textbf{6} (2013), 1535--1576.

        \bibitem{MR3978520}
		L.~Guo, T.~Hu, S.~Peng and W.~Shuai, Existence and uniqueness of solutions for
		{C}hoquard equation involving {H}ardy-{L}ittlewood-{S}obolev critical
		exponent, \emph{Calc. Var. Partial Differential Equations} {\bf 58} (2019)
		Paper No. 128, 34.

        \bibitem{Guo-Peng}
		Y. Guo, and S. Peng, Asymptotic behavior and classification of solutions to Hartree type equations with exponential nonlinearity, J. Geom. Anal., {\bf 34} (2024), no. 1, Paper No. 23, 21 pp.

        \bibitem{GuoHuangWangWei2020}
		Z.~Guo, X.~Huang, L.~Wang and J.~Wei, On Delaunay solutions of a biharmonic
		elliptic equation with critical exponent, \emph{J. Anal. Math.} {\bf 140}
		(2020) 371--394.

        \bibitem{Guo-Liu}
		Z. Guo, and Z. Liu, Asymptotic behavior of solutions for some elliptic equations in exterior Domains, Pacific J. Math., {\bf 309} (2020), no. 2, 333--352.

		\bibitem{MR1544927}
		G.~H. Hardy and J.~E. Littlewood, Some properties of fractional integrals. {I},
		\emph{Math. Z.} {\bf 27} (1928), 565--606.

        \bibitem{JinLiXiong2014}
        T. Jin, Y.Y. Li and J. Xiong, On a fractional Nirenberg problem, part~I: blow up analysis and compactness of solutions,
        \emph{J. Eur. Math. Soc. (JEMS)} {\bf 16} (2014), 1111--1171.

		\bibitem{JinXiong2020}
		T.~Jin and J.~Xiong, Asymptotic symmetry and local behavior of solutions of
		higher order conformally invariant equations with isolated singularities,
		\emph{Ann. Inst. H. Poincar\'{e} C, Anal. Non Lin\'{e}aire} {\bf 38} (2021), no.~4, 1167--1216.

        

		\bibitem{Kassmann2009}
		M.~Kassmann,
		A priori estimates for integro-differential operators with measurable kernels,
		\emph{Calc. Var. Partial Differential Equations} {\bf 34} (2009), 1--21.



		\bibitem{MR717827}
		E.~H. Lieb, Sharp constants in the {H}ardy--{L}ittlewood--{S}obolev and related
		inequalities, \emph{Ann. of Math. (2)} {\bf 118} (1983), 349--374.

        \bibitem{Li2004}
        Y.~Y. Li, Remark on some conformally invariant integral equations: the method of moving spheres, \emph{J. Eur. Math. Soc.} {\bf 6} (2004), 153--180.

        \bibitem{MR1611691}
		C.-S. Lin, A classification of solutions of a conformally invariant fourth
		order equation in {${\bf R}^n$}, \emph{Comment. Math. Helv.} {\bf 73} (1998)
		206--231.

		\bibitem{Lions1985a}
		P.-L. Lions,
		The concentration-compactness principle in the calculus of variations. The locally compact case, part~1,
		\emph{Ann. Inst. H. Poincar\'{e} C, Anal. Non Lin\'{e}aire} {\bf 1} (1984), no.~2, 109--145.

		\bibitem{Lions1985b}
		P.-L. Lions,
		The concentration-compactness principle in the calculus of variations. The locally compact case, part~2,
		\emph{Ann. Inst. H. Poincar\'{e} C, Anal. Non Lin\'{e}aire} {\bf 1} (1984), no.~4, 223--283.

        \bibitem{Niu}
		Y. Niu, Classification of solutions of higher order critical Choquard equation, Commun. Pure Appl. Anal., {\bf24} (2025), no. 5, 812--839.

		\bibitem{Silvestre2006}
		L.~Silvestre,
		H\"older estimates for solutions of integro-differential equations like the fractional Laplace,
		\emph{Indiana Univ. Math. J.} {\bf 55} (2006), 1155--1174.

        \bibitem{Silvestre2007}
        L.~Silvestre,
        Regularity of the obstacle problem for a fractional power of the Laplace operator,
        \emph{Comm. Pure Appl. Math.} {\bf 60} (2007), 67--112.

		\bibitem{MR165337}
		S.~L. Sobolev, \emph{Applications of functional analysis in mathematical
			physics}, Translations of Mathematical Monographs, Vol.~7, American
		Mathematical Society, Providence, RI (1963), translated from the Russian by
		F.~E. Browder.


        \bibitem{WeiXu1999}
        J.~Wei and X.~Xu,
        Classification of solutions of higher order conformally invariant equation,
        \emph{Math. Ann.} {\bf 313} (1999), 207--228.

        \bibitem{Yang-Yang}
        H. Yang and R. Yang, On isolated singularities of the conformal Gaussian curvature equation and $Q$-curvature equation, Math. Ann., {\bf 394} (2026), Paper No. 97.


	\end{thebibliography}
\end{document}